\newcounter{paranum}
\newcommand{\St}{{\mathrm{Stop}}}
\newtheorem{intro}{Theorem}
\newtheorem{pintro}[intro]{Proposition}
\newtheorem{theorem}{Theorem}[section]
\newtheorem{lemma}[theorem]{Lemma}
\newtheorem{cy}[theorem]{Corollary}
\newtheorem{prop}[theorem]{Proposition}
\theoremstyle{definition}
\newtheorem{df}[theorem]{Definition}
\newtheorem{ex}[theorem]{Example}
\newtheorem{rk}[theorem]{Remark}
\newcommand{\Z}{\mathbb Z}
\newcommand{\A}{\mathcal{A}}
\newcommand{\Comm}{\mathrm{Comm}\:}
\newcommand{\ZZ}{{\mathbb Z}}
\newcommand{\QQ}{{\mathbb Q}}
\newcommand{\vp}{{\varphi}}
\newcommand{\ld}{{\ldots}}
\newcommand{\ch}{\mathrm{char}}
\newcommand{\GL}{\mathrm{GL}}
\newcommand{\cL}{\mathcal{L}}
\newcommand{\cG}{\mathcal{G}}
\newcommand{\ad}{\mathrm{ad}}
\newcommand{\F}{{\mathbb F}}
\newcommand{\Q}{{\mathbb Q}}
\renewcommand{\Im}[1]{\mathrm{Im}\left( #1 \right)}
\newcommand{\chr}[1]{\mathrm{char}\left( #1 \right)}
\newcommand{\Aut}[1]{\mathrm{Aut}(#1)}
\newcommand{\cF}{\mathcal{F}}
\begin{document}
\title[Nilpotent algebras and quasigoups]{Nilpotent algebras, implicit function theorem, and polynomial quasigroups}
\author{Yuri Bahturin$^1$}\address{Department of Mathematics and Statistics\\Memorial University of Newfoundland\\St. John's, NL, A1C5S7} \author{Alexander Olshanskii$^2$}\address{Department of Mathematics\\1326 Stevenson Center\\Vanderbilt University\\Nashville, TN 37240} 
 \keywords{Equations in algebras, nilpotent algebras, quasigroups, Baker-Campbell-Hausdorf formula, commensurators, filiform Lie algebras, ordered algebras}
\subjclass[2020]{17B30, 	20N05, 06F25,17B01, 22E60}
\footnote{Supported by NSERC Discovery grant \# 227060-19}
\footnote{Supported by NSF grant DMS 1161294 and RFFR grant 11-01-00945 and Moscow Center of of Fundamental and Applied Mathematics, grant 075-15-2022-284}
\begin{abstract} We study finite-dimensional nonassociative algebras. We prove the implicit function theorem for such algebras. This allows us to establish a correspondence between such algebras and quasigroups, in the spirit of classical correspondence between divisible torsion-free nilpotent groups and rational nilpotent Lie algebras. We study the related questions of the commensurators of nilpotent groups, filiform Lie algebras of maximal solvability length and partially ordered algebras.
 \end{abstract}

 \maketitle

\tableofcontents

\section*{Introduction}\label{sINT} We consider nilpotent
algebras over fields. The algebras need not be associative or Lie. 

In the first part of the paper (sections \ref{part1}, \ref{sRA}), we observe that a finite system of equations over an algebra A has well defined Jacobian matrix $J$. If A is finite-dimensional and $J$ has maximal rank, then the implicit function theorem holds. The resulting polynomial function is  defined everywhere, not only in the neighborhood of zero.

\medskip

Polynomial functions define new operations on $A$, which we call (derived) \textit{polynomial operations}. Some classical examples of such
derived operations, turning $A$ into a group, include so called ``circle product'' $a\circ b = a+b+ab$, if $A$ is associative, and the product given by the Baker - Campbell - Hausdorff formula, if $A$ is a Lie algebra.

In the case where $A$ is a more general nonassociative algebra, any polynomial operation $\circ$ turns $A$ into a quasigroup or a loop. One of our main results in Part I of this paper, see Theorem \ref{pDerived}, states that  the original operations of addition and multiplication in $A$ can be, in a sense, restored from the derived operation $\circ$. So the results of Part I can be viewed as a far reaching generalization of the classical correspondence between torsion-free nilpotent groups and nilpotent Lie algebras. 

In the second part, dealing with the most popular case of the above connection, we discuss in depth three topics concerning the classical Malcev 
correspondence between rational nilpotent Lie algebras and divisible torsion-free nilpotent groups.

Firstly, we apply this connection to the computation of the group of \textit{commensurators} in finitely generated nilpotent groups. 
 
 Secondly, we look at the groups of polynomial mappings of nilpotent power-associative algebras. Their corresponding Lie algebras provide examples of nilpotent filiform Lie algebras of interest in Differential Geometry. 
 
 Lastly, in the concluding sections, we look at the the partial orderings of finite dimensional algebras. We show that in the case of nilpotent Lie algebras, they are closely connected to the orderings on their corresponding divisible torsion-free nilpotent groups. In the case of  general ordered algebras, we introduce a new invariant: the rank of the maximal partial order on an algebra. For solvable Lie algebra this rank is shown to be equal to the ordinary rank, that is, the dimension of a Cartan subalgebra. 
 
 The orderings on Lie algebras have been introduced and studied  by V. Kopytov \cite{KVM}. 

\section{Equations in nilpotent algebras and polynomial quasigroups}\label{part1}

\subsection{Polynomial functions on algebras and Jacobians}\label{sF}

An arbitrary algebra $A$ is a vector space over a field $\F$ with bilinear operation $A\times A\to A$. An absolutely free algebra $\cF(X)$ is the formal linear span of all so called \textit{nonassociative monomials} (with parentheses) in the \textit{alphabet} $X$, where the product of two monomials $u$ and $v$ is the monomial $(u)(v)$. Obvious parentheses are usually omitted, so instead of $((x_1)(x_2))(x_2)$ we write $(x_1x_2)x_2$.

For a nonassociative monomial $u=u(x_1,\dots,x_n)\in\cF(X)$, the \textit{degree} is defined as the length of its associative support, that is, the word in $X$ obtained by dropping the parentheses in $u$. So,  $\deg(x)=1$ for $x \in X$, and by induction, $\deg ((u)(v)) = \deg u + \deg v$.

Given a monomial $u =u(x_1,..,x_n)\in \cF(X)$ and the elements $a_1,..., a_n\in A$, the value $u(a_1,\dots,a_n)\in A$ of $u(x_1,..,x_n)$ is defined by an obvious induction on the degree $\deg(u)$, as soon as the value of $x_i$ is set to be $a_i$, $i=1,\dots,n$. Once the values of the monomials are defined on all $a_1, a_2,\dots\in A$, the value $f(a_1,\dots,a_n)$ of any polynomial  $f(x_1,\dots,x_n)\in{\ F}\{ X\}$ is well defined. Given an algebra $A$ over $\F$ and $a_1,\dots,a_n\in A$, the map $f(x_1,\dots,x_n)\mapsto f(a_1,\dots,a_n)$ is the unique homomorphism of algebras $\overline{\vp}:\cF(x_1,\dots,x_n)\to A$ extending the set map $\vp:x_1\mapsto a_1,\dots,x_n\mapsto a_n$. If $a_1,\dots,a_n$ generate $A$, then $\overline{\vp}$ is a homomorphism onto, and $A\cong\cF(x_1,\dots, x_n)/\ker\overline{\vp}$.

If the value of every monomial of degree $c+1$ in an algebra $A$ is zero, then
$A$ is called a \textit{nilpotent algebra of class $\le c$}. The linear span of
the values of monomials of degree $i$ in $A$ is denoted by $A^i$. So the
nilpotency class of $A$ is $c$ if $A^{c+1}=\{0\}$, but $A^c\ne \{0\}$.

We will also consider polynomial functions $f: A^m\to A$ with coefficients in $A$, defined by algebraic expressions. For example, $(x,y)\mapsto  a+ bx -2y+ (yc)x +xy$, where $a,b,c\in A$, is a function in two variables.

Formally speaking, a function $f$ is an element of the free product $\cF(X)\ast A$.
The latter free product is an algebra generated by the subalgebra $A$ and the subset $X$ such that the following holds. For any algebra $B$, it is true that any homomorphism $A\to B$ and any mapping
$X\to B$ uniquely extend to a homomorphism $\cF(X)\ast A\to B$. Such free products always exists for algebras over a field (See \cite{L}.) Moreover, one can take free products
in any variety $\mathcal{M}$ of algebras containing the algebra $A$, where a variety is a class
of algebras defined by a set of laws (e.g., the variety of associative algebras over
a field $\ F$). In this case, the free factor $\cF(X)$ is a free algebra in the variety.

Every function $f$ is the sum of the monomials involving coefficients from $A$
and an element $v$ from $\cF(X)$. Such presentation is not unique, but $v$ is a uniquely defined element of $\cF(X)$, because we have $f\mapsto v$ under the homomorphism
$A\to\{0\}$, $x\mapsto x$ for all $x\in X$. If the variety $\mathcal{M}$ contains a 1-dimensional
algebra with zero multiplication, then the coefficients $\lambda_i$ of the linear part $\lambda_1 x_1+\dots+\lambda_m x_m$ of $v$ are the uniquely defined elements of the field $\F$ (no matter whether $\F$ is infinite or not). To see this, one can choose in  $\mathcal{M}$ an algebra $B$ which is a vector space over $\F$ with trivial multiplication and $\{e_1,e_2,\ld\}$ and consider a homomorphism  $A\to \{0\}$,
$x_i\mapsto e_i$, $i=1,2,\ld$.

It follows that for a family of polynomial functions $f_1,..,f_r$ of $m$ variables on a nilpotent
non-zero algebra $A$ we have a well defined $r\times m$ matrix $J=[\alpha_{ij}]$, where
the element $\alpha_{ij}\in\F$ is the coefficient at $x_j$ in the linear part
of the function $f_i$. We will call this matrix the {\it Jacobian} matrix of the
set $(f_1,\ld,f_r)$.

\begin{rk} We use the nilpotency in the definition of $J$, since every nonzero
nilpotent algebra contains a 1-dimensional subalgebra, with zero multiplication. Otherwise, one
can meet impediments. For example, in the category of commutative algebras with
1, the multiplication by a nonzero scalar can coincide with the multiplication
by an element of the algebra $A$.
\end{rk}

A solution to a system
\begin{equation}\label{eSystem}
f_1(x_1,\ldots,x_m)=0,\ldots, f_r(x_1,\ldots,x_m)=0
\end{equation}
is an $m$-tuple $(x_1^0,\dots x_m^0)\in A^m$ such that $f_i(x_1^0,\dots,x_m^0)=0$
for $i=1,\ld, r$, i.e. every $f_i$ maps to zero under the homomorphism
$\cF(X)\ast A\to A$, which is identical on $A$ and substitutes $x_1\mapsto x_1^0,\dots x_m\mapsto x_m^0$.
\subsection{Nonsingular square systems of equations}\label{par1} A system (\ref{eSystem}) is called ``nonsingular square'' if $r=m$ and its
Jacobian matrix $J$ is nonsingular.

The following Proposition \ref{pNSQ} and its proof mimic A. L. Shmelkin's result in \cite{SHM} about the equations in the nilpotent groups. See \S\ref{par1}.

\begin{pintro}\label{pNSQ}
Every nonsingular square system of polynomial equations with coefficients in a nilpotent algebra $A$ has a unique solution in $A$.
\end{pintro}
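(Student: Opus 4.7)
The plan is to reduce the system to a fixed-point equation and then solve it by iteration, using the nilpotent filtration on $A$ as a substitute for a contraction principle. Since $J$ is invertible over $\F$, I would premultiply the system \eqref{eSystem} by $J^{-1}$, obtaining an equivalent system of the form $x_i=H_i(x_1,\dots,x_m)$ for $i=1,\dots,m$, where each $H_i\in\cF(X)\ast A$ decomposes as a constant $c_i\in A$ plus a sum of nonassociative monomials, every one of which has total degree at least $2$ (counting coefficient letters from $A$ together with variable letters from $X$). In particular, no monomial of $H_i-c_i$ is a bare $x_j$ or a scalar multiple thereof.

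Next I would establish the key contraction estimate along the filtration $A=A^1\supseteq A^2\supseteq\cdots\supseteq A^{c+1}=\{0\}$: if $x,y\in A^m$ satisfy $x_j-y_j\in A^k$ for every $j$ and some $k\ge 1$, then $H_i(x)-H_i(y)\in A^{k+1}$ for every $i$. This is a multilinear expansion. For each monomial $\mu$ of total degree $d\ge 2$ occurring in $H_i$, write $x_j=y_j+\delta_j$ with $\delta_j\in A^k$ and expand $\mu$ multilinearly: the piece $\mu(y)$ cancels, and every remaining term has at least one $\delta$-factor lying in $A^k$ together with $d-1\ge 1$ further factors in $A^1$. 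The relation $A^p\cdot A^q\subseteq A^{p+q}$ then places each such term in $A^{k+d-1}\subseteq A^{k+1}$.

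Given this estimate, existence and uniqueness both fall out mechanically. Set $x^{(0)}=0$ and $x^{(n+1)}=H(x^{(n)})$ componentwise. Induction on $n$ using the contraction yields $x^{(n+1)}-x^{(n)}\in(A^{n+1})^m$, so the sequence stabilizes once $n=c$, and the terminal value $x^{(c)}$ is a solution. For uniqueness, if $x$ and $y$ are both solutions then $x-y=H(x)-H(y)$, and the contraction bootstraps $x-y\in(A^k)^m$ to $(A^{k+1})^m$; iterating from the trivial base case $k=1$ forces $x-y\in(A^{c+1})^m=\{0\}$.

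The main hurdle is the bookkeeping in the multilinear expansion: one must treat coefficients from $A$ as additional letters of filtration weight $1$, so that even a monomial linear in a single $x$-variable, such as $ax_j$, still has total degree $2$ and thus falls within the regime $d\ge 2$ where the uniform bound $A^{k+d-1}\subseteq A^{k+1}$ applies. Once this convention is fixed, the argument is insensitive to associativity, commutativity, or the particular variety $\mathcal{M}$ containing $A$, mirroring Shmelkin's passage to a fixed-point iteration in the group setting.
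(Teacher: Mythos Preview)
Your proof is correct. Both you and the paper exploit the filtration $A=A^{1}\supseteq A^{2}\supseteq\cdots\supseteq A^{c+1}=\{0\}$, but the packaging differs. The paper argues by induction on the nilpotency class $c$: it solves the system modulo $A^{c}$ by the inductive hypothesis, then corrects the lift by solving an auxiliary linear system with right-hand side in $A^{c}$, using that elements of $A^{c}$ annihilate everything; uniqueness is handled by a parallel induction. You instead premultiply once by $J^{-1}$ to put the system in fixed-point form $x=H(x)$ and iterate from $0$, with your contraction estimate $H_i(x)-H_i(y)\in A^{k+1}$ whenever $x_j-y_j\in A^{k}$ doing the work of the paper's ``annihilating'' observation. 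The two arguments are equivalent in substance (your iterate $x^{(n)}$ is precisely a solution modulo $A^{n+1}$), but your version folds the linear correction into the single premultiplication by $J^{-1}$ and so avoids re-invoking it at each level, while the paper's version keeps the role of the invertible Jacobian explicitly visible at every step, closer in spirit to Shmelkin's original group-theoretic argument.
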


\begin{proof}
Let $c$ denote the nilpotency class of $A$. If $c=1$, i.e. $A$ is equipped with zero
multiplication, then we have a nonsingular  square system of linear equations
\begin{equation}\label{eLinSystem}
\left\{\begin{array}{rcl}
\alpha_{11}x_1+\cdots+\alpha_{1m}x_m=u_1\\
\cdots\cdots\cdots\cdots\cdots\cdots\cdots\cdots\\
\alpha_{r1}x_1+\cdots +\alpha_{rm}x_m=u_m,
\end{array}
\right. 
\end{equation}
where $u_1,\dots,u_m$ are vectors from $A$ and $\alpha_{ij}\in\F$.
Since the unknowns $x_1,\dots,x_m$  are also vectors from $A$, the system (\ref{eLinSystem}) splits into $d$ systems ($d=\dim A\le\infty$) of ordinary  numerical systems each with the same nonsingular matrix of coefficients $[\alpha_{ij}]$. Thus, there is a (unique) solution to (\ref{eLinSystem}), hence to (\ref{eSystem}), if $c=1$.

If $c>1$, then by induction, there is a unique solution of (\ref{eSystem}) modulo $A^c$. Let $(x_1^0,\dots, x_m^0)$ be a preimage of the solution $\!\!\!\mod A^{(c)}$. Then $f_i(x_1^0,\dots, x_m^0)=c_i$, where $c_i\in A^c$, $i=1,\dots,r$.

Let $(y_1^0,\dots,y_m^0)$ be the solution in $A^c$ to the following system with write-hand sides in $A^c$:
\[\left\{
\begin{array}{rcl}
\alpha_{11}x_1+\dots +\alpha_{1m}x_m&=c_1\\
\dots\dots\dots\dots\dots\dots\dots\\
\alpha_{m1}x_1+\dots +\alpha_{sm}x_m&=c_m
\end{array}
\right. \]
Then $(x_1^0-y_1^0,\dots,x_m^0-y_m^0)$ will be a solution to (\ref{eSystem}). Indeed, since all $y_i^0\in A^c$ are annihilating, this follows because the substitution $x_i^0\mapsto x_i^0-y_i^0$, $i=1,\dots,m,$ only changes the linear part of each equation.

As for the uniqueness, it follows by a similar inductive argument. Assume there is uniqueness modulo $A^c$. Thus, the difference of two solutions consists of annihilating vectors. In this case, the difference satisfies a homogeneous system with nonsingular matrix $[\alpha_{ij}]$. It follows that the difference equals zero.
\end{proof}

\subsection{Implicit function theorem for nilpotent algebras}\label{ssIFT}

Now we consider a system of $r$ polynomial equations in $m$ variables
(\ref{eSystem}) under the assumption that $r\le m$ and that the Jacobian matrix
of (\ref{eSystem}) has rank $r$. Passing to the reduced row-echelon form of the Jacobian matrix, we may assume that the system has the form

\[\{x_i=F_i(x_1,\dots,x_m)\,|\,i=1,\dots,r\},\]
where $x_1,\dots, x_r$ are the pivot variables in the Jacobian part of the system.
It is possible that the pivot variables still enter the $F_i$'s, but only to the monomials of degree $>1$, with respect to the variables and the coefficients from $A$.

On the next step, we replace each occurrence of each pivot $x_j$ in each $F_i$, $i,j=1,\dots,r$, by $F_i(x_1,\dots,x_m)$. As a result, on the right side of the $i$th equation we will obtain a new polynomial $G_i(x_1,\dots,x_m)$, where the pivot $x_j$ enters only the monomials of degree $>2$. The left hand sides remain intact:
\[
\{x_i=G_i(x_1,\dots,x_m)\,|\,i=1,\dots,r\}
\]
Clearly, every solution of the first system will be a solution of the second one. The Jacobi matrix does not change. Repeating the same argument $c-1$ times, we obtain a system

\begin{equation}\label{eHi}
\{x_i=H_i(x_1,\dots,x_m)\,|\,i=1,\dots,r\},
\end{equation}
where the right hand sides do not have terms containing pivot variables, except for the terms of degree $c+1$.

As before, every solution of the original system is the solution of the latter one. By Proposition \ref{pNSQ}, both systems have a unique solution if the values of the free variables are fixed. Thus the solutions of the latter systems will be the solutions of the original system, meaning that the systems are equivalent. Therefore the following is true.

\begin{theorem}\label{tAG} If $A$ is a nilpotent algebra and the Jacobian matrix of the system (\ref{eSystem}) has maximal rank $r$, then one can express the solutions as polynomial functions of $m-r$ free variables defined on the entire algebra $A$.

 Moreover, if $A$ has finite dimension $d$, then the set of solutions of this system  is an affine space of dimension $d(m-r)$.
\end{theorem}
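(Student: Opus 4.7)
The plan is to convert the reduction argument sketched in the paragraphs preceding the theorem into a formal proof. After Gaussian elimination on the Jacobian and relabeling so that $x_1,\dots,x_r$ are the pivots, the system takes the form $\{x_i = F_i(x_1,\dots,x_m) \mid i=1,\dots,r\}$ in which every pivot variable appears on the right only inside monomials of degree (counted with coefficients from $A$) at least $2$. I would iterate the simultaneous substitution $x_j \mapsto F_j$ on the right-hand sides: one pass replaces each pivot occurrence (degree $1$) by a sum in which at least one factor contributing a pivot has degree $\geq 2$ while the remaining substituted factors contribute degree $\geq 1$, so every pivot-containing monomial in the new right-hand side has degree at least one greater than before. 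After $c-1$ such passes the minimum degree of pivot-containing monomials exceeds $c$, and by nilpotency of class $c$ all these terms evaluate to zero on $A$. Discarding them yields explicit polynomial formulas
\[x_i = \widetilde H_i(x_{r+1},\dots,x_m), \qquad i=1,\dots,r,\]
depending only on the free variables.

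Next I would verify that this reduced system has exactly the same solutions as the original. In one direction, row reduction is invertible, and at any tuple satisfying $x_j = F_j$ the substitution $x_j \mapsto F_j$ changes no values, so every solution of (\ref{eSystem}) satisfies the reduced system. For the converse, fix any $(x_{r+1}^0,\dots,x_m^0) \in A^{m-r}$ and specialize the original system in these free variables; what remains is a square system in $x_1,\dots,x_r$ whose Jacobian is the identity on the pivot block, hence nonsingular. By Proposition \ref{pNSQ} it has a unique solution. The tuple produced by the reduced formulas is visibly such a solution, and therefore is \emph{the} solution of the original specialized system. Thus the two systems share the same solution set, which is parameterized by the polynomial expressions $\widetilde H_i$ evaluated on all of $A^{m-r}$.

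For the dimensional claim, assume $\dim_{\F} A = d$. The map
\[\Phi \colon A^{m-r} \to A^m, \quad (x_{r+1},\dots,x_m) \mapsto \bigl(\widetilde H_1,\dots,\widetilde H_r,\,x_{r+1},\dots,x_m\bigr)\]
is a polynomial bijection onto the solution set, with polynomial inverse given by projection onto the last $m-r$ coordinates. Since $A^{m-r}$ is an $\F$-affine space of dimension $d(m-r)$, the solution set inherits this structure and has the asserted dimension. The main technical point, which I would treat carefully, is the degree bookkeeping in the substitution step: confirming that the minimum degree of pivot-containing monomials strictly increases with each pass, so that $c-1$ passes suffice for nilpotency to eliminate the pivot dependence entirely.
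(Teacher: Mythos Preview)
Your proof is correct and follows essentially the same route as the paper: the iterative substitution driving up the degree of pivot-containing monomials, the appeal to Proposition \ref{pNSQ} to establish equivalence of the original and reduced systems for each specialization of the free variables, and the identification of the solution set as the graph of a polynomial map on $A^{m-r}$. The only cosmetic difference is that you explicitly discard the high-degree terms to write $\widetilde H_i(x_{r+1},\dots,x_m)$, whereas the paper leaves them in $H_i(x_1,\dots,x_m)$ with the understanding that they vanish on $A$.
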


\begin{proof} The values of the right-hand sides of (\ref{eHi}) depend on the choice
of values for free variables in the $(m-r)$th direct power of $A$. So the set of solutions form a graph of a polynomial mapping defined on this power, which has
dimension $d(m-r)$ over the ground field $\F$.  The theorem is proved.
\end{proof}

The set of solutions can be found in an effective way, by subsequently finding solutions $\mod A^2, \mod A^3$, and so on, for non-singular square systems
or by iterative computation of functions $H_i$ (see (\ref{eHi})) if the Jacobian matrix has maximal rank.

\subsection{What if $A$ is not nilpotent?}

Being nilpotent algebra in Theorem \ref{tAG} is essential, as shown by the following. Recall that the \textit{multiplication algebra} $M(A)$ of an algebra $A$ is an (associative) subalgebra of the algebra of linear operators on the vector space $A$ generated by the left and right multiplications $x\mapsto ax$, $x\mapsto xa$, where $a,x\in A$.

\begin{theorem}\label{tEqnon_nil} For any non-nilpotent finite-dimensional algebra $A$, there is an 1-variable equation $f(x)=0$ with  Jacobian matrix $J=[1]$ but without solutions in $A$.
\end{theorem}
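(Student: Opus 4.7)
My plan is to exploit the multiplication algebra $M(A)$: I aim to produce a nonzero idempotent operator $e \in M(A)$ together with a vector $w \in A \setminus \{0\}$ satisfying $e(w) = w$, and then set $f(x) := x - e(x) + w$. If $x_0$ were a solution, then $x_0 = e(x_0) - w$; applying $e$ and using $e^2 = e$ together with $e(w) = w$ forces $e(x_0) = e(x_0) - w$, hence $w = 0$, contradicting $w \neq 0$. The Jacobian of $f$ comes out to $[1]$ because every monomial appearing in $e(x)$ carries at least one coefficient from $A$ and therefore contributes nothing to the $v$-part in $\cF(x)$ under the homomorphism $A \to \{0\}$; only the bare $x$ term of $f$ survives.

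To manufacture such an $e$, I first observe that $M(A)$ is a non-nilpotent associative algebra. This follows from a tree-depth argument: a monomial in $A$ of degree $n$, viewed as a binary tree with $n$ leaves, has some leaf at depth at least $\lceil \log_2 n \rceil$, and peeling the tree along the path from that leaf to the root realizes the monomial as $T(y)$ for some $y \in A$ and some product $T$ of at least $\lceil \log_2 n \rceil$ operators from $\{L_a, R_b : a, b \in A\}$. Hence $M(A)^k = 0$ would force $A^n = 0$ once $n \geq 2^{k-1}+1$, contradicting the non-nilpotency of $A$. Since $M(A) \subseteq \mathrm{End}_\F(A)$ is finite-dimensional and non-nilpotent, some $T \in M(A)$ is non-nilpotent (a finite-dimensional associative nil algebra is nilpotent). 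For such $T$ the Fitting decomposition $A = A_0(T) \oplus A_1(T)$ has $A_1(T) \neq 0$ with $T|_{A_1(T)}$ invertible; using the Chinese remainder theorem in $\F[z]$ one finds a polynomial $q(z)$ with $q(0) = 0$ such that $q(T)$ acts as the identity on $A_1(T)$ and as zero on $A_0(T)$. This $e := q(T)$ lies in $M(A)$ (no constant term in $q$) and is a nonzero idempotent; choosing any $u \in A$ with $e(u) \neq 0$ and setting $w := e(u)$ gives $e(w) = w \neq 0$.

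Finally, writing $e = \sum_i \lambda_i T_i$ with each $T_i$ a nonempty product of generators $L_a, R_b$ ($a,b \in A$), the element $f(x) = x - \sum_i \lambda_i T_i(x) + w$ of $\cF(x) \ast A$ is sent to $x$ by the homomorphism killing $A$ and fixing $x$, because every $T_i(x)$ and the constant $w$ carry coefficients from $A$ that vanish under this map; hence $v = x$ and $J = [1]$, and the contradiction computation from the first paragraph closes the argument. The main obstacle, as I see it, is the non-nilpotency transfer $A \leadsto M(A)$: in the nonassociative setting this is not formal and really needs the combinatorial argument sketched above. Once that is in hand, the idempotent construction via Fitting plus CRT, and the final verification of $J = [1]$ and absence of solutions, are routine.
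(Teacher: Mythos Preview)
Your argument is correct and takes a genuinely different route from the paper. Both proofs exploit the multiplication algebra $M(A)$, but the paper proceeds via a Jordan--H\"older series of $A$ as an $M(A)$-module: it picks a chief factor $U/V$ on which $M(A)$ acts nontrivially, passes to the quotient so that $V=0$ and $A/U$ is nilpotent, and then builds an equation of the form $x-\mathcal A(ax)=v$ that has no solution in $U$; a separate argument using the nilpotency of $A/U$ is then needed to rule out solutions outside~$U$. Your approach avoids the chief series and the case split: once you have a nonzero idempotent $e\in M(A)$ (via the non-nilpotency transfer $A\leadsto M(A)$, the existence of a non-nilpotent element in the finite-dimensional associative algebra $M(A)$, and the Fitting/CRT construction with $q(0)=0$ so that $e$ really lies in $M(A)$), the single line ``apply $e$ to $x_0=e(x_0)-w$'' rules out solutions everywhere in $A$ at once. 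Your tree-depth argument for the non-nilpotency of $M(A)$ is a point the paper leaves implicit (its claim that some chief factor carries a nontrivial $M(A)$-action amounts to the same fact). The paper's method has the advantage of being closer to the module-theoretic viewpoint used elsewhere in the text; yours is more self-contained linear algebra and gives a slightly cleaner global conclusion.
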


\begin{proof}
It is sufficient to produce an equation without solutions in a homomorphic image of $A$. Let us view $A$ as a left module over its multiplication algebra  $M(A)$ and consider a chief (Jordan-H\" older) series in  $A$. Since $A$ is not nilpotent, there is a factor $U/V$ where the action of  $M(A)$ is nontrivial. Without any loss of generality, we may assume that $V=\{ 0\}$ and $A/U$ is nilpotent (could be $\{ 0\})$.

There is $u\in U$ and $a\in A$ such that either $au$ or $ua$ is nonzero. Assume $au\ne 0$. Since $U$ is a simple $M(A)$-module, there is $\A\in M(A)$ such that $u=\A(au)$. The kernel of the linear map $g:U\to U$ given by $g(x)=x-\A(ax)$ contains $u$, hence nonzero. Then $\Im g\ne U$, and there is $v\in U$ such that for no $x\in U$ we have $g(x)=v$. Thus the equation $x-\A(ax)=v$ has no solutions in $U$. Note that $\A(ax)$ is
the sum of products involving both the variable $x$ and at least one factor $a$ from $A$.
So the function $\A(ax)-v$ has zero Jacobian matrix, while the Jacobian matrix of
 the function $f(x)= x-\A(ax)-v$ is $[1]$.

Suppose now there is a solution $y\in A$ of the equation $f(x)=0$ outside of $U$. Because $A/U$ is nilpotent, it follows that there is $k$ such that $y\in (A^k+U)\setminus (A^{k+1}+U)$. At the same time, both $\A(ay)$ and $v$ belong to $A^{k+1}+U$. Thus $y\ne \A(y)+v$ and $y$ is not a solution to our equation, as well.
\end{proof}
A simple example is the following.

\begin{ex}\label{e1}
Let $L$ be a two-dimensional Lie algebra $L=\langle e,f\;|\;ef=f\rangle$.
Then the equation
\[
x+xe=f
\]
has no solutions in  $L$.
\end{ex}

\subsection{Implicit functions and quasigroups.}\label{Ifq}

Recall the most-known example of an implicit function in (nilpotent)
algebras. Let $\F$ be a field of characteristic $0$ and $A(X)$ be the free
associative algebra over $\F$ with free basis $X=\{a_1,\dots, a_r\}$. Every element
of $A(X)$ is a unique linear combination of noncommutative monomial in $X$.
The algebra $L(X)$ generated by the set $X$ with respect to the commutator
operation $(u,v)=uv-vu$ is a free Lie algebra with basis $X$ (\cite{Bou}, 3.1, Theorem
1), and so $L(c,X)=L(X)/L^{c+1}(X)$ is a free nilpotent Lie algebra of nilpotency
class $c$. The exponent $\exp x=\sum_{i=0}^{\infty} x^i/i!$ has finitely many terms
in the algebra $A(X)$ factorized by all monomials of degree $\ge c+1$, and the equation
$\exp z = \exp x \exp y$ with Jacobian matrix $[-1,-1, 1]$ has a unique solution $z$ for arbitrary $x,y\in L(c,X)$. This is
given (by a finite version of) the Baker - Campbell - Hausdorff (BCH)
formula (\cite{Bou}, 6.4)
\begin{equation} \label{BCH}
z = \log(\exp x\exp y)= x+y +\frac 12 (x,y) + \frac{1}{12} (x,(x,y)) -\frac{1}{12} (y,(x,y))+\dots
\end{equation}
The mapping $(x,y)\mapsto z=x \circ y $ given by (\ref{BCH}) defines a group
operation $\circ$ on an arbitrary nilpotent Lie algebra $L$ (\cite{Bou}, 8.3).

A weaker generalization is valid for  the polynomial functions $\vp: A\times A\times A\to A$  in $3$ variables
defined on arbitrary nilpotent algebra $A$ over a field $\F$. Assume that the Jacobian matrix
$[\alpha, \beta, \gamma]$ of the function $\vp(x,y,z): A\times A\times A\to A$ has all non-zero entries. Then by Theorem \ref{tAG}, the equation $\vp(x,y,z)=0$
can be resolved with respect to each of the variables, e.g. $z = \lambda x +\mu y + f(x,y)$, where $\lambda,\mu \ne 0$ and the polynomial $f$ does not contain
the monomial $x$ and $y$ with coefficients from $\F$. Then the obvious change
of variables makes $\lambda=\mu=1$, and the function $\vp$
define the binary operation on $A$:
\begin{equation}\label{bino}
a\circ b = a +b + f(a,b)
\end{equation}
Let $Q(A)$ be the set $A$ with the operation $\circ$.
\begin{prop}\label{lQuasi} The operation $\circ$ on $A$ given by formula (\ref{bino}) makes $Q(A)$ a quasigroup, that is, each equation of the form $a\circ y = c$   and $x\circ b = c$ has a unique solution in $A$, for any $a,b, c\in A$.
\end{prop}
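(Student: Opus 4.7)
My plan is to reduce each of the two quasigroup equations to a nonsingular square polynomial system in one unknown, and then invoke Proposition \ref{pNSQ}.

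For the equation $a \circ y = c$, I would rewrite it as
\[
y + f(a, y) = c - a,
\]
and view this as a single polynomial equation in the unknown $y$ with coefficients in $A$ (the elements $a$ and $c$ being absorbed as constants). The key step is computing its $1 \times 1$ Jacobian matrix. By the construction preceding formula (\ref{bino}), $f(x,y)$ contains no term of the form $\lambda x + \mu y$ with $\lambda, \mu \in \F$: every monomial of $f$ either has degree $\ge 2$ in the variables $x, y$ or involves at least one coefficient factor from $A$. Under the substitution homomorphism $\cF(\{x,y\}) \ast A \to \cF(\{y\}) \ast A$ sending $x \mapsto a$ and identical on $A$, a monomial of the first kind remains of degree $\ge 2$ in $y$, while a monomial of the second kind still contains at least one factor from $A$. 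Neither type contributes to the $\F$-linear part of the resulting polynomial in $y$. Hence the only $\F$-linear contribution on the left comes from the displayed $y$ itself, so the Jacobian is $[1]$, which has maximal rank. By Proposition \ref{pNSQ}, the equation has a unique solution $y \in A$.

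The equation $x \circ b = c$ is handled symmetrically: it becomes $x + f(x, b) = c - b$, and the same argument (now substituting $y \mapsto b$) shows that its Jacobian in $x$ is $[1]$, yielding a unique solution $x \in A$.

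The one point requiring care, which I anticipate to be the main (if modest) obstacle, is verifying that the specialization $x \mapsto a$ (resp.\ $y \mapsto b$) does not inadvertently create new $\F$-linear terms in the remaining variable; the dichotomy above between ``degree $\ge 2$'' monomials and ``those containing a factor from $A$'' is precisely what rules this out, and it is the reason one needs to have already eliminated the $\F$-linear part of $f$ by the change of variables that produced (\ref{bino}).
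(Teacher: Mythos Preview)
Your argument is correct and essentially the same as the paper's. The only packaging difference is that the paper keeps all three variables and invokes Theorem \ref{tAG} on the single equation $z-x-y-f(x,y)=0$ with Jacobian $[-1,-1,1]$, noting that any of $x,y,z$ can serve as the pivotal variable; you instead substitute the constant first and invoke Proposition \ref{pNSQ} on the resulting one-variable equation. Your extra verification that the substitution $x\mapsto a$ cannot create a new $\F$-linear term in $y$ is exactly what is needed for your route and is handled correctly.
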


\proof Each of the equations $a\circ y = c$   and  $x\circ b = c$ has
a unique solution by Theorem \ref{tAG} since the Jacobian matrix
of the equation $z-x-y-f(x,y)$ is $[-1,-1,1]$, and so every
variable can be selected as the pivotal.
\endproof

\it{From now we will assume that the polynomial $f(x,y)$ defining the
operation $(\ref{bino})$ contains no constants from $A$, that is,
belongs to the free algebra $\cF(x,y)$.}

\begin{rk} \label{sub} This assumption immediately implies that every subalgebra $A'\subset A$ becomes a subquasigroup of $Q(A)$, because $\circ$ is a {\it derived operation}, i.e. the composition of the defining operations on $A$. Also, any algebra homomorphism $A\to B$ is also a quasigroup homomorphism
$Q(A)\to Q(B)$.
\end{rk}

\begin{prop}\label{loo}
If the polynomial $f(x,y)$ from (\ref{bino}) has no monomials depending on one variable only, $Q(A)$ is a loop in the sense that it has a neutral element $0$.
\end{prop}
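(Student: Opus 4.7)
The plan is to observe that $Q(A)$ is already a quasigroup by Proposition \ref{lQuasi}, and then to verify directly that $0\in A$ serves as a two-sided neutral element for $\circ$. Since a loop is by definition a quasigroup with a (necessarily unique) two-sided identity, this will finish the proof.

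The key auxiliary fact I would isolate first is the following: any nonassociative monomial $u(x,y)\in\cF(x,y)$ that contains at least one occurrence of $x$ evaluates to $0$ as soon as $x$ is set to $0$, and similarly for $y$. This is a routine induction on $\deg u$. If $\deg u = 1$, then $u = x$ and the statement is trivial. If $u = u_1 u_2$ with $\deg u_i \ge 1$, then an occurrence of $x$ in $u$ must lie inside $u_1$ or $u_2$; by induction, the corresponding factor evaluates to $0$ once $x$ is set to $0$, and bilinearity of the product in $A$ gives $u_1 u_2 = 0$.

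Now the hypothesis of the proposition says that every monomial appearing in $f(x,y)$ depends on both variables, i.e.\ contains at least one $x$ and at least one $y$. Applying the lemma once with $x=0$ and once with $y=0$, we conclude
\[
f(0,b) = 0 \quad\text{and}\quad f(a,0) = 0 \qquad \text{for all } a,b\in A.
\]
Substituting into the definition $a\circ b = a + b + f(a,b)$ then yields
\[
0\circ b = 0 + b + f(0,b) = b, \qquad a\circ 0 = a + 0 + f(a,0) = a,
\]
so $0$ is a two-sided identity. Combined with Proposition \ref{lQuasi}, this shows $Q(A)$ is a loop.

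I do not anticipate any real obstacle here; the argument is purely computational. The only subtle point worth stating explicitly is the vanishing lemma for nonassociative monomials, which rests on bilinearity of multiplication together with the standing convention (imposed just before Remark \ref{sub}) that $f$ contains no constants from $A$, so that $f$ really lies in the free algebra $\cF(x,y)$ and the inductive argument applies.
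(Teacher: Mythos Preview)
Your proof is correct and follows exactly the same approach as the paper: show that $f(a,0)=f(0,a)=0$ and plug into (\ref{bino}) to see that $0$ is a two-sided neutral element. The paper's proof is a one-liner that simply asserts this vanishing; you have additionally spelled out the easy induction on $\deg u$ that justifies it, which is a welcome bit of detail but not a different route.
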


\proof Indeed, under the assumption of the statement, $f(a,0)=f(0,a) =0$
for every $a\in A$, i.e. $a\circ 0=0\circ a =a$ by (\ref{bino}).\endproof

For the detailed treatment of quasigroups and loops see \cite{BRU}.

An algebra (a quasigroup, loop, group,...) A is called {\it relatively
free} or free in a variety $\mathcal{V}$ of algebras (quasigroups,...) if it
belongs to $\mathcal{V}$ and has a set of generators $X$ ({\it free basis})
such that every mapping $X\to B$ to arbitrary $B\in \mathcal{V}$ extends
to a homomorphism $A\to B$. It follows from Birkhoff's theorem that $A$
is relatively free iff every mapping $X\to A$ extends to a homomorphism
$A\to A$.

\begin{prop}\label{r1}
Suppose  a nilpotent algebra $A$ over a field $\F$ is relatively free with a free basis $X$, then the subquasigroup $Q(X)$ generated by $X$ in the quasigroup (loop) $Q(A)$) defined by (\ref{bino}), is relatively free, too.
\end{prop}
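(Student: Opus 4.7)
The plan is to invoke the Birkhoff characterization recalled just before the statement: a quasigroup is relatively free with free basis $X$ if and only if every mapping $X \to Q(X)$ extends to an endomorphism of $Q(X)$. So I would fix an arbitrary $\psi : X \to Q(X)$ and construct the extension.

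The natural candidate comes from the algebra side. Since $Q(X) \subseteq A$ as sets, $\psi$ is, in particular, a map $X \to A$, and because $A$ is relatively free with free basis $X$ as an $\F$-algebra, $\psi$ extends uniquely to an algebra endomorphism $\bar\psi : A \to A$. By Remark \ref{sub}, the standing assumption that $f(x,y)\in\cF(x,y)$ in (\ref{bino}) makes $\circ$ a derived operation on $A$, so every algebra homomorphism is automatically a quasigroup homomorphism with respect to $\circ$. Hence $\bar\psi$ is a quasigroup endomorphism of $Q(A)$.

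It remains to check that $\bar\psi$ sends the subquasigroup $Q(X)$ into itself, so that its restriction is the desired endomorphism of $Q(X)$. By construction $\bar\psi(X) = \psi(X) \subseteq Q(X)$. Any element of $Q(X)$ is obtained from $X$ by finitely many applications of $\circ$ and of the two divisions; and because in any quasigroup the left and right divisions are uniquely determined by $\circ$, a quasigroup homomorphism automatically preserves them. Applying $\bar\psi$ to such a quasigroup expression replaces each occurrence of $x\in X$ by $\psi(x) \in Q(X)$, and the result again lies in $Q(X)$ because $Q(X)$ is a subquasigroup. Thus $\bar\psi|_{Q(X)}$ is an endomorphism of $Q(X)$ extending $\psi$, and Birkhoff's criterion yields the proposition. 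I anticipate no real obstacle: the substance of the argument is concentrated in Remark \ref{sub}, which leverages the no-constants hypothesis on $f(x,y)$ to convert algebra endomorphisms into quasigroup endomorphisms, and the remainder is a routine closure-under-generation check.
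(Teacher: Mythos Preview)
Your proof is correct and follows essentially the same approach as the paper: extend the given map $X\to Q(X)\subseteq A$ to an algebra endomorphism of $A$, then invoke Remark \ref{sub} to see it is a quasigroup endomorphism of $Q(A)$. You are actually more careful than the paper, which stops at ``every mapping $X\to Q(A)$ (in particular, $X\to Q(X)$) extends to a quasigroup homomorphism'' without explicitly verifying that the image of $Q(X)$ lands back in $Q(X)$; your closure-under-generation argument supplies that missing sentence.
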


\begin{proof} Since any mapping $\alpha:X\to A$ extends to a homomorphism  of algebras $\overline{\alpha}:A\to A$ and $\overline{\alpha}$ is also a quasigroup homomorphism $Q(A)\to Q(A)$ by Remark \ref{sub}, every mapping $X\to Q(A)$ (in particular, $X\to Q(X)$) extends to a quasigroup homomorphism, which completes the proof.
\end{proof}

{\bf Examples}. (1) If  $A$ is a free nilpotent Lie algebra of nilpotency class $c$ with a basis $X$, over the field of rational numbers, then $Q(X)$ is a free nilpotent group of class $c$. For
other nilpotent varieties of algebras Lie, the group varieties obtained by (\ref{BCH})  are described in \cite[Chapter 8]{Ba}.

(2) The same formula (\ref{bino}) sets a correspondence between the real (nilpotent) Malcev algebras and analytic Moufang loops \cite{Ku}.

(3) Mostovoy, Shestakov and Perez-Izquierdo ( \cite{MSPI1}) prove that in the  free nilpotent, nonassociative algebra with free generators $x,y$, the loop generated by $x,y$ with respect to the operation $x\ast y=x+y+xy$ is a free nilpotent loop.

\medskip
\begin{rk}\label{sssMO}
Given a set $S$ and natural numbers $m,n$, one can consider multiple operations
\begin{equation}\label{eMO}
\omega: \underbrace{S\times\cdots\times S}_m\to\underbrace{S\times\cdots\times S}_n.
\end{equation}
In the case where $S=A$ is a nilpotent algebra, such operations appear while solving systems of polynomial equations (\ref{eSystem}) with Jacobian matrix of
maximal rank. Namely, given such a system, according to Theorem \ref{tAG}, we can write the solutions to (\ref{eSystem}) in the form (\ref{eHi}). Giving  $x_{r+1},\dots,x_m$ values in $S$, we compute the values for $x_1,\dots,x_r$ in $S$, using (\ref{eHi}). This provides us with an operation $\underbrace{A\times\cdots\times A}_{m-r}\to\underbrace{A\times\cdots\times A}_r$. If $m=2$, $r=1$ (or $m=2r$) and there are two disjoint non-singular
$1\times 1$ (resp., $r\times r$) submatrices in $J$, then we have two mutual inverse polynomial mappings $A\to A$ (resp., $r$-th power of $A$ to itself). In the case $m=3$, $r=1$, we obtain an operation $A\times A\to A$, considered above. An important case of polynomial mappings $m=2,\,r=1$ is treated in Section \ref{GU}. 

Thus, we get a large family of quasigroups and their
generalizations depending on the choice of $A$ and the choice of polynomial
operations. If $\F$ is a finite field, they might be interesting for
the Cryptography (see, for instance, \cite{G}, \cite{CGV}).
\end{rk}

\section{Reconstruction of algebras}\label{sRA}
\subsection{Quadratic part of the operation $\circ$.}

The function $f(x,y)$ from (\ref{bino}) has quadratic part $kxy+lyx+mx^2+ny^2$, where  $k,l,m,n,\in{\F}$. In this section, we want to simplify this quadratic part, that is, to derive a new operation with a simpler quadratic part, using $\circ$ and the multiplication by scalars from $\F$. This technical result is used in the proof of the main Theorem \ref{pDerived}
in the next section. Thus, $A$ is a nilpotent algebra of class $c\ge 2$, and we have
\begin{equation}\label{eStar}
a\circ b =a+b+kab+lba+ma^2+nb^2+...,
\end{equation}
where the dots here and everywhere in this section are used for the monomials of degree $\ge 3$.

{\it We will assume in this section that the ground field $\F$ has at least $3$
elements, $k\ne l$ and if $m=n=0$, then $k\ne- l$.}

\begin{lemma}\label{la2} Under the above hypotheses, there is a derived  operation
$a\#b=a^2+\dots$ for $\circ$ and the scalar multiplications.
\end{lemma}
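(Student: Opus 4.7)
The plan is to exhibit three explicit one-variable derived operations $\phi_1,\phi_2,\phi_3$ built from~$\circ$ and scalar multiplication, show that modulo~$A^3$ each has the form $\alpha_i a^2$ for some scalar $\alpha_i\in\F$, and verify that the standing hypothesis precludes all three $\alpha_i$ from vanishing at once; rescaling by $\alpha_i^{-1}$ then furnishes~$\#$.

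Concretely, I would take
$$\phi_1(a)=a\circ(-a),\qquad \phi_2(a)=(0\circ a)\circ(-a),\qquad \phi_3(a)=(a\circ 0)\circ(-a),$$
each a polynomial expression in~$a$ built from $\circ$ and multiplication by the scalars $0,\pm1\in\F$ (with $0=0\cdot a$). Substituting into (\ref{eStar}) and retaining only monomials of degree~$\le 2$, a short direct calculation gives
$$\phi_i(a)\equiv\alpha_i\,a^2\pmod{A^3},\qquad i=1,2,3,$$
with
$$\alpha_1=m+n-k-l,\quad \alpha_2=m+2n-k-l,\quad \alpha_3=2m+n-k-l.$$
The computation is uniform: in each $\phi_i$ the linear parts of the two $\circ$-factors are negatives of one another, so the quadratic contribution is the sum of the two inner quadratic parts together with the cross term $kXY+lYX+mX^2+nY^2$ applied to the linear approximations $X,Y=\pm a$.

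The relations $\alpha_2-\alpha_1=n$ and $\alpha_3-\alpha_1=m$ then show that $\alpha_1=\alpha_2=\alpha_3=0$ forces $m=n=0$ and hence $\alpha_1=-(k+l)=0$, i.e.\ $k=-l$. This is precisely the exceptional configuration excluded by the lemma's hypothesis, so some $\alpha_i$ is nonzero. Setting
$$a\,\#\,b\;:=\;\alpha_i^{-1}\,\phi_i(a)$$
(a binary operation that happens to be trivial in~$b$) yields the required derived operation with quadratic part exactly~$a^2$. The only point requiring attention is to choose the family $\{\phi_i\}$ so that the joint vanishing $\alpha_1=\alpha_2=\alpha_3=0$ happens only at the single configuration $(k+l,m,n)=(0,0,0)$ ruled out by the hypothesis; the three displayed choices accomplish exactly that.
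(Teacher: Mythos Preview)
Your proof is correct. Both your argument and the paper's work by producing one-variable derived expressions whose quadratic part is $\alpha a^2$ and then arguing that some such $\alpha$ is nonzero; your $\phi_1=a\circ(-a)$ and $\phi_3=(a\circ 0)\circ(-a)$ are in fact exactly the operations the paper calls $v_0$ and $v_1$.

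The difference is organisational. The paper splits into cases: when $m=n=0$ it passes to the auxiliary two-variable operation $(a\circ b)\circ(-a)$ to force a nonzero $a^2$-coefficient, and when $m\ne 0$ it iterates $u_{s+1}=u_s\circ 0$ to obtain the family $v_s=u_s\circ(-a)$ with coefficients $((s{+}1)m+n-k-l)$, two consecutive members of which differ by $m$. You instead add the single extra operation $\phi_2=(0\circ a)\circ(-a)$, so that the identities $\alpha_2-\alpha_1=n$ and $\alpha_3-\alpha_1=m$ collapse both cases at once: simultaneous vanishing forces $m=n=0$ and $k+l=0$, which is exactly the excluded configuration. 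This is a genuinely tidier packaging; it uses only the scalars $0,\pm 1$ before the final rescaling, avoids the iteration, and makes no appeal to the hypothesis $|\F|\ge 3$ beyond what is needed for $\alpha_i^{-1}$ to exist.
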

\begin{proof}
At first, we want to find a derived operation for (\ref{eStar}), where  one of $m$, $n$ is nonzero. Since we can multiply by any scalars, any $a*b=\lambda a$, $\lambda\in{\F}$, is a derived operation. Now if $a\circ b=a+b+kab+lba+\dots$, then
the following derived operation for $\circ$:
\[
a\# b=(a\circ b)\circ(-a)=b+(k-l)ab+(l-k)ba-(k+l)a^2+\dots
\]
has $k+l\ne 0$ as a coefficient of $a^2$.

So we may assume that $m\ne 0$, the case $n\ne 0$ being similar.
We set $u_0=a$ and $u_{s+1}=u_s\circ 0$, $s=1,2,\dots$. Then $u_s=a+sma^2+\dots$. Indeed, by induction,
\[u_{s+1}=u_s\circ 0=(a+sma^2+\dots)\circ 0\] \[=a+sma^2+0+m(a+sa^2)^2+\dots=a+(s+1)ma^2+\dots,\]
as claimed.

Then consider
\[v_s=u_s\circ(-a)=(a+sma^2+\dots)\circ(-a)\] \[=a+sma^2-a+(-k-l+m+n)a^2+\dots
=((s+1)m-k-l+n)a^2+\dots\]
Since $m\ne 0$, one of $(sm-k-l+n)$ or $((s+1)m-k-l+n)$ is nonzero. For instance, one of  $(-k-l+n)a^2+\dots$ or $(m-k-l+n)a^2+\dots $ is a derived operation. Thus, multiplying by a nonzero scalar, we can see that $a^2+\dots$ is a derived operation.
\end{proof}

\begin{lemma}\label{nom} There is a derived operation
\begin{equation}
a\ast b=a+b+kab+lba+nb^2+\dots, \mbox{ where }k\ne\pm l\label{gtm4}
\end{equation}
\end{lemma}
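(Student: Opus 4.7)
The plan is to build $\ast$ by $\circ$-composing $a\circ b$ with scalar multiples of the operation $w(a) := a\# b$ furnished by Lemma \ref{la2}; recall that $w(a)$ has linear part $0$ and quadratic part equal to $a^2$, and depends, at lowest non-trivial order, only on $a$.

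My first candidate is $\ast_1(a,b) = (a\circ b)\circ(-2m\cdot w(a))$. To see why $-2m$ is the right scalar, I would expand the quadratic part of the two-variable operation $(a\circ b)\circ(\alpha w(a))$, taking care that the summand $mu^2$ of the $\circ$-formula $u\circ v = u+v+kuv+lvu+mu^2+nv^2+\dots$, applied to $u = a\circ b$, feeds $m u_1^2 = m(a+b)^2$ back into the quadratic part of the answer. A short calculation gives quadratic part
\[
(k+m)ab + (l+m)ba + (2m+\alpha)a^2 + (n+m)b^2,
\]
so $\alpha = -2m$ kills the $a^2$ coefficient. The new coefficients $k' = k+m$, $l' = l+m$, $n' = n+m$ satisfy $k' - l' = k - l \ne 0$ and $k' + l' = k + l + 2m$, so $\ast_1$ already has the shape required by the lemma whenever $k + l + 2m \ne 0$. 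In characteristic $2$ this condition reads $k + l \ne 0$, which is equivalent to $k \ne l$ and hence automatic; only $\mathrm{char}\,\F \ne 2$ can require further work.

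In the exceptional situation $k + l + 2m = 0$ I would first try reversing the order of composition: for $\ast_2 = (-(m+n)\,w(a))\circ(a\circ b)$ the feedback is now $n v_1^2 = n(a+b)^2$ coming from the $nv^2$ summand of $\circ$, and an analogous calculation produces quadratic part $(k+n)ab + (l+n)ba + 0\cdot a^2 + 2n\, b^2$, which meets the lemma's requirements as long as $k + l + 2n \ne 0$. If both conditions fail, then $m = n$, and Lemma \ref{la2}'s standing hypothesis (that $k \ne -l$ when $m = n = 0$) rules out $m = n = 0$, so $m = n \ne 0$. In that remaining case I would iterate the first correction, taking $\ast_3 = ((a\circ b)\circ(\alpha w(a)))\circ(\beta w(a))$: each application of the outer $\circ$ contributes its own $m(a+b)^2$ feedback, the total $a^2$ coefficient is $3m + \alpha + \beta$, and the choice $\alpha + \beta = -3m$ kills it. The resulting quadratic part is $(k+2m, l+2m, 0, n+2m)$, and the new sum $k + l + 4m = 2m$ is nonzero because $m \ne 0$ and $\mathrm{char}\,\F \ne 2$.

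The main obstacle is precisely the bookkeeping of the feedback terms $mu_1^2$ and $nv_1^2$ inside $\circ$: they reintroduce the very quadratic monomials one is trying to eliminate, and they are what produces the exceptional identity $k + l + 2m = 0$ as the boundary of the easy case and forces the small case-analysis above. Once this feedback is tracked, each case reduces to a short linear computation in the quadratic coefficients modulo $A^3$.
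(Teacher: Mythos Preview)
Your argument is correct and follows the same route as the paper: the computation of the quadratic part of $(a\circ b)\circ(\alpha\,w(a))$ is exactly the paper's formula (\ref{gtm2}), the choice $\alpha=-2m$ is the paper's choice $t=-2m$, and your double composition $\ast_3$ in the exceptional case $k+l+2m=0$ is precisely the paper's two-step construction $(a\$b)\circ(-2m\,a^2+\dots)$ with $\alpha=-m$, $\beta=-2m$. Your treatment of that exceptional case is in fact a bit more careful: the paper simply writes ``since we assume $m\ne 0$'' when concluding $k''\ne -l''$, whereas your intermediate attempt $\ast_2$ (reversing the order so that the feedback comes from $nv^2$) disposes of the subcase $m=0$, $n\ne 0$ and legitimately reduces the remaining situation to $m=n\ne 0$ before invoking $\ast_3$.
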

\begin{proof}
Lemma \ref{la2} and multiplication by $t\in{\F}$ provide us
with a derived operation $ta^2+\dots$. Now
\[(a\circ b)\circ(t a^2+\dots)=(a+b+kab+lba+ma^2+nb^2)+ta^2\] \[+m(a+b+kab+lba+ma^2+nb^2)^2+\dots\]
\begin{equation} \label{gtm2}
=a+b+(k+m)ab+(l+m)ba+(2m+t)a^2+(n+m)b^2 +\dots
\end{equation}

If $\chr{\F}=2$, then choosing $t=0$ we get rid of $m$ and  still have $k+m\ne\pm (l+m)$.

Now assume  $\chr{\F}\ne 2$. If $t=-2m$, then setting $k'=k+m$, $l'=l+m$ and $n'=n+m$ we turn  (\ref{gtm2}) into
\[a * b=(a\circ b)\circ(-2m a^2+\dots)=a+b+k'ab+l'ba+n'b^2+\dots\]
Although $k'\ne l'$, we might have $k'=-l'$. So we treat this case separately. Let us assume $k+m=-l-m$ and choose $t=-m$ in (\ref{gtm2}). Then we get
\[a\$ b=(a\circ b)\circ(-m a^2+\dots)=a+b+k'ab+l'ba+ma^2+n'b^2+\dots\]
Next we create a desired operation $a * b=(a\$ b)\circ(-2m a^2+\dots)$. In this case,
\[a * b=(a\$ b)\circ(-2m a^2+\dots)=a+b+k'ab+l'ba+ma^2+n'b^2-2ma^2\] \[+m(a+b+k'ab+l'ba+ma^2+n'b^2)^2+\dots
=a+b+k''ab+l''ba+n''b^2+\dots,\]
where $k''=k+2m=-l-m+2m=-l+m$ and $-l''=-l-2m$. Since we assume $m\ne 0$, we have that $k''\ne -l''$. We still have $k''\ne l''$, and so the lemma is proved.
\end{proof}

\begin{lemma}\label{lakl} There are derived operations $a\$b=a+b+kab+lab+\dots$, where
$k\ne\pm l$, and $a\#b =kab +lba+\dots $ with $k\ne\pm l$.
\end{lemma}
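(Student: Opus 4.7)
The plan has two parts, for $\$$ and $\#$ respectively, and rests on one observation: if two derived operations $W,W'$ each have zero linear part, then the $\$$-composition $W\$W'$ acts additively on their quadratic parts, because the bilinear cross-terms $kWW'+lW'W$ in the $\$$-polynomial start in degree $\ge 4$. Combined with Lemma \ref{la2}, which supplies the unary derived polynomial $\psi(x)=x^2+\dots$, this lets us freely cancel any pure $a^2$ or $b^2$ summand from a zero-linear operation.

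Producing $\$$ is then straightforward: take the operation $*$ of Lemma \ref{nom}, of the form $a*b=a+b+kab+lba+nb^2+\dots$ with $k\ne\pm l$, and set $a\$b:=(a*b)*(-n\psi(b))$. Since $-n\psi(b)$ has zero linear part, the outer $*$-composition simply adds its quadratic $-nb^2$ to $kab+lba+nb^2$, cancelling the $nb^2$ term and leaving $a\$b=a+b+kab+lba+\dots$ with the same $k,l$. For $\#$, I would use two zero-linear building blocks with different symmetry types, namely
\[
W_1:=(a\$b)\$\bigl((-a)\$(-b)\bigr),\qquad W_2:=(a\$b)\$\bigl(-(a\$b)\bigr).
\]
A short expansion of the $\$$-polynomial in degree $1$ and degree $2$ yields that both have zero linear part, with quadratic parts
\[
W_1\colon\ -(k+l)(a^2+b^2)+(k-l)(ab-ba),\qquad W_2\colon\ -(k+l)(a^2+ab+ba+b^2).
\]
Applying Lemma \ref{la2} at $a$ and at $b$ to obtain $\psi(a)$ and $\psi(b)$, I $\$$-compose with $(k+l)\psi(a)$ and $(k+l)\psi(b)$ to kill the pure $a^2$ and $b^2$ contributions, obtaining $W_1'$ with quadratic $(k-l)(ab-ba)$ and $W_2'$ with quadratic $-(k+l)(ab+ba)$. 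Finally $\#:=W_1'\$W_2'$ has quadratic $-2l\cdot ab-2k\cdot ba$, and the required inequality on the new coefficients is exactly the original $k\ne\pm l$.

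The main obstacle is characteristic $2$, which is allowed since $|\F|\ge 3$ forces $|\F|\ge 4$ in that case. There $(-a)=a$ and $-(a\$b)=a\$b$, so $W_1$ and $W_2$ coincide and $W_1'\$W_2'$ has zero quadratic. The fix is to twist by a scalar $\omega\in\F\setminus\F_2$: one builds, for instance, $Z:=\bigl((\omega a)\$b\bigr)\$\bigl(a\$(\omega b)\bigr)$, suitably scalar-normalised so that its linear part vanishes; its quadratic part then contains the asymmetric pair $(k+l\omega^2)ab+(l+k\omega^2)ba$ with unequal coefficients (since $k\ne l$ and $\omega^2\ne 1$), and the same $\psi$-based cleanup extracts an $\alpha ab+\beta ba$ with $\alpha\ne\beta$, which is the only remaining condition in this characteristic.
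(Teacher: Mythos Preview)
Your construction of $\$$ matches the paper's: take $*$ from Lemma~\ref{nom} and kill the $nb^{2}$ term by composing with $-n\psi(b)$.

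For $\#$ in characteristic $\ne 2$, your argument is correct but organised differently from the paper. The paper does not build the pair $W_1,W_2$. When $|\F|>3$ it picks a scalar $\mu\ne 0,\pm 1$, forms the single combination $((-\mu a)\$(-\mu b))\$(\mu(a\$b))$, divides by $\mu$, and after the $\psi$-cleanup obtains $(k-\mu l)ab+(l-\mu k)ba$; the choice $\mu\ne\pm 1$ guarantees the new coefficients are neither equal nor opposite. For $\F=\Z_3$ it starts from essentially your $W_1$, but then removes the whole $(a+b)^2$ block in one step via $\psi(a\$b)=(a+b)^2+\dots$ rather than manufacturing a second zero-linear operation $W_2$. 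Your route through $W_1'\$W_2'$, giving $-2l\,ab-2k\,ba$, is a clean and symmetric alternative whenever $2\ne 0$.

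The characteristic-$2$ patch, however, has a genuine gap. The linear part of $Z=((\omega a)\$b)\$(a\$(\omega b))$ is $(\omega+1)(a+b)\ne 0$, and no scalar multiplication can ``normalise'' a nonzero linear part to zero. What you actually need is to $\$$-compose $Z$ with another derived operation of the same linear part---e.g.\ $(\omega+1)(a\$b)$, which in characteristic $2$ has linear part $(\omega+1)(a+b)$---and then the cross terms $kXY+lYX$ in that outer $\$$ alter the $ab,ba$ coefficients, so your claimed quadratic $(k+l\omega^2)ab+(l+k\omega^2)ba$ is no longer what survives and the bookkeeping must be redone from scratch. The paper sidesteps this entirely: since a characteristic-$2$ field with $|\F|\ge 3$ has $|\F|\ge 4$, a scalar $\mu\ne 0,1(=\!-1)$ is available, and the $|\F|>3$ argument above already covers characteristic $2$ uniformly without a separate case.
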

\begin{proof}
Lemmas \ref{nom} and \ref{la2} provide us with the following operation
\[(a* b)\circ(-nb^2+\dots)=a+b+kab+lba+nb^2-nb^2+\dots=a+b+kab+lba+\dots\]

Now we can get rid of the linear part, as follows. First, we assume that $\ F$ has more than three elements. We pick $0\ne \mu\in{\ F}$ and consider the derived operations
\[(-\mu a)\$(-\mu b)=-\mu a-\mu b+\mu^2 kab+\mu^2 lba+\dots,\]
\[\mu(a\$ b)=\mu a+\mu b+\mu kab+\mu lba+\dots\]
and their derived operation
\[((-\mu a)\$(-\mu b))\$(\mu(a\$ b))\] \[=(\mu+\mu^2) kab+(\mu +\mu^2)lba-(k+l)\mu^2(a^2+ab+ba+b^2)+\dots\]
\[=(\mu k-\mu^2 l)ab+(\mu l-\mu^2k)ba-(k+l)\mu^2a^2-(k+l)\mu^2b^2+\dots,\]
that after division by $\mu$, has the form
\[a\vert b=(k-\mu l)ab+(l-\mu k)ba-(k+l)\mu a^2-(k+l)\mu b^2+\dots \]
It now follows by Lemma \ref{la2}  that we can form a derived operation
\[((a\vert b)\vert (k+l)\mu a^2+\dots)\vert (k+l)\mu b^2+\dots)=(k-\mu l)ab+(l-\mu k)ba=\hat{k}ab+\hat{l}ba+\dots,\]
where $\hat{k}=(k-\mu l)$ and $\hat{l}=(l-\mu k)$. Now, considering $k\ne\pm l$, we have that $\hat{k}=\hat{l}$ implies $\mu=-1$ while $\hat{k}=-\hat{l}$ implies $\mu=1$. So if we take any $\mu\ne\pm 1$, we arrive at a derived operation of the form $\#$, as in the formulation of the lemma.

In the remaining case ${\F}=\Z_3$ (or just if $\chr{\F}\ne 3$), we proceed as follows. Applying the operation $\$$, we have $(-a)\$(-b)=-a-b+kab+lba+\dots$ and
\[
(a\$b)\$((-a)\$(-b))=-(k+l)(a+b)^2+2kab+2lba+\dots.
\]
Now since  $(a\$ b)^2 = (a+b)^2+\dots$, we obtain
\[
[(-(k+l)(a+b)^2 + 2k ab + 2 lba+\dots)]\$ [(k+l)(a\$ b)^2+\dots] = 2k ab +2l ba+\dots.
\]
Here $(a\$ b)^2+\dots$ is a derived operation provided by Lemma \ref{la2}, and so is $2k ab +2l ba+\dots.$
After dividing by 2, we have proved that in all cases, we have a desired derived operation $\#$.
\end{proof}

\subsection{From (quasi)groups back to algebras}

In this section, we show that the addition and multiplication in an
nilpotent algebra can be restored from the quasigroup operation $\circ$ (\ref{eStar}) and scalar multiplication.

\begin{theorem}\label{pDerived}
Let $\ F$ be a field with at least $3$ elements. Then for any nilpotent of class $c\ge 2$  algebra $A$, the operations $a+b$ and $ab$ are derived from the $\circ$-operation (\ref{eStar}) and the scalar multiplications by the elements of $\F$, if and only if $k\ne l$ in all cases and $k\ne-l$ if $m=n=0$.
\end{theorem}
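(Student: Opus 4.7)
The theorem has two implications; my plan is to handle the ``only if'' direction by a structural invariant on derived operations, and the ``if'' direction by induction on the nilpotency class $c$, with the base $c=2$ reduced to direct manipulation using Lemma \ref{lakl} and the step exploiting that $A^c$ is central in $A$.

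\textbf{Only if.} Suppose first $k=l$. I would prove by induction on the depth of the derivation tree that every operation $F(x,y)$ obtained from $\circ$ and scalar multiplications has a quadratic part whose coefficients of $xy$ and $yx$ are equal. Scalar multiplication clearly preserves this; for a composition $F_1\circ F_2$, the contribution $kF_1F_2+lF_2F_1$ collapses to $k(F_1F_2+F_2F_1)$ when $k=l$, and the $mF_1^2$, $nF_2^2$, and inherited quadratic parts are symmetric automatically. A concrete obstruction is the $3$-dimensional class-$2$ algebra $A=\langle e_1,e_2,e_3\mid e_1e_2=e_3,\ e_2e_1=0\rangle$ (all other products zero): for any derived $G$, direct computation shows $G(e_1,e_2)$ and $G(e_2,e_1)$ have equal $e_3$-components, whereas the multiplication $xy$ gives $e_3$ and $0$ respectively, so $xy$ is not derivable. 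In the second failure case $k=-l$, $m=n=0$, the analogous invariant says the quadratic part lies in the span of the commutator $xy-yx$; a commutative class-$2$ algebra with $e_1e_2=e_2e_1=e_3\neq 0$ makes $\circ$ reduce to ordinary addition on $A$, so derived operations are linear and cannot reach $e_3$.

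\textbf{If, base $c=2$.} Under our hypotheses, Lemma \ref{lakl} supplies derived operations $a\$b=a+b+\tilde k\,ab+\tilde l\,ba$ and $a\#b=\hat k\,ab+\hat l\,ba$ with $\tilde k\neq\pm\tilde l$ and $\hat k\neq\pm\hat l$, the higher-degree tails vanishing since $A^3=0$. The swap $b\#a=\hat l\,ab+\hat k\,ba$ together with the invertible matrix of determinant $\hat k^2-\hat l^2\neq 0$ lets me pick $\lambda_1,\lambda_2\in\F$ with $\lambda_1(a\#b)+\lambda_2(b\#a)=ab$; composing through $\$$ gives $(\lambda_1(a\#b))\$(\lambda_2(b\#a))=ab$, because both arguments lie in $A^2$ and the $\$$-correction terms lie in $A^4=0$. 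A symmetric choice yields $ba$ as a derived operation; scaling derived $ab,ba$ by $-\tilde k,-\tilde l$ and summing via $\$$ (which restricts to $+$ on $A^2$) produces $W(a,b)=-\tilde k\,ab-\tilde l\,ba$, and then $(a\$b)\$W(a,b)=a+b$ since the $\$$-tails vanish in $A^3=0$.

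\textbf{If, inductive step and main obstacle.} Assume the result for class $<c$ with $c\geq 3$. Since $A/A^c$ has class $\leq c-1$, the induction supplies derived operations $\oplus,\otimes$ on $A$ with $a\oplus b\equiv a+b$ and $a\otimes b\equiv ab\pmod{A^c}$. The errors $R_+(a,b)=a\oplus b-(a+b)$ and $R_\cdot(a,b)=a\otimes b-ab$ lie in $A^c$, which is central in $A$: any product touching an $A^c$-element vanishes in $A^{c+1}=0$. This centrality has two decisive consequences: $\oplus$ restricts to $+$ on $A^c$, so $A^c$-valued derived operations can be summed freely; and the higher-order tails of $\#$ and $\$$ are killed once intermediate results enter $A^c$. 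The heart of the proof, and what I expect to be the main obstacle, is realizing each degree-$c$ monomial in $a,b$ as an $A^c$-valued derived operation. My approach is to iterate $\#$ with varied bracketings and variable substitutions and argue, via the non-degeneracy $\hat k\neq\pm\hat l$ and a linear-algebra spanning argument extending the $c=2$ computation, that the derived $A^c$-valued monomials span all degree-$c$ polynomials. Once this spanning is in place, summing through $\oplus$ produces $-R_+(a,b)$ and $-R_\cdot(a,b)$ as derived operations, and composing with $\oplus$ and $\otimes$ to cancel the errors yields $+$ and $\cdot$ exactly.
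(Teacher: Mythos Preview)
Your ``only if'' argument and the base case $c=2$ are correct and match the paper's approach closely; the paper's counterexamples (a Lie algebra for $k=l$, a commutative algebra for $k=-l$ with $m=n=0$) are essentially the same as yours, just phrased more tersely.

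The genuine gap is in your inductive step, at exactly the point you flag as the main obstacle. You propose to span all degree-$c$ polynomials in $a,b$ by iterating the quadratic operation $\#$ with varied bracketings and then invoking a ``linear-algebra spanning argument extending the $c=2$ computation.'' This is not carried out, and it is not clear it goes through: the dimension of the space of degree-$c$ nonassociative monomials in two letters grows like a Catalan number times $2^c$, and you give no mechanism for producing enough independent $\#$-iterates. Moreover, each $\#$-iteration drags along higher-degree tails that you would still need to control.

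The paper bypasses this entirely with a much sharper idea. Working in the free nonassociative nilpotent algebra $\mathcal N$ of class $c$, take your inductively-obtained approximate product $a\bullet b=ab+g(a,b)$ with $g\in\mathcal N^c$. First clean $g$ by subtracting the one-variable parts $g_2(a)=g(a,0)$ and $g_3(b)=g(0,b)$ (these are themselves derived operations, obtained by substituting $0$), so that every monomial of $g$ contains both $a$ and $b$. Now for a degree-$c$ monomial $w=uv$, one of $u,v$ has degree $\ge 2$ since $c\ge 3$; substituting the (inductively expressible, modulo $\mathcal N^c$) elements $u,v$ into $\bullet$ gives $(u+u')(v+v')+g(u+u',v+v')$. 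The first term is $uv$ exactly because $u',v'\in\mathcal N^c$ annihilate, and the second term vanishes because every monomial of $g$ has total degree $c$ and contains both variables, so after substitution its degree is at least $c+1$. Thus every degree-$c$ monomial is expressible \emph{exactly}, with no spanning argument needed. From there, correcting $\oplus$ and $\otimes$ proceeds as you outline. You should replace your $\#$-iteration plan with this substitution trick.
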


\begin{proof} Let us start with the part ``only if''. In case $k=l$, even for
$c=2$, one can take a nilpotent Lie algebra $A$ and notice that $\circ$ is
just the addition in $A$, and thus no nonzero product can be obtained from
$a$ and $b$ by repeated application of $\circ$ and scalar multiplications.

If $k=-l$, while $m=n=0$, the same kind of contradiction appears when we consider an image which is a  nilpotent commutative algebra of class $c=2$.

To prove part ``if'', one may assume that $A$ is a free nonassociative
nilpotent algebra $\mathcal{N}$ of class $c\ge 2$, because every nilpotent algebra $A$ is a homomorphic image of a free nilpotent algebra.

If $c=2$, then changing $a$ and $b$ places, we have a derived operation $a\& b=lab+kba$ from the operation $\#$ given by Lemma \ref{lakl}. The linear combination of these two operations with arbitrary coefficients $r,s$ is the derived operation $(r(a\#b)\circ(s(a\& b))$. Since  $k\ne\pm l$, the coefficients can be chosen so that we simply have the derived operation $ab$.

Now using the operations $\$$ from Lemma \ref{lakl} and $ab$ from the previous
paragraph, we get
\[(a\$b)\$(-kab)=a+b+kab+lba-kab=a+b+lba.\]
The summand $lba$ can be removed in a similar way, and so we obtain the derived operation $a+b$. Thus we have finished the case $c=2$.

As a result, we have proved  that any element of the free nilpotent algebra of class 2 can be expressed in terms of $\circ$ and the scalar multiplications. We will proceed by induction on $c$ in order to prove that the same claim holds for any $c>2$.

We consider a monomial $w=uv\in {\mathcal{N}}^{c}$. By induction, the factor $u$ can be expressed in terms of $\circ$ and the scalar multiplications  modulo ${\mathcal{N}}^{c}$, that is, there is $u'\in\mathcal{N}^{c}$ such that $u+u'$ can be expressed in the desired form. Similarly, there is $v'\in{\mathcal{N}}^{c}$ such that $v+v'$ is expressible. We will plug these expressions in the derived formula for the expression of $ab$ modulo ${\mathcal{N}}^{c}$, which has the form $ab+g(a,b)$, where $g(a,b)\in {\mathcal{N}}^{c}$.

Let us write $ g(a,b) = g_1(a,b) + g_2(a) +g_3(b)$,
where all monomials in $g(a,b)$ contain both $a$ and $b$, $g(a)$ only $a$ and $g(b)$ only $b$.
Substituting 0 for $b$ in $ab+g(a,b)$, we will obtain two derived operations $g_2(a)$ and $-g_2(a)$. Combining them with $ab+g(a,b)$ by means of the operation $\#$ from
Lemma 3 and using that $g$-terms belong to
${\mathcal{N}}^c$, we get
\[
(ab + g(a,b))\#(-g_2(a)) = ab+g_1(a,b) +g_3(b).
\]
in $\mathcal{N}$. In a similar way, we remove  $g_3(b)$. As a result, we obtain a derived operation $ab + g_1(a,b)$. Returning to our previous notation, we now have a derived operation $ab + g(a,b)$, where each monomial of $g(a,b)$ contains both $a$ and $b$ to a nonzero degree.

Then we will get in $\mathcal{N}$ the  equality
\[
(u+u')(v+v')+g(u+u',v+v')=uv.
\]
Indeed, $(u+u')(v+v')=uv$, because $u'$ and $v'$ belong to ${\mathcal{N}}^c$. Now since $c>2$, one of $u+u'$ or $v+v'$ is in ${\mathcal{N}}^2$ and $g(a,b)$ is an annihilating polynomial. So $g(u+u',v+v')=0$. As a result, $uv$ is expressible in terms of $\circ$ and the multiplication by the scalars.

So far we have seen that every monomial from ${\mathcal{N}}^c$ is expressible, but then also every polynomial (element) in $\cF^c$ is expressible since for monomials of degree $c$, the $\circ$ product is just the summation.

By induction, we already had a derived operation of the form $a\&b=a+b+g(a,b)$, where $g(a,b)\in {\mathcal{N}}^c$. For any $h\in {\mathcal{N}}^c$, we would have
\[
(a\&b)\&h=a+b+g(a,b)+h +g(a+b,h)=a+b+g(a,b)+h+g(a+b,0),
\]
following since $h$ is annihilating in $\mathcal{N}$ and $g\in {\mathcal{N}}^2$. Choosing 
\[
h=-g(a,b)-g(a+b,0),
\]
we will have $(a\,\&\,b)\,\&\,h=a+b$ in $\mathcal{N}$. Quite similarly, since by induction there is a derived operation $\bullet$ such that $a\bullet b=ab+g(a,b)$, we can write $(a\bullet b)\bullet(-g(a,b))=ab$, for nilpotent algebras of class $c$. So both $a+b$ and $ab$ are derived operations for $\circ$ and the multiplication by scalars in the nilpotent of class $c$  algebra $\mathcal{N}$.
\end{proof}

{\bf Example.} If  $\F$ has cardinality $2$, the conclusion of Theorem \ref{pDerived} is not true.  Let us look at a commutative nilpotent 3-dimensional algebra $A$ over $\Z/2\Z$ given by
\[
A=\langle a,b|\, a^2=b^2=0\rangle,
\]
Any operation (\ref{eStar}) satisfying the hypotheses of Theorem \ref{pDerived} becomes $a\circ b=a+b+ab$. It is easy to check that with respect to this operation, $a$ and $b$ generate the Klein's Viergruppe $\{0,a,b,a+b+ab\}$, the multiplication by $0$ and $1$ does not change this. So we cannot get $a\%b=ab$ as  a derived operation (and cannot recover the whole algebra, consisting of 8 elements).

\subsection{Operations defined by power series}\label{ps}

The BCH formula (\ref{BCH}) is given by an infinite power series, which, provided $\ch (\F) = 0$, can
be applied to a nilpotent Lie algebra $L$ of arbitrary class $c$ if one trims
it to the Lie polynomial of degree $c$.
Similarly one can treat formula (\ref{bino}) as a power series (neither associative nor Lie in general) and apply its trimmed version $\circ_c$ to an arbitrary nilpotent algebra of class $\le c$.

Assume now that $A=\bigoplus_{i=1}^{\infty} A_i$ is a graded algebra with $(A_i)(A_j)\subset A_{i+j}$ for $i,j =1,2, \dots$. For example, arbitrary
relatively free algebra $A=\cF(X)$ over an infinite field $\F$ is graded by degrees of monomials in $X$; see \cite{Ba}, Theorem 4.2.4.

So under the assumptions of Theorem \ref{pDerived}, we are able to obtain the
basic operations $+_c$ and $\times_c$ as derived from $\circ$ and the scalar multiplications in every nilpotent factor algebra $A/A^{c+1}$. When one changes $c$, these formulas interact in the same way as in the BCH-formula (\ref{BCH}), that is, are truncations of a single infinite series. More precisely, the following is true.

\begin{prop} \label{conn} Under the assumptions of Theorem \ref{pDerived}, $+_{c}$ and $\times_{c}$ coincide as derived operations with $+_{c-1}$ and $\times_{c-1}$, respectively, on nilpotent algebras of class $c-1\ge 1$. Moreover, there exist a natural number 
$k=k(c)$ and the operations $\#_1,\dots\#_k, \bullet_1,\dots,\bullet_k$, derived from $+_{c}$ and the scalar
multiplications, such that
\begin{equation}\label{am}
a +_{c}b = (((a+_{c-1} b)\#_1(a \bullet_1 b))\#_2\dots )\#_k(a \bullet_k b),
\end{equation}
where each application of $\#_i(a \bullet_i b)$ does not change the value of
the preceding prefix of this formula in the nilpotent algebras of class $\le c-1$.
The same statement (but with different auxiliary operations $\#_1,\dots, \bullet_l)$ relates the operations  $\times_{c}$ and $\times_{c-1}$.
\end{prop}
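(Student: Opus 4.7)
The plan is to establish both claims by reading off the explicit structure of the inductive construction of $+_c$ and $\times_c$ produced in the proof of Theorem \ref{pDerived}. Recall that in that proof, $+_c$ is built from $+_{c-1}$ by a finite sequence of correction steps of the form $(\textrm{prefix}) \,\&\, h$, where the combination operation $\&$ has the form $x \,\&\, y = x + y + g(x,y)$ with $g$ a polynomial lying in the ideal $\mathcal{N}^c$ of the free algebra (each monomial of $g$ has total degree at least $c$) and simultaneously in $\mathcal{N}^2$ (no linear part), and where $h$ is itself chosen as a polynomial in $\mathcal{N}^c$, such as $h = -g(a,b) - g(a+b, 0)$. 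An entirely parallel pattern delivers $\times_c$ from $\times_{c-1}$ with different but structurally analogous ingredients.

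For the first claim, let $A$ be an algebra of class at most $c-1$, so that $A^c = \{0\}$. Any polynomial in $\mathcal{N}^c$ evaluates to zero on $A$, so every correction element $h$ vanishes identically on $A$. Furthermore, $x \,\&\, 0 = x + g(x,0)$ as a polynomial identity, and $g(x,0)$ is again a polynomial in $\mathcal{N}^c$, hence zero on $A$. Consequently every correction step acts as the identity on class-$(c-1)$ algebras, and $+_c$ reduces to $+_{c-1}$ there. The identical reasoning applied to the multiplicative corrections gives $\times_c = \times_{c-1}$ on such algebras.

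For the explicit formula \eqref{am}, I would unwind the finite sequence of correction steps comprising the passage from $+_{c-1}$ to $+_c$. The number of steps is determined by the combinatorics of cancelling the degree-$c$ discrepancies introduced when lifting from class $c-1$ to class $c$, yielding a well-defined integer $k = k(c)$. Each step is then written $(\textrm{prefix}) \#_i (a \bullet_i b)$, where $\#_i$ is the combination operation and $\bullet_i$ is the derived operation returning the correction element. By construction $\bullet_i$ corresponds to a polynomial in $\mathcal{N}^c$, so on class-$(c-1)$ algebras $a \bullet_i b$ evaluates to zero; the previous paragraph shows $x \#_i 0 = x$ on such algebras, so each step leaves the prefix's value unchanged, as claimed. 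The corresponding formula for $\times_c$ is obtained by parallel unwinding of the multiplicative construction, with different auxiliary operations sharing the same two structural properties.

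The main obstacle, and in fact the only nontrivial verification, is confirming that each combination operation $\#_i$ appearing in the construction really does act as the identity when its right-hand argument is zero on class-$(c-1)$ algebras. This reduces to the bookkeeping assertion that the polynomial $g(x,y)$ governing $\#_i$ at each inductive stage lies in $\mathcal{N}^c \cap \mathcal{N}^2$, so that $g(x,0)$ is again in $\mathcal{N}^c$ and hence vanishes on class-$(c-1)$ algebras. Maintaining this invariant throughout the induction is routine but requires attention at every stage; once it is in place, the proposition follows immediately by assembling the telescoping identity \eqref{am} out of the correction steps.
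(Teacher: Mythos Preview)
Your proposal is correct and follows essentially the same approach as the paper. The paper's own proof is extremely brief, simply pointing back to the last paragraph of the proof of Theorem \ref{pDerived} and noting that $a +_{c} b = (a +_{c-1} b) +_{c-1} h$ with $h$ a derived polynomial of degree $c$; your write-up is a faithful and more detailed elaboration of exactly this observation (and your worry about $g(x,0)\in\mathcal{N}^c$ is automatically satisfied once $g\in\mathcal{N}^c$, so no extra invariant needs to be maintained).
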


\begin{proof} The statement directly follows from the proof of Theorem \ref{pDerived}. For instance (see the last paragraph there),
$a +_{c} b = (a+_{c-1} b)+_{c-1} h$, where $h$ is a polynomial of degree $c$
derived from $\circ_c$ and the scalar multiplications.
\end{proof}

Proposition \ref{loo} defines the quasigroup $Q(A)$ for power series (\ref{bino}) and every graded algebra $A$, but the elements of $Q(A)$ are power series in this case. Proposition \ref{conn} shows that the basic operations in the algebra of power series can be restored, but the formula (\ref{am}) must
be extended by the obvious induction on $c$ to an infinite one.

\subsection{Anticommutative algebras}\label{ssRW}

Assume now that the operation (\ref{bino}) is defined in the variety $\mathcal{V}$
of anti-commutative algebras given by the identity $x^2=0$ or in a
subvariety of $\mathcal{V}$ (for example in the variety of Lie algebras), over a field $\F$. Then $a\circ 0 = 0\circ a =a $ in a nilpotent algebra $A\in \mathcal{V}$, i.e. zero element of $A$ is neutral in the quasigroup $Q(A)$. Furthermore, $a\circ (-a)= (-a)\circ a =0$, that is, there is a two-sided inverse element for every element of $Q(A)$.
In other words, $Q(A)$ is a {\it loop} with respect to the operation $\circ$.

It is obvious that $na\circ ma = (n+m)a$ for arbitrary $n, m \in \F$, i.e. every element of $Q(A)$ is contained in a subgroup (with the same neutral element) isomorphic to the additive group of $\F$. Thus,
$Q(A)$ is a power-associative loop.

\begin{df} Let $L$ be a power-associative loop. We call $L$ \emph{divisible} if for any $a\in L$ and any natural number $n$ there is $x\in L$ such that $x^n=a$. If such $x$ is unique, we call $L$ \emph{uniquely divisible}.
\end{df}

Obviously, for $\alpha\in \F \backslash\{0\}$, the equation $\alpha x = a$ has a unique solution $\alpha^{-1}a$. Therefore if $\ch\; \F =0$, then $L=Q(A)$ is a (uniquely) divisible loop. Passing to the multiplicative notation,
we have a unique $n$-th root of any $a\in L$ for every integer $n\ne 0$. In particular, $L$ is a torsion free loop.

If $\F=\mathbb Q$, it follows that the power $a^q$ is well defined for rational $q=s/t$, and in additive notation, it is equal to  $qa\in A$. However $a^q$
is a solution of the equation $x^t = a^s$. Therefore the multiplication by scalars in $A$ can be expressed in pure loop terms, and Theorem \ref{pDerived}
implies

\begin{theorem}\label{overQ} Let $A$ be a rational nilpotent of class $\ge 2$ anticommutative algebra. Let $Q(A)$ be the loop obtained from $A$ by the circle operation (\ref{eStar}), where $k\ne l$. Then $Q(A)$ is a uniquely divisible nilpotent loop.  

The original operations  $a+b$ and $ab$ can be uniquely reconstructed as derived operations from the circle operation alone.
\end{theorem}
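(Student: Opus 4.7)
My plan is to derive Theorem \ref{overQ} by combining the preliminary observations in Section \ref{ssRW} with Theorem \ref{pDerived}, after adapting the latter to the anticommutative setting and showing that scalar multiplication can be eliminated as a separate primitive.

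First I verify that $Q(A)$ is a uniquely divisible nilpotent loop. Since $A$ is anticommutative ($x^2 = 0$ identically), every nonassociative monomial of degree $\geq 2$ in a single variable vanishes, so the polynomial $f(x,y)$ in (\ref{bino}) depends on both variables; Proposition \ref{loo} then gives the loop structure. The paragraph preceding the theorem statement already establishes $(-a) \circ a = a \circ (-a) = 0$ and $na \circ ma = (n+m)a$ for $n, m \in \mathbb{Q}$, so $Q(A)$ is power-associative and every element sits in an additive copy of $(\mathbb{Q},+)$; the equation $x^n = a$ rewrites as $nx = a$ with the unique solution $n^{-1}a$ in the rational algebra, giving unique divisibility. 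For nilpotency as a loop, I use the ideal chain $A = A^1 \supset A^2 \supset \cdots \supset A^{c+1} = 0$. Each monomial of $f(x,y)$ has total degree $\geq 2$ and involves both variables, so for $a \in A^i$ and $b \in A$ one gets $f(a,b) \in A^{i+1}$, whence $a \circ b \equiv a + b \pmod{A^{i+1}}$. This shows that each $A^i$ is a normal subloop of $Q(A)$ with $A^i / A^{i+1}$ central in $Q(A)/A^{i+1}$, so $Q(A)$ is nilpotent of class $\leq c$.

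For the reconstruction I reduce to Theorem \ref{pDerived}. The crucial observation is that every scalar multiplication by $q = s/t \in \mathbb{Q}$ is already encoded in the loop alone: since $a^n = na$ by power associativity, the equation
\[
\underbrace{x \circ \cdots \circ x}_t = \underbrace{a \circ \cdots \circ a}_s
\]
reads $tx = sa$ and has the unique solution $x = qa$ (uniqueness guaranteed by Theorem \ref{tAG}). Thus the unary maps $a \mapsto qa$ are themselves derived operations of $\circ$. An application of Theorem \ref{pDerived} then expresses $+$ and $\cdot$ as derived operations from $\circ$ and the rational scalar multiplications, hence from $\circ$ alone. Uniqueness of the reconstruction is automatic: once $+$ and $\cdot$ are realized as specific derived functions of $\circ$, the original algebra structure is determined by $\circ$.

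The main obstacle is justifying the hypotheses of Theorem \ref{pDerived} in the present setting. That theorem requires $k \neq l$, and additionally $k \neq -l$ when $m = n = 0$. Here $m = n = 0$ holds automatically ($a^2 = 0$ in $A$), and the hypothesis $k \neq -l$ was used in the general proof to distinguish the two independent monomials $ab$ and $ba$. Anticommutativity collapses this distinction, because $ba = -ab$ yields $kab + lba = (k-l)ab$, leaving a single degree-two invariant with nonzero coefficient $k - l$. I will therefore reparameterize $f(x,y)$ by replacing the pair $(k, l)$ with $(k-l, 0)$; this does not alter $\circ$ as an operation on $A$ since $A$ is anticommutative, but it restores both $k' \neq l'$ and $k' \neq -l'$, allowing direct application of Theorem \ref{pDerived}. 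This transference, while conceptually clean, is the principal technical step requiring care.
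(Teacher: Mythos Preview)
Your proposal is correct and follows essentially the same route as the paper: unique divisibility and the recovery of scalar multiplication from loop powers are exactly the remarks preceding the theorem, the nilpotency argument uses the same filtration $\{A^i\}$ (the paper phrases it as an ascending central series starting from $A^c$, you as a descending one, which is equivalent), and the reconstruction is reduced to Theorem~\ref{pDerived}. Your reparameterization $(k,l)\mapsto(k-l,0)$ to handle the hypothesis $k\ne -l$ when $m=n=0$ is a legitimate and necessary step that the paper leaves implicit; since the two polynomials define the same operation on any anticommutative $A$, the derived formulas produced by Theorem~\ref{pDerived} for the reparameterized operation specialize correctly.
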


\begin{proof}
If $A$ is nilpotent of class $c$, then we can see from (\ref{bino}) that every
element $z\in A^c$ commutes in the loop $L$  with every element $a\in L$.
We also have $z\circ(a\circ b)=(z\circ a)\circ b=a\circ (z\circ b)$ for $a,b\in L$. Therefore $L_1=A^c$ is a central subgroup in $L$. Hence it is normal in $L$,
and the factor loop $L/L_1$ is well defined and consists of cosets modulo
$A^c$ (see \cite{BRU}). By induction, one obtains an ascending central series
${0}\le L_1\le L_2\le\dots \le L_c =L$, and so $L$ is a nilpotent loop. 

The proof of the second claim follows from the remarks preceding the theorem.
\end{proof}

The most important and well-known example of the correspondence of the type $A\leftrightarrow Q(A)$ was found by A.I. Mal'cev as the correspondence between rational nilpotent Lie algebras
and  divisible torsion free nilpotent groups in \cite{M}. Now it can be
formulated as follows.

\begin{theorem}\label{mal} \emph{[Malcev Correspondence]} For every rational nilpotent Lie algebra $A$, the operation $\circ$ defined by (\ref{BCH}) converts $A$ into  a divisible torsion free nilpotent group $\cG(A)$, of the same nilpotency class. At the same time, one can convert any torsion free nilpotent divisible group $G$, into a rational nilpotent Lie algebra $\cL(G)$, where the scalar multiplication is $qa = a^q$ ($q\in \mathbb Q$) and the addition and multiplication are the derived operations, obtained in Theorem \ref{overQ}. Moreover, for a group $G$ converted into a Lie algebra $\cL(G)$, we have $\cG(\cL(G))=G$ and for a Lie algebra $A$ converted into a group $\cG(A)$, we have $\cL(\cG(A))=A$. In both cases, not only the sets but also the operations are the same.$\;\Box$
\end{theorem}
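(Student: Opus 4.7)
The plan is to leverage Theorem \ref{overQ} for the algebra-to-group direction and to introduce an auxiliary associative embedding for the group-to-algebra direction. The BCH formula (\ref{BCH}) has quadratic part $\tfrac12(xy-yx)$, so in the notation of (\ref{eStar}) we have $k=\tfrac12$, $l=-\tfrac12$, and $m=n=0$ automatically because a Lie algebra is anticommutative. Since $k\ne l$, Theorem \ref{overQ} immediately gives that $\cG(A)$ is a uniquely divisible nilpotent loop. To upgrade the loop to a group I would establish associativity by working in the free associative $\mathbb Q$-algebra on three generators $x,y,z$ modulo monomials of degree greater than $c$: there $(\exp x\cdot\exp y)\cdot\exp z = \exp x\cdot(\exp y\cdot\exp z)$ is automatic, and applying $\log$ yields $(x\circ y)\circ z = x\circ(y\circ z)$. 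The identity element is $0$ (from $x\circ 0=x$) and the inverse of $x$ is $-x$ (from $\exp x\cdot\exp(-x)=1$). Matching of the nilpotency class follows because $x\circ y\circ(-x)\circ(-y)\equiv (x,y)\pmod{A^3}$, so by induction the $k$th term of the lower central series of $\cG(A)$ equals $A^k$.

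For the reverse direction, unique divisibility yields a well-defined $\mathbb Q$-action $qa=a^q$. The derived operations isolated in Theorem \ref{overQ}---the polynomial expressions in $\circ$ and $q\cdot$ that reconstruct addition and Lie bracket whenever $G=\cG(A)$---produce candidate operations $\oplus$ and $[\cdot,\cdot]$ on $G$. The main obstacle is to verify that $(G,\oplus,[\cdot,\cdot])$ satisfies the Lie axioms, because Theorem \ref{overQ} only guarantees recovery when $G$ is already known to arise from a Lie algebra. To break the apparent circularity I would invoke a Malcev/Magnus-style embedding realizing $G$ inside the group of units of a nilpotent associative $\mathbb Q$-algebra; in that ambient algebra $\log$ is a bijection from $G$ onto a nilpotent Lie subalgebra under the commutator $[u,v]=uv-vu$, and pulling the addition and commutator back via $\log$ endows $G$ with Lie algebra operations. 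Uniqueness of the polynomial expressions from Theorem \ref{overQ} then forces these pulled-back operations to coincide with $\oplus$ and $[\cdot,\cdot]$.

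Finally, $\cL(\cG(A))=A$ is immediate from the uniqueness clause of Theorem \ref{overQ}: the derived operations are precisely the original addition and multiplication of $A$. For $\cG(\cL(G))=G$, both objects share the same underlying set and $\mathbb Q$-action, and the group law on $\cG(\cL(G))$ is the BCH polynomial applied to $\oplus$ and $[\cdot,\cdot]$ of $\cL(G)$. Since $\oplus$ and $[\cdot,\cdot]$ are by construction the inverse derived operations to $\circ_G$ in the sense of Theorem \ref{overQ}, re-applying the BCH polynomial recovers $\circ_G$, as required. The hardest conceptual step is the Lie-axiom verification for $\cL(G)$; everything else reduces to bookkeeping on top of the reconstruction machinery of Theorem \ref{overQ}.
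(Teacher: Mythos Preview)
The paper does not give a proof of this theorem: the statement ends with $\Box$ and is immediately followed by a discussion attributing the result to Mal'cev \cite{M}, with the inverse formulas credited to \cite{Laz,Gl,St} and a pointer to \cite{KHU} for a modern account. So there is nothing to compare your argument to inside the paper; what you have written is an actual proof sketch where the paper simply quotes the literature.

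Your algebra-to-group direction is standard and correct; the associativity-via-$\exp$ argument in a truncated free associative algebra is exactly how (\ref{BCH}) is shown to define a group in \cite{Bou}. The step ``$\gamma_k(\cG(A))=A^k$'' needs a two-sided induction (the congruence $[x,y]_{\circ}\equiv (x,y)\pmod{A^3}$ gives one inclusion modulo the next term, and one then argues top-down from $A^c$), but this is routine.

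The substantive issue is in the group-to-algebra direction. You correctly identify that Theorem~\ref{overQ} only reconstructs $+$ and $[\,\cdot\,,\cdot\,]$ when $G$ is already known to be $\cG(A)$ for some Lie algebra $A$, and you propose to break the circularity by embedding $G$ into the unit group of a nilpotent associative $\mathbb Q$-algebra and transporting the Lie structure through $\log$. That is a legitimate strategy, but producing such an embedding for an \emph{arbitrary} torsion-free divisible nilpotent group is precisely the nontrivial content of the classical theorem you are trying to prove (Mal'cev's original argument, Hall--Jennings, or Lazard's approach). If you are happy to import that embedding as a black box from \cite{M,Laz,KHU}, your outline closes; if you intend a self-contained proof, this is where the real work still lies, and the phrase ``Malcev/Magnus-style embedding'' hides it.
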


The original formulation did not say about the way of reconstructing $A$
from $\cG(A)$. But it was not a big secret, and the formula $x+y = xy[x,y]^{-1/2}\dots$ can be already seen  in \cite{Laz,Gl,St}. Here $[x,y]$ is the commutator
in a nilpotent torsion free, divisible group $G$, the addition is the
operation in $L(G)$, and $\dots$ stand for the product of rational powers of longer group commutators. A number of further factors in the formulas for $+$ and $\times$
in the nilpotent Lie algebra $L(G)$ were computed in \cite{CdGV}.  

An important paper  is \cite{Laz}. In this paper, the author deals also with the case of $p$-groups and Lie algebras over the fields of positive characteristic.  A modern presentation of BCH-operation and the inverse formulas
is given in \cite{KHU}. For the convenience of the reader,  we will give more precise statement of these and other properties of Malcev correspondence in an auxiliary section \ref{sFDMC}.

\subsection{More attention to polynomial quasigroups!}

The quasigroup $Q(A)$ depends on the choice of the nilpotent algebra $A$ in some variety of algebras over a field $\F$ and on the polynomial operation (\ref{eStar}). Even under assumptions of Theorem \ref{pDerived}, different polynomial operations $\circ$ and $*$ on the same algebra $A$ can define isomorphic quasigroups
$Q(A,\circ)$ and $Q(A,\star)$.

For example, consider a free anti-commutative  algebra $A$
of nilpotency class 2 with $r$ free generators $a_1,\dots, a_r$
and two operations
\[
x\circ y = x+y +xy \mbox{ and }\;x\star y = x+y +k xy,\mbox{ where }k\in \F\backslash\{0\}.
\]
It is easy to check that both $Q(A,\circ)$ and $Q(A,\star)$ are
divisible nilpotent groups of class two, and they are isomorphic
under the following mapping defined on $A$, where the coefficients $\lambda_i$ and $\mu_{ij}$ are arbitrary elements of $A$:
\[
 \sum_{i=1}^r \lambda_i a_i+\sum_{1\le i<j\le r}\mu_{ij} a_ia_j\mapsto \sum_{i=1}^r \lambda_i a_i+k \sum_{1\le i<j\le r}\mu_{ij} a_ia_j.
 \]

Similarly one can modify the BCH formula (\ref{BCH}),
multiplying every homogeneous summand of degree $d$
by $k^{d-1}$. In general, if $\vp:Q(A,\circ)\to Q(A,\star)$
is a quasigroup isomorphism, i.e., $\vp(x)\star \vp(y)=\vp(x\circ y)$, then we have $x\circ y = \vp^{-1}(\vp(x)\star\vp(y))$. Therefore choosing an appropriate bijections $\vp$ on $A$ and applying the last formula to an operation $\star$, one can produce new operations with isomorphic quasigroup $Q(A)$.

Groups are too good to occur frequently among polynomial quasigroups $Q(A)$. However other polynomial quasigroups and loops appear in the literature very seldom in comparison with
groups and their correspondence to Lie algebras mentioned in Theorem \ref{mal}. Just few examples of this kind are mentioned in Subsection \ref{Ifq}. Let us consider one more rather particular example.

Let $\circ$ be the operation $a\circ b = a+b+ab$ on a
nilpotent Lie algebra $A$. It is easy to see that $Q(A)$ is now a power-associative nilpotent loop. Moreover, one can easily check the law $x\circ (y\circ x)=(x\circ y)\circ x$ in this loop.
Further computations give a commutator identity. Namely, denote
by $[a,b]_{\circ}$ the commutator $(a\circ b)\circ(a^{-1}\circ b^{-1})$. (Here we use multiplicative notation for the loop $Q(A)$.) Then the commutator $[[x,y]_{\circ},[x^{-1},y^{-1}]_{\circ}]_{\circ}$ is always the neutral element of this loop. Thus we obtain an identity which is not a consequence of the associativity.

Here we do not obtain neither the defining laws nor the structures for the class of polynomial loops arising due to this particular correspondence. Our goal is just to convince the reader
that polynomial quasigroups and loops form a large and interesting class of algebraic structures connected to algebras They deserve to be studied closely, and there is a hope that many important
subclasses of nilpotent algebras and polynomial operations
will be found, where the theory generalizes in the spirit of what
is known for the BCH-correspondence between nilpotent (or graded, see subsection \ref{ps}) Lie algebras and groups.


In this part, we still try to work with the general nilpotent algebras, wherever possible. At the same time, the applications mostly deal with Lie algebras and nilpotent groups. For the reader's convenience, we provide the details of now classical results on divisible groups and Malcev correspondence, which we briefly discussed in Section \ref{ssRW}.

\section{Applications and related topics}\label{sFDMC}

In the next auxiliary section we provide some known facts for the reader's convenience.

\subsection{Divisible hulls of  nilpotent groups}\label{ssDNG}
To apply Malcev Correspondence to arbitrary finitely generated nilpotent groups,
we first recall some known facts about the embedding of torsion free nilpotent
groups in divisible nilpotent groups.

A subgroup $B$ of a group $A$ is called \textit{isolated} if for any natural $n$ and any $a\in A$ such that $g^n\in B$ it follows that $a\in B$. 

A (multiplicative) torsion free group $H$ is called a {\it divisible hull} of a group $G$ if $H$ is a divisible group, containing $G$ as a subgroup and for every $h\in H$, there is $n>0$ such that $h^n\in G$.
Typical examples are ${\mathbb Q}^n$ and the group of upper unitriangular
matrices $UT(n, {\mathbb Q})$ as divisible hulls for the additive group
${\mathbb Z}^n$ and the group $UT(n, {\mathbb Z}) $, respectively.

The following theorem due to A. I. Malcev \cite{M} is now classical. In the  book \cite[\S 67]{K53} A. G. Kurosh calls Malcev's Theorem ``the main theorem in the whole theory of torsion-free nilpotent groups''.   The original topological treatment in \cite{M} was later complemented by the algebraic approaches in \cite{Laz} and also in \cite{SH}. For an up-to-date treatment see \cite[\S 9.3]{KHU}

\begin{lemma}\label{hull}
For every torsion free nilpotent group $G$, there is a
divisible hull, denoted $\sqrt G$. It is unique, in the sense that every isomorphism of torsion free nilpotent groups $G_1\to G_2$ uniquely extends to an isomorphism $\sqrt {G_1} \to \sqrt {G_2}$.

If  $G$ is finitely generated, then the group $\sqrt G$ has finite rank, i.e. there is an integer $r$ such that every finite subset of $\sqrt G$
is contained in an $r$-generated subgroup of $\sqrt G$.
\end{lemma}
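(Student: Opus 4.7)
The plan is to establish existence by induction on the nilpotency class $c$, derive the uniqueness statement from the fact that root extraction is unique in torsion-free nilpotent groups, and handle the finite rank assertion via a Mal'cev basis.

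\textbf{Existence.} For $c = 1$, the group $G$ is torsion-free abelian, so set $\sqrt{G} := G \otimes_{\Z} \Q$, a divisible torsion-free abelian hull. For $c \geq 2$, one uses the standard fact that the center $Z(G)$ of a torsion-free nilpotent group is isolated, hence $G/Z(G)$ is torsion-free nilpotent of class $c - 1$. By induction, divisible hulls $\sqrt{Z(G)}$ and $\sqrt{G/Z(G)}$ exist, and $\sqrt{G}$ can be constructed as a central extension of $\sqrt{G/Z(G)}$ by $\sqrt{Z(G)}$, the defining $2$-cocycle of $G$ being extended to rational values. A more concrete route, which is likely the cleanest, is to invoke Mal'cev's embedding of finitely generated torsion-free nilpotent groups into $UT_n(\Z)$ and define $\sqrt{G}$ as the isolator of $G$ in $UT_n(\Q)$; for arbitrary $G$, pass to a direct limit over finitely generated subgroups.

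\textbf{Uniqueness and extension of isomorphisms.} The key intermediate lemma is that in a torsion-free nilpotent group, $n$-th roots are unique: if $x^n = y^n$ with $n \neq 0$, then $x = y$. This is proved by induction on class, using that $Z(G)$ is isolated and that uniqueness already holds in $G/Z(G)$, together with commutator collection modulo deeper central terms. Granted this, every $h$ in a divisible hull $H \supseteq G$ is determined by the element $h^n \in G$ for any $n$ with $h^n \in G$. An isomorphism $\varphi: G_1 \to G_2$ therefore extends uniquely to $\sqrt{\varphi}: \sqrt{G_1} \to \sqrt{G_2}$ by sending $h$ to the unique $n$-th root of $\varphi(h^n)$; multiplicativity is checked by choosing a common $n$ for $h_1$, $h_2$ and $h_1 h_2$.

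\textbf{Finite rank for finitely generated $G$.} A finitely generated torsion-free nilpotent group $G$ admits a Mal'cev basis $g_1, \dots, g_r$: every $g \in G$ has a unique normal form $g_1^{m_1} \cdots g_r^{m_r}$ with $m_i \in \Z$. Uniqueness of roots upgrades this to a unique normal form $g_1^{q_1} \cdots g_r^{q_r}$ with $q_i \in \Q$ for each element of $\sqrt{G}$. Given any finite subset $S \subset \sqrt{G}$, let $N$ be a common denominator of the $q_i$ appearing in the normal forms of elements of $S$; then $S$ lies in the subgroup generated by the $N$-th roots $g_1^{1/N}, \dots, g_r^{1/N}$, which is $r$-generated, establishing finite rank with the Hirsch number of $G$ as the bound.

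The hardest point is the inductive step in existence: extending the $2$-cocycle on $G/Z(G)$ to one on $\sqrt{G/Z(G)}$ with values in $\sqrt{Z(G)}$ in such a way that neither torsion is introduced nor nilpotency broken. This is precisely why the concrete embedding into $UT_n(\Q)$, or equivalently the route through the Mal'cev correspondence with a rational Lie algebra via Theorem~\ref{mal}, is preferable: inside $UT_n(\Q)$ divisibility, torsion-freeness and nilpotency of the same class are all manifest, and the finite rank bound in the finitely generated case becomes essentially automatic.
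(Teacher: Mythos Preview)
The paper does not actually prove this lemma: it is stated as a classical result due to Mal'cev, with references to \cite{M}, \cite{Laz}, \cite{SH}, and \cite[\S 9.3]{KHU} for the details, and is placed in a section explicitly introduced as ``known facts for the reader's convenience''. So there is no ``paper's own proof'' to compare against; your sketch is a self-contained outline of the standard argument that those references carry out.

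Your outline is sound, but one step is more delicate than you indicate. In the uniqueness/extension paragraph you write that multiplicativity of $\sqrt{\varphi}$ ``is checked by choosing a common $n$ for $h_1$, $h_2$ and $h_1h_2$''. In a non-abelian group $(h_1h_2)^n$ is not $h_1^n h_2^n$, so knowing $\varphi(h_1^n)$, $\varphi(h_2^n)$, $\varphi((h_1h_2)^n)$ does not immediately give $\sqrt{\varphi}(h_1)\sqrt{\varphi}(h_2)=\sqrt{\varphi}(h_1h_2)$ by a one-line root comparison. The usual fix is an induction on the nilpotency class (reduce modulo the isolated centre, use the inductive hypothesis there, and handle the central discrepancy), or---as you yourself suggest at the end---to bypass the issue entirely by working inside $UT_n(\Q)$ or through the Lie-algebra side, where the extension is simply $\otimes_{\Z}\Q$ on the Mal'cev coordinates and homomorphism is automatic. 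Either route is fine; just be aware that the naive ``pick a common $n$'' does not close the argument on its own.

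A minor circularity to watch: you invoke Theorem~\ref{mal} as one possible route to existence, but in the paper both Theorem~\ref{mal} and Lemma~\ref{hull} are quoted without proof as background, so in an independent write-up you would need to make sure the Mal'cev correspondence you rely on is established without already assuming the divisible hull.
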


There is an extensive array of literature devoted to the divisible groups and their generalizations (see a survey \cite{MR}). Divisible groups are a particular case of $A$-groups, where $A$ is an associative ring with 1. In these groups one can raise any element of the group to the power equal to an element of $A$, with natural axioms satisfied. Thus any group is a $\ZZ$-group while a divisible group is a $\QQ$-group. If this terminology is used then the isolated subgroups in divisible groups are called $\QQ$-subgroups. This terminology is used in \cite{KHU}, to which we give several references in the sections that follow.

\subsection{Commensurators of nilpotent groups}\label{acng}

Many groups are saturated with the subgroups of finite index. In a number of situations in Mathematics,  it is vital to consider not only the automorphisms of a given group $G$ but also the isomorphisms between the subgroups of finite index in $G$. Such isomorphisms are called a {\it virtual automorphisms} of $G$. We refer to the classical work \cite{Ma} of G. A. Margulis on discrete subgroups of Lie groups. For more references related to the concept we treat in this section, see, for example, \cite{BB} and \cite{DK}. 

Given another pair of subgroups of finite index $H'$ and $K'$ and a virtual isomorphism $\psi:H'\to K'$, the product of these partial mappings $\vp\psi$ is defined and its domain and range also have finite indices.
We say that $\vp$ and $\vp'$ are \textit{equivalent}, $\vp\sim\vp'$,  if there is a subgroup $L$ of finite index both in $H$ and $H'$ such that the restrictions of $\vp$ and $\vp'$ to $L$ are equal. Clearly, $\sim$ is an equivalence relation, actually a congruence, that is, if $\vp\sim\vp'$ and  $\psi\sim\psi'$ then $\vp\psi\sim\vp'\psi'$. As a result, the set of the congruence classes $\mathrm{Comm}(G)$ is a group called the  \textit{commensurator} of the group $G$.

For example, it is easy to check that $\mathrm{Comm}({\mathbb Z})$ is isomorphic
to the multiplicative group of rational numbers. The commensurator  of an arbitrary torsion free nilpotent group can be described as follows.

\begin{theorem} \label{pCom2} If $G$ is a finitely generated torsion free nilpotent group, then the  commensurator $\mathrm{Comm}(G)$ is isomorphic to the automorphism group $\Aut L$, where $L = \cL(\sqrt G)$ is the rational Lie algebra corresponding to the divisible hull $\sqrt G$ of $G$ in Theorem \ref{mal}.
\end{theorem}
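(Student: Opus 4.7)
The plan is to construct the isomorphism $\Phi: \mathrm{Comm}(G) \to \Aut{L}$ in two steps: first lift every virtual automorphism of $G$ to an automorphism of the divisible hull $\sqrt G$ via Lemma \ref{hull}, then transport it to $L$ through the Malcev correspondence of Theorem \ref{mal}. The key preparatory fact is that every subgroup $H$ of finite index in $G$ satisfies $\sqrt H = \sqrt G$ inside $\sqrt G$: passing to the normal core $H_0 = \bigcap_{g\in G} gHg^{-1}$, which has some finite index $n$ in $G$, one has $g^{n}\in H_0\subseteq H$ for every $g\in G$, hence $G\subseteq \sqrt H \subseteq \sqrt G$ and equality follows.

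With this in hand, a virtual automorphism $\varphi: H \to K$ extends, by the uniqueness part of Lemma \ref{hull}, to a unique isomorphism $\sqrt H \to \sqrt K$, which is an automorphism $\tilde\varphi \in \Aut{\sqrt G}$. Theorem \ref{mal} identifies the operations of $\sqrt G$ and of $L$ as mutually derived, so $\Aut{\sqrt G}$ and $\Aut L$ coincide as abstract groups, and this identification carries $\tilde\varphi$ to an element $\Phi([\varphi]) \in \Aut L$. The map is well defined on congruence classes, since equivalent $\varphi, \varphi'$ agree on a finite-index subgroup $M$ with $\sqrt M = \sqrt G$, and uniqueness of the extension to $\sqrt G$ forces the two induced automorphisms to coincide. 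Multiplicativity is a similar consequence of uniqueness, and injectivity is immediate: if $\tilde\varphi = \mathrm{id}_{\sqrt G}$ then $\varphi = \mathrm{id}_H$, so $[\varphi]$ is trivial in $\mathrm{Comm}(G)$.

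The main obstacle is surjectivity. Given $\beta \in \Aut L$ and its image $\tilde\beta \in \Aut{\sqrt G}$, I would set $H := G \cap \tilde\beta^{-1}(G)$ and must show that $H$ has finite index in $G$. Both $G$ and $\tilde\beta^{-1}(G)$ are finitely generated torsion-free nilpotent subgroups of $\sqrt G$ with divisible hull $\sqrt G$, so the subgroup $G' := \langle G \cup \tilde\beta^{-1}(G)\rangle$ they generate in $\sqrt G$ is likewise finitely generated torsion-free nilpotent, with divisible hull $\sqrt G$. Consequently $G$, $\tilde\beta^{-1}(G)$ and $G'$ share the same Hirsch length (equal to the rank of $\sqrt G$); by the classical fact that a subgroup of full Hirsch length in a finitely generated torsion-free nilpotent group has finite index, both $G$ and $\tilde\beta^{-1}(G)$ have finite index in $G'$, and hence so does their intersection $H$. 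Restricting $\tilde\beta$ to $H$ then yields a virtual automorphism whose unique extension to $\sqrt G$ is $\tilde\beta$ itself, so $\Phi([\tilde\beta|_H]) = \beta$, completing the proof.
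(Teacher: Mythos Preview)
Your proof is correct and follows essentially the same route as the paper: extend a virtual automorphism to $\sqrt G$ via the uniqueness clause of Lemma \ref{hull}, identify $\Aut{\sqrt G}$ with $\Aut L$ through Theorem \ref{mal}, and handle surjectivity via a Hirsch-length argument. The only cosmetic difference is in the surjectivity step: the paper works directly with $K=G\cap\alpha(G)$ and shows every element of $\sqrt G$ has a power in $K$ (then invokes subnormality/polycyclicity and Hirsch rank), whereas you pass to the overgroup $G'=\langle G,\tilde\beta^{-1}(G)\rangle$ and compare Hirsch lengths there---same invariant, same conclusion.
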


\begin{proof} Let $\vp: H\to K$ be a virtual automorphism of $G$.
Since $H$ and $K$ have finite indices in $G$, it follow from the definition
of the divisible hull and Lemma \ref{hull} that $\sqrt G= \sqrt H=\sqrt K$. Therefore by Lemma \ref{hull}, the isomorphism $\vp$ extends to a unique automorphism $\bar\vp$
of the group $\sqrt G$.

Since equivalent virtual automorphisms $\vp$ and $\vp'$ coincide on a subgroup $H''$ of finite index in $G$, it follows (again by Lemma \ref{hull}) that the above-mentioned  extension mapping can be treated as a well defined function $f: \mathrm{Comm}(G)\to Aut(\sqrt G)$. Moreover, $f$ is a group homomorphism. For a nontrivial $\vp$, the extension $\bar\vp$ is nontrivial too, and so the mapping $f$ is injective.

Let $\alpha$ be an automorphism of $\sqrt G$. Then $\sqrt G$ is the divisible
hull of both $G$ and $\alpha(G)$. Hence for every $x\in \sqrt G$, there is
a positive integer $n$ such that $x^n\in K=G\cap\alpha(G)$. Since every subgroup
of a nilpotent group is subnormal and $G$ is polycyclic (see \cite{KM}, ch. 6),
$K$ must have finite index in $G$. This follows because otherwise the Hirsch rank of
$K$ (the number of infinite factors in a subnormal series with cyclic factors)
would be less than the Hirsch rank of $G$. In its turn, this would imply the existence
of $x\in G$ such that no nontrivial power of $x$ is in $K$, a contradiction. For the same reason, the subgroup $H=\alpha^{-1}(K)$ has
finite index in $G$. Hence $\alpha$ is an extension of the isomorphism
$H \to K$. Thus, the mapping $f$ is surjective hence an isomorphism.

Recall that by the BCH-formula
(\ref{BCH}), the group operation in $\sqrt G$ is derived from the algebra operations in $L$ . It remains to note that by Theorem \ref{mal}, one may assume that the group $\sqrt G$ and the Lie algebra $L$ have the same underlying set, and all algebra operations are derived from the group
operations (including raising to rational powers). It follows that
every automorphism of $L$ is an automorphism of $\sqrt G$, and vice versa.
This completes the proof.
\end{proof}

\begin{cy} For every finitely generated nilpotent group $G$,
the commensurator $\mathrm{Comm}(G)$ is a linear algebraic group over $\mathbb Q$.
\end{cy}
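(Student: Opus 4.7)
The plan is to reduce the statement to Theorem \ref{pCom2} and then identify the relevant automorphism group as a Zariski-closed subgroup of a general linear group. Since Theorem \ref{pCom2} is stated only for torsion-free $G$, the first step is to pass to a torsion-free finite-index subgroup. Every finitely generated nilpotent group is linear over $\Zs$, so by Selberg's lemma it contains a torsion-free subgroup $G'$ of finite index. Because any virtual automorphism of $G$ restricts to a virtual automorphism of $G'$ (and conversely any virtual automorphism of $G'$ is already one of $G$), the obvious map $\mathrm{Comm}(G)\to\mathrm{Comm}(G')$ is an isomorphism. Thus I may assume $G$ is torsion-free.

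Under this assumption, Theorem \ref{pCom2} gives $\mathrm{Comm}(G)\cong\Aut{L}$, where $L=\cL(\sqrt G)$ is the rational nilpotent Lie algebra corresponding to the divisible hull of $G$ under Malcev correspondence. By Lemma \ref{hull}, $\sqrt G$ has finite rank, so $L$ is finite-dimensional over $\QQ$. Fixing a basis $e_1,\dots,e_n$ of $L$ determines structure constants $c_{ij}^{k}\in\QQ$ by $[e_i,e_j]=\sum_k c_{ij}^{k}e_k$, and identifies $GL(L)$ with $GL_n(\QQ)$.

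The group $\Aut{L}$ is then the set of matrices $X=(x_{pq})\in GL_n$ satisfying the bracket compatibility $X([e_i,e_j])=[X(e_i),X(e_j)]$, which in coordinates reads
\[
\sum_{p,q} x_{pi}\,x_{qj}\,c_{pq}^{s}\;=\;\sum_{k} c_{ij}^{k}\,x_{sk}\qquad(1\le i,j,s\le n).
\]
These are polynomial equations in the entries of $X$ with coefficients in $\QQ$. They cut out a Zariski-closed subgroup of $GL_n$ defined over $\QQ$, so $\Aut{L}$, and hence $\mathrm{Comm}(G)$, is a linear algebraic group over $\QQ$.

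The routine pieces are the coordinate description of $\Aut{L}$ and the application of Theorem \ref{pCom2}. The only genuinely subtle point, and the one I expect to require the most care in the writing, is the torsion reduction: one must verify that passing from $G$ to a torsion-free finite-index subgroup does not alter the commensurator, and spell out why Selberg's lemma (or any direct argument using the finiteness and characteristicity of the torsion subgroup of a finitely generated nilpotent group) applies in this setting.
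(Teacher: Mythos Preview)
Your proposal is correct and follows essentially the same route as the paper: reduce to the torsion-free case, invoke Theorem \ref{pCom2} to identify $\mathrm{Comm}(G)$ with $\Aut L$ for the finite-dimensional rational Lie algebra $L=\cL(\sqrt G)$, and then realize $\Aut L$ as the zero locus of the polynomial equations $\alpha([e_i,e_j])=[\alpha(e_i),\alpha(e_j)]$ inside $GL_n(\QQ)$. The only notable difference is in the torsion reduction: the paper uses Hirsch's theorem to pass to the finite-index torsion-free subgroup $G^n$ (which has the convenient feature that any isomorphism $H\to K$ automatically sends $H^n$ to $K^n$), whereas you appeal to Selberg's lemma, which works but is heavier machinery and requires you to check by hand that restricting to $H\cap G'\cap\varphi^{-1}(G')$ lands in $G'$; also, the paper spells out the passage from ``$\sqrt G$ has finite rank'' to ``$L$ is finite-dimensional'' via the abelianization $L/(L,L)$, a step you assert directly.
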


\begin{proof} For arbitrary finitely generated nilpotent group $G$, by Hirsch' theorem \cite{H}, there exists $n>0$, such that the (finitely generated) subgroup $G^n$ generated by all $n$-th powers of the elements is torsion free and has finite index in $G$.
So if $H\to K$ is a virtual automorphism of $G$ , then $H^n\to K^n$ is a virtual
automorphism of the subgroup $G^n$. This correspondence agrees with  the equivalence and the product, and so we get a homomorphism $\mathrm{Comm}(G)\to\mathrm{Comm}(G^n)$. It is injective since nonequivalent
$\vp$ and $\vp'$ cannot coincide on a subgroup of finite index. It is also surjective since every isomorphism $H\to K$, where $H$ and $K$ have finite
index in $G^n$ is equivalent to its restriction $H^n\to K^n$.

Thus, one may assume that the group $G$ is torsion free. By Lemma \ref{hull},
the group $\sqrt G$
has finite rank. It follows by the definition of the group operation in (\ref{BCH}) claim (4) that the mapping $\sqrt G\to L/(L,L)$, given by the formula $x\mapsto x+(L,L)$ is a group epimorphism. As a result, if $L$ is the rational Lie algebra $L=\cL(\sqrt G)$ then the factor algebra $L/(L,L)$ is finite dimensional. Now since $L$ is nilpotent, the entire algebra $L$ is finite dimensional.

By Theorem \ref{pCom2}, the group $\mathrm{Comm}(G)$ is isomorphic with the
group $Aut\; L$. For a  basis $(e_1,\dots,e_d)$ in a finite dimensional algebra $L$ over $\mathbb Q$,
the property that a nonsingular linear operator $\alpha: L\to L$ is an isomorphism is equivalent to the finite set of equalities $(\alpha(e_i),\alpha(e_j))=\alpha((e_i,e_j))$, which leads to a finite
system of algebraic equations imposed on the entries of the matrix of $\alpha$ with
respect to the basis $(e_1,\dots,e_d)$.
\end{proof}

{\bf Examples}.\begin{enumerate}
\item[1.] Let us explicitly produce the matrix form for the group of virtual isomorphisms  of the free 2-generator nilpotent group $G=G(x,y)$ of class 2 (the Heisenberg group). The group $\mathrm{Comm}(G)$ can be presented as the group of all nonsingular rational $3\times 3$-matrices $[a_{ij}]$ such that $a_{13}=a_{23}=0$, while  $a_{33}$ is equal to its complementary minor. This follows because this same group is the automorphism group of the free 2-generator nilpotent Lie algebra $L=L(x,y)$  of class 2. Any automorphism $\vp$ of $L$ is defined by the images of $x,y$,

\[\vp(x)=a_{11}x+a_{21}y+a_{31}(x,y), \,\vp(y)=a_{12}x+a_{22}y+a_{32}(x,y).\]
If we set $A=[a_{ij}]$ for $i,j \le 2$, then $\Delta=\det A\ne 0$. Also we set $u=[a_{31},a_{32}]$ and choose $x,y,z=xy$ as a basis of $L$. Then the matrix $[\vp]$ of $\vp$ with respect to this basis will have the form
\begin{equation}\label{eCom1}
[\vp]=\begin{pmatrix}
A&0\\u&\Delta
\end{pmatrix},
\end{equation}
where $\Delta = \det A$. Clearly, this is an algebraic group of dimension 6 over $\Q$.

\item[2.] In a similar manner one can determine the group $\mathrm{Comm}(G(m,c))$ of virtual automorphisms of  an $m$-generated free nilpotent group $G(m,c)$ of class $c$. This follows because, again, any map of the free generators of free  Lie algebra $L(m,c)$ of class $c$ to $L(m,c)$ extends to a Lie algebra homomorphism, which is an automorphism if and only it induces a nonsingular linear map on the space $L(m,c)/(L(m,c),L(m,c))$ over $\Q$. The automorphisms that are identical modulo $(L(m,c),L(m,c))$ form a nilpotent normal subgroup $N$ and $\Aut {L(m,c)}/N\cong \GL(m,\Q)$.

Next, it was proved in \cite{VML} and reproved in \cite{AM}, that the automorphism group of $UT(n,\Q)$ is generated by inner automorphisms, central automorphisms  (that is, identical modulo the center), diagonal automorphisms (conjugation by the diagonal matrices) together with the flip, which is the refection with respect to the anti-diagonal. It follows that these automorphisms generate  the group isomorphic to $\mathrm{Comm}(UT(n,\Z))$, because $UT(n,\Q)$ is the divisible
hull of the group $UT(n,\Z)$.

\item[3.]One more series of examples is provided by ``filiform'' nilpotent groups $G$ of nilpotent class $c>2$, which are semidirect product of an infinite cyclic group $\langle a\rangle$ and a free abelian group $\ZZ^c$. The matrix of the action of $a$ is  the Jordan cell with 1 on the diagonal. The divisible hull $\sqrt{G}$ is the semidirect product of $\QQ$ as the hull for  $\langle a\rangle$ and $\QQ^c$ as the hull for $\ZZ^c$. If $b$ is the generator for the $\ZZ^c$ with respect to the described action of $a$, then $\sqrt{G}$ is generated, as a divisible group, by $a$ and $b$. Mapping $a$ to any element outside $\QQ^c$ and $b$ to any element $y$ outside $(a-1)\QQ^c$, we obtain an automorphism of $\sqrt{G}$ and $\cL(\sqrt{G})$. Conversely, under every automorphism, $a$ should stay outside of an abelian group $\QQ^c,\:(c>2)$. This follows because the centralizer of $a$ has rank 2. At the same time,  $b$ should stay outside $(a-1)\QQ^c$. This completely describes $\Comm(G)$ as an algebraic group of dimension $c+1+c=2c+1$.
\end{enumerate}

\subsection{Determinants in  commensurators.}

One more fact is worth mentioning. Since the automorphism group of an algebra is a subgroup of a matrix group, every automorphism has determinant. Let $H$, $K$be subgroups of finite index in a finitely generated nilpotent group $G$ and $\vp$ is an isomorphism $\vp:H\to K$/ The $vp$ defines an automorphism of $L = \cL(\sqrt{G}$. of a , defined  with the help of the subgroups , we define its determinant $\det\vp$ as the determinant of the respective automorphism of the divisible hull $\sqrt{G}$, viewed as a Lie algebra.

\begin{theorem}\label{tComm} The absolute value of the determinant of a virtual automorphism $\vp:H\to K$ of a finitely generated torsion free nilpotent group $G$ is the ratio of the index of $K$ to the index of  $H$.
\end{theorem}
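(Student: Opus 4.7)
The plan is to extend $\vp$ to the divisible hull and reduce the determinant computation to an abelian covolume calculation along a well-chosen central series of $G$.

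First I would invoke Theorem~\ref{pCom2}: $\vp$ extends uniquely to an automorphism $\bar\vp$ of $\sqrt G$, and by definition $\det\vp$ is $\det\bar\vp$ viewed as a $\Q$-linear map on $L=\cL(\sqrt G)$. The key structural input is the lower central series $\sqrt G=\gamma_1\sqrt G\supseteq\gamma_2\sqrt G\supseteq\cdots\supseteq\gamma_{c+1}\sqrt G=1$; each term is characteristic in $\sqrt G$, hence preserved by $\bar\vp$, and each quotient $V_i:=\gamma_i\sqrt G/\gamma_{i+1}\sqrt G$ is a finite-dimensional $\Q$-vector space (the $i$-th graded piece of $L$). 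Set $\Gamma_i=G\cap\gamma_i\sqrt G$; since $\sqrt G$ is divisible torsion-free, $\sqrt G/\gamma_{i+1}\sqrt G$ is torsion-free, so $\Gamma_i/\Gamma_{i+1}$ embeds into $V_i$ as a full-rank $\Z$-lattice $\Lambda^G_i$. Putting $H_i=H\cap\Gamma_i$ and $K_i=K\cap\Gamma_i$, the inclusions $H,K\subseteq G$ give $H_i=H\cap\gamma_i\sqrt G$ and $K_i=K\cap\gamma_i\sqrt G$, so $\bar\vp(H)=K$ together with $\bar\vp(\gamma_i\sqrt G)=\gamma_i\sqrt G$ yields $\bar\vp(H_i)=K_i$. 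On $V_i$, the induced map $\bar\vp_i$ therefore carries $\Lambda^H_i:=H_i\Gamma_{i+1}/\Gamma_{i+1}$ onto $\Lambda^K_i:=K_i\Gamma_{i+1}/\Gamma_{i+1}$, both finite-index sublattices of $\Lambda^G_i$.

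Next I would combine multiplicativity of determinants with the abelian covolume formula. Because $\bar\vp$ respects the filtration, $|\det\bar\vp|=\prod_{i=1}^{c}|\det\bar\vp_i|$. For any automorphism of a rational vector space that sends one full lattice onto another, the absolute determinant equals the ratio of covolumes, and when both lattices lie inside a common lattice $\Lambda^G_i$ this ratio is $[\Lambda^G_i:\Lambda^K_i]/[\Lambda^G_i:\Lambda^H_i]$. On the group side, telescoping along $G=H\Gamma_1\supseteq H\Gamma_2\supseteq\cdots\supseteq H\Gamma_{c+1}=H$ and applying the modular-law identity $\Gamma_i\cap H\Gamma_{i+1}=H_i\Gamma_{i+1}$ yields
\[
[G:H]=\prod_i[\Gamma_i:H_i\Gamma_{i+1}]=\prod_i[\Lambda^G_i:\Lambda^H_i],
\]
and likewise $[G:K]=\prod_i[\Lambda^G_i:\Lambda^K_i]$. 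Taking the product of the per-factor identities then gives $|\det\bar\vp|=[G:K]/[G:H]$.

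The main technical point is the choice of filtration: one needs a central series of $G$ which is simultaneously (i) invariant under every Lie-algebra automorphism of $L$, so that $\bar\vp$ automatically respects it, and (ii) has torsion-free abelian factors, so that each factor appears as a genuine $\Z$-lattice inside a $\Q$-vector space rather than as an extension of a lattice by torsion. The $\Q$-lower central series of $\sqrt G$ intersected with $G$ does both at once, because $\sqrt G$ being divisible torsion-free forces every $\gamma_i\sqrt G/\gamma_{i+1}\sqrt G$ to be a $\Q$-vector space. Once the right filtration is in place, the remainder of the argument is the routine combination of multiplicativity of indices in a refined central series and the classical identity ``covolume ratio equals index ratio'' for lattices in a rational vector space.
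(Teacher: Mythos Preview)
Your argument is correct. The route, however, differs from the paper's. The paper argues by induction on the nilpotency class via the \emph{center}: it uses that $H\cap Z(G)=Z(H)$ for a finite-index subgroup $H$ of a torsion-free nilpotent $G$, so that $\vp$ restricts to an isomorphism $Z(H)\to Z(K)$ and induces one on $H/Z(H)\to K/Z(K)$; together with the index formula $[G:H]=[G/Z:HZ/Z]\,[Z:Z(H)]$ and the block-triangular shape of the automorphism with respect to $\sqrt Z\subset \cL(\sqrt G)$, this yields the result by a single inductive step. You instead fix the whole lower central filtration of $\sqrt G$ at once, observe it is characteristic (hence $\bar\vp$-stable), and reduce the determinant to a product over graded pieces, matching it against a telescoped product of lattice indices. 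Your approach avoids the special fact $H\cap Z(G)=Z(H)$ and packages the induction as a single product; the paper's approach is a cleaner two-block reduction at each stage. Both ultimately rest on the same abelian ingredient (covolume ratio equals index ratio) and on multiplicativity of determinant and index along a $\bar\vp$-invariant central series. One small point worth making explicit in your write-up is that under the Mal'cev correspondence the group lower central series of $\sqrt G$ coincides, as sets, with the Lie-algebra lower central series of $L$, so that your graded pieces $V_i$ really are the $\Q$-vector spaces on which the block determinants of $\bar\vp$ are computed; the paper handles this implicitly by passing through the center.
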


As a quick example, the map $2\Z\to 3\Z$ ($2\mapsto -3$) in the group of integers $\Z$ is given by the matrix $\left[-\frac{3}{2}\right]$, whose determinant equals $-\frac{3}{2}$.

\begin{proof} We start with a finitely generated additive torsion-free abelian group $G$. Choose a basis $( e_1,\dots,e_n)$ of $G$. The same set will be the basis over $\Q$ in the divisible hull $\sqrt{G}$ of $G$. Any basis $( h_1,\dots,h_n)$ of a subgroup $H$ of finite index in $G$ is defined by a square matrix $A$ such that $(h_1,\dots,h_n)=(e_1,\dots,e_n)A$. It is well-known by the theorem on the subgroups of free abelian groups, that $[G:H]=|\det A|$. If we have another subgroup of finite index $K$ with a basis $( k_1,\dots,k_n)$, then $(k_1,\dots,k_n)=(e_1,\dots,e_n) B$, for a square matrix $B$, and $[G:K]=|\det B|$.

Any isomorphism $H\to K$ extends to an automorphism of $\sqrt{G}$ such that the map of the bases $(h_1,\dots,h_n)\mapsto (k_1,\dots, k_n)$ is given by the formula $(k_1, \dots, k_n)=(h_1,\dots,h_n)C$, where  $C=A^{-1}B$. The determinant of this automorphism of the group $\sqrt{G} $, or the respective abelian Lie algebra, equals $\det C=(\det A)^{-1}\det B$.

It follows that $[G:K]/[G:H]=|\det B/\det A|=|\det C|$. Thus the  ratio of the indexes of subgroups of finite index under  a virtual automorphism equals the absolute value of the determinant under an automorphism of the group $\sqrt G$.

In the case of non-abelian groups, we will proceed by induction. It should be reminded that in a torsion-free nilpotent group $G$,
if the elements $a$ and $x^n\,(n\ge 1)$ commute then also $a$ and $x$ commute \cite{KM}. As a result,
the factor-group $G/Z(G)$ of $G$ by the center $Z=Z(G)$ of $G$ is also torsion-free. By the same reason, for any subgroup $H$ of finite index in $G$ we have $H\cap Z= Z(H)$, where $Z(H)$ is the center of $H$.

One more formula is useful for our inductive argument.
\begin{eqnarray}\label{eCOM1}
[G:H]&=&[G:HZ][HZ:H]=[G:HZ][Z:H\cap Z]\nonumber\\
&=& [G/Z: HZ/Z][Z:Z(H)].
\end{eqnarray}

Now we are ready to proceed with the proof.

 An isomorphism $H\to K$  of the subgroups of finite index induces an isomorphism $Z(H)\to Z(K)$. Thus we have a well-defined isomorphism
 \[ H/Z\cap H =H/Z(H)\to K/Z(K)=K/Z\cap K
 \]
  as well as an induced isomorphism $HZ/Z\to KZ/Z$. By induction, $[G:KZ]/[G:HZ]$ is the absolute value of the determinant of the Lie algebra automorphism built using the group $G/Z$ while $[Z:Z(K)]/[Z:Z(H)]$ is the absolute value of the determinant of the automorphism of a Lie algebra built using the group $Z$ and its divisible hull.

Let $\sqrt{G}$ be the divisible hull of $G$. Recall that a nontrivial power of any  element of $\sqrt{G}$ is in  $G$.  Thus the divisible hull $\sqrt{Z}$ of the center $Z$ is the center of the divisible hull of $\sqrt{G}$, $\sqrt{Z}\cap G=Z$, and $\sqrt{G}/\sqrt{Z}$ is the divisible hull of $G/Z$.

If we view a Lie algebra $\mathcal{L}$ built on the elements of $\sqrt{G}$ by Theorem \ref{mal}, then $\sqrt{Z}$ is the center (invariant with respect to $\Aut L$). Thus the matrix of any automorphism with respect to a basis complementary to a basis of the center, is of the block shape, with the square matrices $A$ and $B$ and zeros under these blocks, but not above them. Here $A$ is the matrix of the restriction to the center, while $B$ is the matrix of the  induced automorphism of ${\mathcal{L}}/ \sqrt{Z}$.

For the isomorphism $K\to  H$,  we have 
\[
|\det A|=[Z:Z(K)]/[Z:Z(H)],
\]
and at the same time,
\[
|\det B |=[G/Z:KZ/Z]/[G/Z:HZ/Z].
\]

As a result, we derive from formula (\ref{eCOM1}) that the absolute value of the determinant of the automorphism equals
\begin{eqnarray*}
|\det C|&=&|(\det A)^{-1}( \det B)|\\&=& ([Z:Z(K)]/[Z:Z(H)])([G/Z:KZ/Z]/[G/Z:HZ/Z])\\ &=&[G:K]/[G:H],
\end{eqnarray*}
which is what we needed to prove.
\end{proof}

Conversely, if we have a finite-dimensional nilpotent Lie algebra $L$ over $\Q$ then using Baker--Campbell--Hausdorff's formula makes $L$ a nilpotent divisible group. This group is a divisible hull of a finitely generated torsion-free nilpotent group. So, we can say that the automorphism group $Aut L$ is at the same time a group of virtual automorphisms  of a finitely generated torsion-free nilpotent group

\textbf{An observation}.  In the first example of
Section \ref{acng}, the determinant of the automorphism of the 3-dimensional algebra was always the square of a rational number. Hence, the ratio of indexes of isomorphic subgroups of finite index in that group is always the square of a rational number. In particular, if $H$ is a subgroup of finite index in the free nilpotent group $G(x,y)$ and $H$ is isomorphic to the whole group $G(x,y)$ then the index $|G(x,y):H|$ is  a perfect square!

\subsection{Groups of polynomial mappings}\label{GU}

Let $A$ be a nilpotent algebra of class $c\ge 1$ over a field $\F$.
On $A$, we consider polynomial mappings $f:x\mapsto a_1x+a_2x^2+\ldots$, where
$a_1\ne 0$ and dots stand here for a linear combination of (nonassociative) monomials in $x$ of degrees $d\in\{3,\dots, c\}$.
 By Remark \ref{sssMO}, $f$ is bijective on $A$, $f^{-1}$ is a polynomial mapping, and so such mappings form a group $G=G(A)$ under the composition $(fg)(x)=f(g(x))$.

Since $A$ is nilpotent and nonzero, the coefficient $a_1$ is uniquely
defined by $f$. If $g:x\mapsto b_1x+b_2x^2+\ldots$, then we see that the product $fg$
in $G$ has the form $x\mapsto a_1b_1x+\dots$, and so the mapping $f\mapsto a_1$ is a homomorphism of $G$ onto the multiplicative group $\F^*$ of the field $\F$. The functions from the kernel $U=U(A)$ have the form $x\mapsto x+ax^2+\ldots$, and $G/U\cong\F^*$. Moreover
the functions $x\mapsto ax$ form a semidirect complement to $U$ in $G$. The group $U$
is torsion free if $\ch\;\F=0$, because for a function $f:x\mapsto x+ax^k+\dots$ with $k\ge 2$ and $a\ne 0$, we have $f^n: x\mapsto x+nax^2+\dots$. The same equality implies that $U$ is a $p$-group
if $\ch\;\F= p>0$.

Some auxiliary computations will be helpful below.

\begin{lemma}\label{0} (1) Let $U\ni f: x\mapsto \vp(x)$ and
$f':x\mapsto \vp(x)+\mu(x)$, where $\mu (x)$ does not contain monomials
of degree $\le k$. Then the product $f^{-1}f'$ (and $f'f^{-1}$)
has the form $x\mapsto x+\mu(x)+\dots$, where $\dots$ does not contain
monomials of degree $\le k$.

(2) For every $f,g\in U$, the commutator
$f^{-1}g^{-1}fg$ has the form $x\mapsto x + h(x)$, where $h$
does not contain monomials of degree less than $4$.

(3) If $f: x\mapsto x+u(x)$, where $u(x)$ contains monomials
of degrees $\ge k\ge 4$ only, and $g\in U$, then the
commutator $fgf^{-1}g^{-1}$ has the form $x\mapsto x+v(x)$,
where the polynomial $v(x)$ does not contain monomials of degree
$\le k$.
\end{lemma}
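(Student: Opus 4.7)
The plan is to prove (1) first and use it to derive (2) and (3). For (1), I would write $f^{-1}(y) = y + \psi(y)$ with $\psi$ of degree $\geq 2$ and expand
\[
f^{-1}f'(x) = f^{-1}(\varphi(x) + \mu(x)) = \varphi(x) + \mu(x) + \psi(\varphi(x) + \mu(x)).
\]
Since $f^{-1}(f(x)) = \varphi(x) + \psi(\varphi(x)) = x$, the task reduces to bounding $\psi(\varphi(x) + \mu(x)) - \psi(\varphi(x))$. For each monomial of degree $d \geq 2$ in $\psi$, multilinear expansion of this difference leaves only terms containing at least one factor $\mu(x)$ (degree $\geq k+1$) together with $d-1$ other factors each of degree $\geq 1$, giving total degree $\geq d + k \geq k+2$. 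A symmetric calculation handles $f'f^{-1}(x) = \varphi(f^{-1}(x)) + \mu(f^{-1}(x)) = x + \mu(x+\psi(x))$: the error $\mu(x+\psi(x)) - \mu(x)$ again lives in degree $\geq k+2$.

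For (3), I would apply (1) to $G = gf$ and $F = fg$. Writing $g = \mathrm{id} + w$ with $w$ of degree $\geq 2$,
\[
F(x) - G(x) = \bigl(u(x+w(x)) - u(x)\bigr) - \bigl(w(x+u(x)) - w(x)\bigr).
\]
Each monomial of $u$ has degree $\geq k$, so substituting one $w$-factor (degree $\geq 2$) into a $u$-monomial produces a term of degree $\geq (k-1)+2 = k+1$, and the second bracket lives in degree $\geq 1 + k = k+1$ for the symmetric reason. Hence $F - G$ contains no monomials of degree $\leq k$, and (1) yields $FG^{-1} = fgf^{-1}g^{-1} = \mathrm{id} + v$ with $v$ of degree $\geq k+1$, which is what (3) claims.

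Part (2) requires more care, because both $f$ and $g$ may have quadratic parts. My plan is to show that $F - G = fg - gf$ has no monomials of degree $\leq 3$ and then apply (1) with $k = 3$. The degree-$2$ parts of $fg$ and $gf$ are both $\phi_2 + \psi_2$ and cancel. For degree $3$, I use the fact that the only monomial of degree $2$ in one variable is $xx$, so $\phi_2 = a(xx)$ and $\psi_2 = b(xx)$ for scalars $a,b \in \F$. The degree-$3$ contribution to $\phi(g(x)) - \phi(x)$ arises from $\phi_2(x+\psi_2(x)) - \phi_2(x)$ and equals $ab\bigl(x(xx) + (xx)x\bigr)$, while the analogous contribution from $\psi(f(x)) - \psi(x)$ is $ba\bigl(x(xx) + (xx)x\bigr)$. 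Since scalars commute in $\F$, these cancel in $F - G$, forcing $F - G$ to have degree $\geq 4$. The main obstacle is precisely this degree-$3$ cancellation: it is what upgrades the conclusion of (2) from the naive bound $\geq 3$ to the claimed bound $\geq 4$, and it depends on the one-dimensionality of the space of degree-$2$ monomials in a single variable together with commutativity of scalars in $\F$.
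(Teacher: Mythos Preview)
Your argument is correct and follows the same skeleton as the paper: prove (1) by expanding $f^{-1}(\varphi(x)+\mu(x))$ and bounding the error, then for (2) and (3) compute $fg-gf$ to the required degree and feed the result into (1). The paper's proof of (2) is the explicit-coefficient version of your abstract argument: writing $f:x\mapsto x+ax^2+bx^2x+cxx^2+\dots$ and $g:x\mapsto x+kx^2+lx^2x+mxx^2+\dots$, it computes the degree-$\le 3$ part of $fg$ as $x+(k+a)x^2+(l+ak+b)x^2x+(m+ak+c)xx^2$ and observes the symmetry under $(a,b,c)\leftrightarrow(k,l,m)$---which is exactly your observation that the cross term is $ab=ba$ times a fixed degree-$3$ polynomial, made possible because the degree-$2$ part is one-dimensional. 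One small bookkeeping point: the commutator in (2) is $f^{-1}g^{-1}fg=G^{-1}F$ rather than $FG^{-1}$, but your part (1) covers both products, so the conclusion is unaffected.
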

 \begin{proof}(1) The function $f^{-1}$ is given by $x\mapsto \lambda(x)$. Here $\lambda(x)=x+\dots$ is a polynomial such that
 $\lambda(\vp(x)) = x + \rho(x)$, where  $\rho$
 is a polynomial without monomials of degree $\le c$. It follows
 that $\lambda(\vp(x)+\mu(x)) = \lambda(\vp(x)) + \mu(x)+\dots$, as required.

 (2) For $f:x\mapsto x+ax^2+bx^2 x +c x x^2+\dots $ and $g:x\mapsto x+kx^2+lx^2 x+ m x x^2 \dots $
 we have \[fg(x)= x+kx^2+lx^2 x+ mx x^2+
 a(x+kx^2+lx^2 x+ mx x^2)^2+ \] \[
 b(x+kx^2+lx^2 x+ mx x^2)^2(x+kx^2+lx^2 x+ mx x^2)+ \] \[
 c(x+kx^2+lx^2 x+ mx x^2)(x+kx^2+lx^2 x+ mx x^2)^2+\dots =\]
 \[ x+(k+a)x^2+(l+ak+b)x^2 x+(m+ak+c)xx^2+\dots\]
 Up to the dots, we have the same expression for $gf$, because
 of its  symmetry with respect $(a,b,c)\leftrightarrow (k,l,m)$.
 Since ``$\dots$'' contains only monomials of degrees $\ge 4$,
 we have by statement (1), that the commutator $(gf)^{-1}(fg)$
 is of the form $x\mapsto x+h(x)$, where $h(x)$ does not contain
monomials of degree $\le 3$.

(3) Since $g\in U$, we obtain $fg: x\mapsto g(x) +u(g(x)) =g(x)+u(x) +\dots$,
where ``$\dots$'' contains no monomial of degree $\le k$.
 Also
$gf: x\mapsto g(x+u(x))= g(x)+u(x)+\dots$. Now by statement (1), we have $(gf)^{-1}(fg): x +v(x)$, as required.
 \end{proof}

 Let us denote by
$U_k$ the set of mappings $x\to x+u(x)$, where the polynomial $u(x)$ has no monomials of degree $\le k$ in $x$ . It is easy to see that every $U_k$ is an isolated subgroup of $U$. So $U_1=U$ and $U_c$ is the trivial subgroup.

\begin{prop} \label{11} The group $G$ is solvable, the subgroup $U$ is nilpotent of class $\le \max(1,c-2)$, and the series
\begin{equation}\label{c}
U=U_1\ge U_3\ge U_4\ge\dots \ge U_{c-1}\ge U_c=\{1\}
\end{equation}
is a descending central series in $G$.
\end{prop}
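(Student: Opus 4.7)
The plan is to reduce the proposition to two facts about the series (\ref{c}): each $U_k$ appearing in it is a normal subgroup of $G$, and $[U,U_k]\le U_{k+1}$ at each consecutive step. Combined, these show (\ref{c}) is a central series of $U$ consisting of $G$-normal subgroups, so $U$ is nilpotent of class at most the number of consecutive factors. Since $G/U\cong\F^{*}$ is abelian, this also forces $G$ to be solvable, via the normal series $\{1\}=U_c\le\cdots\le U_1=U\le G$ whose factors are all abelian.

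For the subgroup and normality check, Lemma~\ref{0}(1) shows that each $U_k$ is closed under products and inverses: for $f,g\in U_k$, the product $f^{-1}g$ is again of the form $x\mapsto x+(\text{terms of degree}\ge k+1)$. To verify normality in $G$, it is enough to conjugate by the generators of $G$, namely elements of $U$ and the scalar maps $f_a:x\mapsto ax$ (since $G=U\rtimes \F^{*}$). For a scalar, $g(x)=x+v(x)\in U_k$ is carried to $f_a^{-1}gf_a(x)=x+a^{-1}v(ax)$, which has exactly the same degree structure and hence stays in $U_k$. For $h\in U$ the identity $h^{-1}gh=g\cdot[g,h]$ reduces $U$-normality to the centrality step below.

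The centrality step $[U,U_k]\le U_{k+1}$ splits into two cases matching the two ``types'' of transition in (\ref{c}). For $k=1$, it is exactly Lemma~\ref{0}(2): any commutator of two elements of $U$ lies in $U_3$, giving $[U,U_1]\le U_3$. For $k\ge 3$, an element $g\in U_k$ is of the form $x\mapsto x+u(x)$ with $u$ involving only monomials of degree $\ge k+1\ge 4$, so Lemma~\ref{0}(3) (applied with the lemma's ``$k$'' set to our $k+1$) gives $[g,h]\in U_{k+1}$ for every $h\in U$. Counting the factors $U_1/U_3,\,U_3/U_4,\ldots,U_{c-1}/U_c$ yields $c-2$ of them for $c\ge 3$, hence nilpotency class of $U$ at most $c-2$. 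The boundary cases $c=1$ (where $U$ is trivial, since $A^{2}=0$) and $c=2$ (where $U_2=\{1\}$, so $U=U_1$ is abelian by Lemma~\ref{0}(2)) both match the bound $\max(1,c-2)=1$.

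The main obstacle is the index bookkeeping: one has to see why the series jumps directly from $U_1$ to $U_3$, which reflects the sharpened depth statement in Lemma~\ref{0}(2) (commutators of elements of $U$ land in $U_3$, not merely in $U_2$), and that Lemma~\ref{0}(3)'s hypothesis $k\ge 4$ is precisely matched by the tail of the series starting at $U_3$. Once these alignments are made, the proof is a routine application of the three parts of Lemma~\ref{0}, with no further computation needed.
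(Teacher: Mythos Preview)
Your proof is correct and follows the same approach as the paper: the core is Lemma~\ref{0}(2) for $[U,U_1]\le U_3$ and Lemma~\ref{0}(3) for $[U,U_k]\le U_{k+1}$ when $k\ge 3$. The paper's own proof consists of just these two sentences; your additional verifications (the subgroup property via Lemma~\ref{0}(1), $G$-normality via the explicit scalar-conjugation formula, the factor count, and the boundary cases $c\le 2$) simply fill in routine details the paper leaves implicit.
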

\begin{proof} It suffices to prove the latter statement.

By Lemma \ref{0} (2), we have $[U,U_1]\le U_3$, and by Lemma \ref{0} (3),
we obtain $[U,U_k]\le U_{k+1}$ for $k\ge 3$. Thus, the series
(\ref{c}) is indeed central.
\end{proof}

\begin{lemma} \label{di} The group $U=U(A)$ is divisible if $\ch\;\F=0$.
\end{lemma}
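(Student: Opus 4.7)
The strategy is to construct the $n$-th root $g$ of $f \in U$ by successive approximation along the central series
$U = U_1 \supseteq U_2 \supseteq \dots \supseteq U_c = \{1\}$
from Proposition \ref{11}, producing at stage $k$ an element $g^{(k)} \in U$ with $(g^{(k)})^n f^{-1} \in U_k$. Decompose the polynomial part of $f \colon x \mapsto x + u_2(x) + \dots + u_c(x)$ into its homogeneous components in $x$; since $\ch\F = 0$, the element $g^{(2)} \colon x \mapsto x + \tfrac{1}{n}\, u_2(x)$ is well-defined, and iterating it gives $(g^{(2)})^k \colon x \mapsto x + \tfrac{k}{n}\, u_2(x) + (\text{degree} \ge 3)$. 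In particular $(g^{(2)})^n f^{-1} \in U_2$, which handles the base case.

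For the inductive step, given $g^{(k)}$ with $(g^{(k)})^n f^{-1} \colon x \mapsto x + t(x)$ and $t$ containing no monomial of degree $\le k$, split off the degree-$(k+1)$ homogeneous piece $t_{k+1}$ and set
\[
h_k \colon x \mapsto x - \tfrac{1}{n}\, t_{k+1}(x), \qquad g^{(k+1)} := g^{(k)} h_k \in U.
\]
A direct power computation yields $h_k \in U_k$ with $h_k^n \colon x \mapsto x - t_{k+1}(x) + (\text{degree} \ge k+2)$. The essential point is then the congruence
\[
(g^{(k)} h_k)^n \equiv (g^{(k)})^n\, h_k^n \pmod{U_{k+1}},
\]
which rests on the commutator estimate $[U_k, U] \subseteq U_{k+1}$: for $k \ge 3$ this is immediate from Lemma \ref{0}(3), and for $k = 2$ it follows from Lemma \ref{0}(2), which gives $[U_2, U] \subseteq [U, U] \subseteq U_3$. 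Collecting every $h_k$ to the right in the $n$-fold product $g^{(k)} h_k \cdots g^{(k)} h_k$ produces only commutators and their conjugates in the (automatically normal) subgroup $U_{k+1}$. Combined with the additive behavior of leading terms in the abelian quotient $U_k / U_{k+1}$, the choice of $t_{k+1}$ gives $(g^{(k+1)})^n f^{-1} \in U_{k+1}$, completing the induction. After at most $c - 2$ steps we obtain $g := g^{(c)}$ with $g^n f^{-1} \in U_c = \{1\}$, i.e.\ $g^n = f$.

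The main technical obstacle is precisely the commutator bookkeeping behind the congruence $(g^{(k)} h_k)^n \equiv (g^{(k)})^n h_k^n \pmod{U_{k+1}}$. One must verify both that each of the commutators $[h_k, (g^{(k)})^{\pm 1}]$ produced during the reordering lies in $U_{k+1}$ and that the conjugates of these commutators collected along the way remain in $U_{k+1}$; normality of $U_{k+1}$ in $U$ follows from the very same estimate. A minor but easy-to-miss point is that the low-degree case $k = 2$ is outside the scope of Lemma \ref{0}(3) (which requires $k \ge 4$) and must be handled via Lemma \ref{0}(2); once this is noted, all remaining manipulations are routine leading-term computations in characteristic zero, with the condition $\ch\F = 0$ used exactly to invert the integer $n$ at each stage.
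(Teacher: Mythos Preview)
Your argument is correct and rests on the same underlying idea as the paper's: the factors of the central filtration $U_k/U_{k+1}$ are divisible (your leading-term computation), and one lifts an $n$-th root step by step through the filtration. The paper packages this more economically, however: instead of the explicit successive-approximation construction with commutator bookkeeping, it simply records that each $U_k/U_{k+1}$ is divisible and then invokes the general fact that if a central subgroup $Z$ and the quotient $G/Z$ are both divisible, so is $G$. Your explicit lifting $g^{(k)} \mapsto g^{(k)}h_k$ with $h_k$ central modulo $U_{k+1}$ is precisely the standard proof of that lemma, unrolled. One small slip worth noting: the series in Proposition~\ref{11} is $U=U_1\ge U_3\ge U_4\ge\dots$, skipping $U_2$; inserting $U_2$ as you do is harmless (the needed commutator estimate $[U_2,U]\subseteq U_3$ still follows from Lemma~\ref{0}(2), as you observe), but it is not literally the series you cite.
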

\begin{proof}
By Propostion \ref{11}, $U$ has a descending series (\ref{c}). We first show that every central factor $U_k/U_{k+1}$ is divisible. Indeed, if $f: x\mapsto x+u(x)+\dots$
and $g: x\mapsto x+v(x)+\dots$, where $u(x)$ and $v(x)$ are homogeneous
polynomials of degree $k+1$ and dots stand for the  terms of higher
degree, then $fg: x\mapsto x+u(x)+v(x)+\dots$. In particular, for $n\ge 1$, we get
$f^n: x\mapsto x +nu(x)+\dots$, which implies the divisibility of the
factor $U_k/U_{k+1}$, since $n$ is invertible in $\F$.

To complete the proof, notice that if a central subgroup $Z$ of a group $G$ is divisible and $G/Z$ is divisible then also $G$ is divisible. So the divisibility of $U$ easily follows by induction on $c$ in (\ref{11}).
\end{proof}

\begin{rk} \label{act} For an arbitrary characteristic of the field $\F$, the same argument implies that all the factors $U_k/U_{k+1}$
are vector spaces over $\F$. The induced action by conjugation of every non-$k$-root of the identity ($k<c$) from the
factor group $G/U\cong F^*$, on $U_k/U_{k+1}$ does not have $1$ as an eigenvalue.
Indeed, let $g: x\mapsto ax$ with $a^k\ne 1$, and $f: x\mapsto x+u(x)+\dots$ represent nontrivial element of $U_k/U_{k+1}$, as above.
Then $g^{-1}fg: x\mapsto a^{-1}(ax +u(ax)+\dots)=x + a^k u(x)+\dots\ne x+u(x)+\dots$.
\end{rk}

\begin{cy}\label{Uder} If the field $\F$ is infinite, then $U$ is the derived
subgroup of $G$.
\end{cy}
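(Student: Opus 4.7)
The easy inclusion $[G,G] \subseteq U$ follows at once from the fact that $G/U \cong \F^*$ is abelian, so the substance of the statement is the reverse inclusion $U \subseteq [G,G]$. My plan is to reduce to the abelian factors of the central series
\begin{equation*}
U = U_1 \supseteq U_3 \supseteq U_4 \supseteq \cdots \supseteq U_c = \{1\}
\end{equation*}
from Proposition \ref{11} and to realise every element of each factor as a commutator $[g,f]$, where $g$ is an appropriately chosen element of the diagonal complement $D = \{x \mapsto ax : a \in \F^*\}$.

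First, I would use infinitude of $\F$ to fix once and for all an element $a \in \F^*$ such that $a^k \neq 1$ for every $k \in \{1,2,\ldots,c-1\}$: the set of forbidden scalars is finite, while $\F$ is not. Write $g$ for the diagonal element $x \mapsto ax$. The direct substitution carried out in Remark \ref{act} shows that conjugation by $g$ acts on the homogeneous degree-$d$ component of any factor in the series by multiplication by $a^{d-1}$. For $k \geq 3$ the factor $U_k/U_{k+1}$ is concentrated in a single degree $d = k+1$, so $g$ acts as the single scalar $a^k$; for the initial factor $U_1/U_3$ one obtains a diagonal action with the two eigenvalues $a$ and $a^2$, both different from $1$ by our choice.

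In additive notation for each abelian factor, a short computation then represents the commutator $[g,f] = gfg^{-1}f^{-1}$ by $(a^{-k}-1)$ times the class of $f$ in $U_k/U_{k+1}$ (and analogously, component by component, on $U_1/U_3$). Since each of these scalars is nonzero, the map $f \mapsto [g,f]$ surjects from $U_k$ onto $U_k/U_{k+1}$. A downward induction on the position in the series now delivers the corollary: the base $U_c = \{1\} \subseteq [G,G]$ is trivial, and from $U_{k+1} \subseteq [G,G]$ the surjectivity yields $U_k \subseteq [G,G] \cdot U_{k+1} \subseteq [G,G]$; taking $k=1$ gives $U \subseteq [G,G]$.

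The one point that requires a little care is the first step of the induction, where the factor $U_1/U_3$ is not concentrated in a single homogeneous degree. One must verify that the group operation modulo $U_3$ respects the decomposition by degree, so that the commutator can indeed be analyzed component by component, and that both eigenvalues $a$ and $a^2$ are nontrivial. The latter is arranged by the initial choice of $a \notin \{1,-1\}$, and the former is a straightforward degree-counting argument in the spirit of Lemma \ref{0}.
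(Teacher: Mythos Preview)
Your approach is essentially the same as the paper's: pick $a\in\F^*$ avoiding all $(c-1)$-th and lower roots of unity (possible since $\F$ is infinite), let $g:x\mapsto ax$, and use that conjugation by $g$ has no eigenvalue $1$ on the central factors of the series from Proposition~\ref{11} to deduce $[g,U]=U$. The paper compresses the induction into a single sentence; your write-up spells it out.

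There is one genuine inaccuracy in your treatment of the factor $U_1/U_3$. You assert that ``the group operation modulo $U_3$ respects the decomposition by degree'', and you invoke this to justify working component by component. That claim is false. The computation in the proof of Lemma~\ref{0}(2) already shows that for $f:x\mapsto x+ax^2+bx^2x+cxx^2+\dots$ and $g':x\mapsto x+kx^2+lx^2x+mxx^2+\dots$ one has
\[
fg':x\mapsto x+(a+k)x^2+(b+l+ak)x^2x+(c+m+ak)xx^2+\dots,
\]
so the degree-$3$ part of the product picks up the cross term $ak$ and is \emph{not} the sum of the degree-$3$ parts. The factor $U_1/U_3$ is abelian, but not via naive coordinatewise addition.

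The fix is immediate: refine the series by inserting $U_2$. Each $U_k$ (including $U_2$) is normal in $G$, and by the argument in the proof of Lemma~\ref{di} each single-step factor $U_k/U_{k+1}$ is a genuine $\F$-vector space on which conjugation by $g$ acts as the scalar $a^k$. Your downward induction then runs over $U_1\supseteq U_2\supseteq U_3\supseteq\cdots\supseteq U_c$ with no special case needed. Alternatively, keep the original series and observe that $f\mapsto[g^{-1},f]$ is a group endomorphism of the abelian group $U_1/U_3$, that it preserves the subgroup $U_2/U_3$, and that it is surjective on both $U_2/U_3$ and $U_1/U_2$; surjectivity on $U_1/U_3$ then follows from the short exact sequence.
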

\begin{proof} We have $[G,G]\le U$, because $G/U$ is an abelian group.
Since $\F$ is infinite, there is an element $a\in \F$, as in Remark \ref{act}. Then the action of $a-1$ is nonsingular on every central
factor of the nilpotent group $G$, and so $[a,U]=U$, whence
$[G,G]\ge U$.
\end{proof}

The properties of $G$ and $U$ obtained in this section will be
applied to the key examples in the next one.

\subsection{Filiform groups of mappings and filiform Lie algebras}\label{FGM}

Consider now the groups of polynomial mappings for power-associative algebras. For example, for $c\ge 3$,
let $A_c$ be the algebra of polynomials in $t$ over $\F$, $char\;\F=0$, with zero constant terms factorised over the ideal $(t^{c+1})$. It has
dimension $c$ and nilpotency class $c$ too. The group $G(A_c)$ acts
in the regular way on the orbit $O(t)$, and the functions $x\mapsto a_1x+a_2x^2+\dots + a_c x^c$ with different vectors of parameters
$(a_1,\dots,a_c)$ are different. In this case, the definition of the subgroup
$U(A_c)$ given in Section \ref{GU}, becomes a trimmed version for the definition of the group $G(\F)$ of formal power series under
substitutions introduced by D. L. Johnson \cite{J}. The Johnson's group $G(\F)$ is the projective limit of the groups $U(A_c)$, $c\to \infty$. If $\F= {\mathbf Z}/p{\mathbf Z}$, The Johnson's
group is called the {\it Nottingham} group, important in the
theory of pro-p-groups \cite{SSS}.

\begin{lemma}\label{2} (\cite{J}) (1) If $c>3$, $f: x\mapsto x+a x^{c-1}+bx^c$, and $g:x\mapsto
x+x^2$, where $x\in A_c$, we get $fgf^{-1}g^{-1}: x\mapsto x+(c-3)ax^c$.

(2) If $k\ge 3$, then we have $[U_k,U_k]\le U_{2k+1}$.

(3) If $char\;\F=0$ and $c\ge 3$, the series (\ref{c}) is the lower central series of the group $U(A_c)$, and so this group is nilpotent of class
$c-2$.

\end{lemma}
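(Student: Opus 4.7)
For part (1), the proof is a direct evaluation of $fg(x)$ and $gf(x)$ in $A_c$. With $g(x) = x + x^2$, the key reductions modulo $x^{c+1}$ are $g(x)^{c-1} \equiv x^{c-1} + (c-1)x^c$, $g(x)^c \equiv x^c$, and $f(x)^2 \equiv x^2 + 2ax^c$, from which $fg(x) - gf(x) = (c-3)a\,x^c$ drops out immediately. Applying Lemma \ref{0}(1) with $\varphi := gf$ and $\varphi' := fg$, the commutator $fgf^{-1}g^{-1} = (fg)(gf)^{-1}$ takes the form $x \mapsto x + (c-3)a\,x^c + (\text{higher})$; since higher monomials vanish in $A_c$, the formula is exact.

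For part (2), I take general $f(x) = x + u(x)$ and $g(x) = x + v(x)$ in $U_k$ with $u, v$ of degree $\ge k+1$, and compute
\[
fg(x) - gf(x) \;=\; \bigl[u(x+v) - u(x)\bigr] - \bigl[v(x+u) - v(x)\bigr].
\]
The dominant contribution to each bracket at degree $2k+1$ comes from linearizing the degree-$(k+1)$ leading part: writing $u = a\,x^{k+1} + \ldots$ and $v = b\,x^{k+1} + \ldots$, both brackets yield $(k+1)ab\,x^{2k+1}$ at that degree (using commutativity of $A_c$). These leading terms cancel, so the difference has degree $\ge 2k+2$, and Lemma \ref{0}(1) places $fgf^{-1}g^{-1}$ in $U_{2k+1}$.

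For part (3), Proposition \ref{11} already gives nilpotency class $\le c-2$ via the central series (\ref{c}). For the matching lower bound, I construct iterated commutators: set $f_3: x \mapsto x + x^3$ and $f_{k+1} = [f_k, g]$. Using the same computation as in part (1) but with target degree $k$ in place of $c-1$, an induction shows $f_k: x \mapsto x + (k-3)!\,x^k + (\text{higher})$ and $f_k \in \gamma_{k-2}(U) \cap U_{k-1}$. Characteristic zero guarantees the leading coefficient never vanishes. In particular, $f_{c-1} \in \gamma_{c-3}(U)$ satisfies the hypothesis of part (1) with $a = (c-4)! \neq 0$, so $[f_{c-1}, g] \in \gamma_{c-2}(U)$ has nonzero coefficient at $x^c$; hence $\gamma_{c-2}(U) \neq \{1\}$ and the class is exactly $c-2$. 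A downward induction (each quotient $U_{k-1}/U_k \cong \F$ is one-dimensional, and each $f_k$ represents a nonzero element of it) then upgrades this to $\gamma_{k-2}(U) = U_{k-1}$ for all $3 \le k \le c$, so (\ref{c}) is the lower central series.

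The main obstacles are twofold. In part (2), the $x^{2k+1}$-cancellation is not visible from a naive term count; one must expand both substitutions and pair the symmetric contributions. In part (3), I have to check that the higher-degree tail of $f_k$ does not contaminate the $x^{k+1}$-coefficient of $[f_k, g]$ in the recursion; this is again a symmetric cancellation of the same flavor as in parts (1) and (2), and it is the step where the characteristic-zero hypothesis becomes essential through the factor $(k-3)!$.
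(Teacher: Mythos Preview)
Your proof is correct. Note that the paper itself gives no proof of this lemma: it simply cites Johnson \cite{J}, so there is no ``paper's own proof'' to compare against beyond that reference. Your argument supplies the omitted details.

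A few remarks on the individual parts. In part (1), your computation $fg(x)-gf(x)=(c-3)ax^c$ is correct, and your appeal to Lemma~\ref{0}(1) (which covers both $f^{-1}f'$ and $f'f^{-1}$) legitimately upgrades this to the exact formula for the commutator. In part (2), the key observation is indeed the cancellation of the symmetric $(k+1)ab\,x^{2k+1}$ terms, which relies on the commutativity of $A_c$; this is the point that distinguishes the present lemma from the weaker bound $[U_k,U]\le U_{k+1}$ established for general (not necessarily commutative) nilpotent algebras in Lemma~\ref{0}(3). In part (3), your recursion $f_{k+1}=[f_k,g]$ with leading coefficient $(k-3)!$ works, and your verification that the degree-$(k+1)$ tail $a_{k+1}$ of $f_k$ contributes identically to both $f_kg$ and $gf_k$ (hence cancels) is exactly what is needed. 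The downward induction for $\gamma_{k-2}(U)=U_{k-1}$ is sound because each factor $U_{k-1}/U_k$ is one-dimensional for $k\ge 4$ (Lemma~\ref{F}) and you have produced a nontrivial element of $\gamma_{k-2}(U)$ in that factor.

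One cosmetic point: you could streamline part (3) by running the induction all the way to $f_c$ directly, rather than stopping at $f_{c-1}$ and invoking part (1) for the last step; the coefficient $(c-3)!$ appears either way.
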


\begin{lemma} \label{F} For $c\ge 3$ and $U=U(A_c)$, the factor group $U/U_3$
is isomorphic to the additive group $\F \bigoplus\F$. If $3\le k\le c-1$ then the other terms
$U_k/U_{k+1}$ of the lower central series are isomorphic to the additive group of $\F$.
\end{lemma}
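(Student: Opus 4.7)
My plan is to work directly in coordinates. Every element of $U$ in the commutative associative algebra $A_c$ has the unique form $f\colon x\mapsto x+a_2 x^2+a_3 x^3+\cdots+a_c x^c$, and membership in $U_k$ amounts to the conditions $a_2=\cdots=a_k=0$. Thus, as a set, $U_k/U_{k+1}$ is parameterized by the single coefficient $a_{k+1}$ (for $k\ge 3$), while $U/U_3$ is parameterized by the pair $(a_2,a_3)$. The task is to compute the induced group operation on these coordinates.

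For $k\ge 3$, I would take representatives $f\colon x\mapsto x+ax^{k+1}$ and $g\colon x\mapsto x+bx^{k+1}$ modulo $U_{k+1}$, and compute $fg(x)=g(x)+a\,g(x)^{k+1}$. Since $g(x)=x+bx^{k+1}$, every cross term in $g(x)^{k+1}$ other than $x^{k+1}$ has degree $\ge k+1+k\ge k+2$, so $g(x)^{k+1}\equiv x^{k+1}$ modulo $U_{k+1}$. Hence $fg(x)\equiv x+(a+b)x^{k+1}$, and the map $f\mapsto a$ gives a group isomorphism $U_k/U_{k+1}\to(\F,+)$.

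For $U/U_3$, I would take $f\colon x\mapsto x+a_2x^2+a_3x^3$ and $g\colon x\mapsto x+b_2x^2+b_3x^3$ modulo $U_3$, and expand
\[
fg(x)=g(x)+a_2 g(x)^2+a_3 g(x)^3\equiv x+(a_2+b_2)x^2+(a_3+b_3+2a_2b_2)x^3\pmod{U_3},
\]
using $g(x)^2\equiv x^2+2b_2x^3$ and $g(x)^3\equiv x^3$ modulo $x^4$. This composition is abelian but \emph{not} the componentwise addition; the cross term $2a_2b_2$ has to be absorbed by a change of coordinates. The key trick is to set $\phi(f)=(a_2,\,a_3-a_2^2)\in\F\oplus\F$ and verify, using $(a_2+b_2)^2=a_2^2+2a_2b_2+b_2^2$, that
\[
\phi(fg)=\bigl(a_2+b_2,\,(a_3+b_3+2a_2b_2)-(a_2+b_2)^2\bigr)=\phi(f)+\phi(g).
\]
Bijectivity of $\phi$ is immediate from the explicit inverse $(\alpha,\beta)\mapsto(\alpha,\beta+\alpha^2)$, so $\phi\colon U/U_3\xrightarrow{\sim}\F\oplus\F$.

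The only genuinely non-routine step is finding the coordinate change $\phi$ in the $U/U_3$ case; the rest is just careful substitution and truncation modulo $x^{k+2}$. Note that abelianness of $U/U_3$ is already guaranteed by Lemma \ref{2}(3), since $U_3=[U,U]$; the content of the present lemma is the identification of the group law up to isomorphism, which requires the explicit computation above.
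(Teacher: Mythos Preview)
Your proposal is correct and follows essentially the same approach as the paper: the paper likewise computes $fg:x\mapsto x+(a+k)x^2+(b+l+2ak)x^3+\dots$ and uses the identical coordinate change $\overline{(a,b)}\mapsto(a,\,b-a^2)$ to exhibit the isomorphism $U/U_3\cong\F\oplus\F$, and handles $U_k/U_{k+1}$ for $k\ge 3$ by the same direct computation $fg:x\mapsto x+(a+b)x^{k+1}+\dots$.
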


\begin{proof} Every function from $U$ is defined modulo $U_3$ by
two parameters $a,b \in \F$, namely, $f_{a,b}: x\mapsto x+ax^2+bx^3+\dots$. Let us denote the coset of $U_3$ containing $f_{a,b}$ by $\overline{(a,b)}$. 
If $g: x\mapsto x+kx^2+lx^3+\dots$, then by the formula from
the proof of Claim (2) in Lemma \ref{0} (now with $c=m=0$), we obtain
$fg: x\mapsto x+(k+a)x^2+(l+2ak+b)x^3+\dots$. So for the bijection
$\alpha: U/U_3\to \F\bigoplus\F$ given by the correspondence $\overline{(a,b)}\mapsto
(a, b-a^2)$, we get $\alpha(\overline{(a,b)})+\alpha (\overline{(k,l)})= (a+k, b+l-a^2-k^2)$.
The product $fg$ defines the pair $\overline{(a+k, b+l+2ak)}$, whence
\[
\alpha(\overline{(a+k, b+l+2ak}) = (a+k, b+l+2ak-(a+k)^2) = (a+k,b+l-a^2-k^2),
\] and so $\alpha$ is the desired group isomorphism.

For the functions $f:x\mapsto x+ax^{k+1}+\dots$ and $g:x\mapsto x+bx^{k+1}+\dots$
representing the elements of $U_k/U_{k+1}$ with $k\ge 3$, we have $fg: x\mapsto x+(a+b)x^{k+1}+\dots$, making the isomorphisms  $U_k/U_{k+1}\cong \F$ obvious.
\end{proof}

\begin{lemma} \label{N} Assume that $\ch\; \F=0$ and a normal subgroup $N$ of $U=U(A_c)$
contains a function $f:x\mapsto x+ax^k+\dots$ for some $k\ge 4$ and $a\ne 0$. Then the derived subgroup $[N,N]$ contains a function $x\mapsto x+a'x^{2k}+\dots$ with $a'\ne 0$.
\end{lemma}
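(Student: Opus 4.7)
The plan is to produce a second element $h\in N$ whose expansion matches that of $f$ in degree $k$ but differs from it in degree $k+1$, and then to read off a nonzero coefficient of $x^{2k}$ from their commutator, which automatically lies in $[N,N]$.

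Since $N$ is normal in $U$, for every $g\in U$ the conjugate $h:=gfg^{-1}$ belongs to $N$. I would take $g\colon x\mapsto x+x^2$, whose inverse in $U$ has the standard expansion $g^{-1}\colon x\mapsto x-x^2+2x^3-5x^4+\cdots$. Writing $f(y)=y+ay^k+\alpha y^{k+1}+(\text{degree}\ge k+2)$, one uses $g^{-1}(x)^k=x^k-kx^{k+1}+O(x^{k+2})$ to pick up the term $-akx^{k+1}$ inside $f(g^{-1}(x))$, while the identity $g^{-1}(x)+g^{-1}(x)^2=x$ causes the low-degree contributions of the outer $g$ to telescope away except for a residual $+2ax^{k+1}$, coming from the cross-term $2\cdot g^{-1}(x)\cdot ag^{-1}(x)^k$ inside $(fg^{-1}(x))^2$. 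Combining,
\[
h(x)=x+ax^k+\bigl(\alpha-a(k-2)\bigr)x^{k+1}+(\text{degree}\ge k+2).
\]
Since $a\ne 0$ and $k\ge 4$, the $x^{k+1}$-coefficients of $f$ and $h$ differ by the nonzero scalar $a(k-2)$.

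Both $f$ and $h$ lie in $U_{k-1}$, so Lemma \ref{2}(2), applied with $k-1\ge 3$, yields $[f,h]\in U_{2k-1}$; hence $[f,h]$ has no terms of degree less than $2k$. To identify the $x^{2k}$-coefficient, I would expand $fh(x)-hf(x)$ modulo degree $>2k$. Write $\beta:=\alpha-a(k-2)$, and let $s_f,s_h$ denote the tails of $f,h$ (degree $\ge k+2$), with $x^{2k}$-coefficients $\gamma_{2k},\delta_{2k}$ respectively. The $x^{2k}$-coefficient of $fh(x)$ then receives contributions $\delta_{2k}+ak\beta+\alpha(k+1)a+\gamma_{2k}$, coming respectively from $h(x)$ itself, from $a\cdot h(x)^k$ (via a single factor of $\beta x^{k+1}$ alongside $k-1$ copies of $x$), from $\alpha\cdot h(x)^{k+1}$ (via a single factor of $ax^k$ alongside $k$ copies of $x$), and from $s_f(h(x))$. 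The corresponding expression for $hf(x)$ is the same with $\alpha\leftrightarrow\beta$ and $\gamma\leftrightarrow\delta$. All the symmetric pieces cancel, leaving the $x^{2k}$-coefficient of $(fh-hf)(x)$ equal to $a(\alpha-\beta)=a^{2}(k-2)$.

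Finally, the passage from $fh-hf$ to the genuine commutator $[f,h]=(fh)\circ(hf)^{-1}$ preserves this coefficient: $(hf)^{-1}(x)-x$ has degree $\ge k$ and $fh-hf$ has degree $\ge 2k$, so their composition alters things only in degree $\ge 3k-1>2k$. Therefore
\[
[f,h]\colon x\mapsto x+a^{2}(k-2)x^{2k}+(\text{degree}>2k),
\]
with $a':=a^{2}(k-2)\ne 0$; since $f,h\in N$, this element lies in $[N,N]$, as required. The main obstacle is the cancellation bookkeeping in the middle step: the undisplayed $x^{2k}$-parts $\gamma_{2k},\delta_{2k}$ of the tails appear on both sides and must cancel symmetrically. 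This is precisely why $h$ is chosen as a \emph{conjugate} of $f$ rather than an arbitrary element of $N$ — it forces the tails of $f$ and $h$ to enter the computation in a symmetric, predictable way, isolating the contribution $a(\alpha-\beta)$ as the sole nonzero term in degree $2k$.
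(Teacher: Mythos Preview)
Your proof is correct and follows essentially the same strategy as the paper: use normality of $N$ together with the element $g\colon x\mapsto x+x^2$ to manufacture a second element of $N$, then commute it with $f$ to land in $[N,N]$ with a nonzero $x^{2k}$ coefficient.

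The only real difference is in which second element you produce. The paper takes the commutator $f g f^{-1} g^{-1}\in N$, which by Lemma~\ref{2}(1) has the form $x\mapsto x+bx^{k+1}+\dots$ with $b=(k-2)a\ne 0$; since this lies in $U_k$ while $f\in U_{k-1}$, the subsequent computation of $(gf)(x)-(fg)(x)$ is short and one reads off the leading $x^{2k}$ term directly. You instead take the conjugate $h=gfg^{-1}\in N$, which shares $f$'s leading term $ax^k$ but has $x^{k+1}$-coefficient shifted by $a(k-2)$; your element is exactly $f$ times the inverse of the paper's, so the two constructions are nearly the same. Because your $h$ still lies in $U_{k-1}$, you have to invoke Lemma~\ref{2}(2) to know $[f,h]\in U_{2k-1}$ before computing the $x^{2k}$ coefficient, and the bookkeeping with the tails $\gamma_{2k},\delta_{2k}$ is a bit longer --- but these do cancel symmetrically for the simple reason you give (each tail contributes its own $x^{2k}$-coefficient to both $fh$ and $hf$), and your final coefficient $a^2(k-2)$ matches what the paper obtains (the paper's $a'=ab$ with $b=(k-2)a$). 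The paper's route is marginally more economical; yours is a legitimate variant of the same idea.

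One small remark on the closing paragraph: the assertion that $fh-hf$ has degree $\ge 2k$ is not quite immediate from Lemma~\ref{2}(2), which bounds the commutator itself, not the difference. The missing link is the (easy) converse of Lemma~\ref{0}(1): if $F^{-1}F'\in U_m$ then $F'-F$ has no monomials of degree $\le m$. With that in hand, your degree-$3k-1$ estimate for the correction terms is correct and the $x^{2k}$ coefficient survives unchanged.
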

\begin{proof} Let $f:x\mapsto x+ax^k+u(x)$, where all monomials in $u(x)$ have degree $\ge k+1$. There is nothing to prove if $2k\ge c+1$. Otherwise,
by Claim (1) of Lemma \ref{2} applied to the factor group $U(A_{k+1})$, the subgroup
$N$ contains a function $g: x\mapsto x+bx^{k+1}+v(x)$ for some $b\ne 0$, where all monomials in $v(x)$ have degree $\ge k+2$. We compute modulo $U_{2k}$ the products $fg$ and $gf$
\begin{eqnarray*}
fg: x&\mapsto& x+b x^{k+1}+ v(x)+ a( x+b x^{k+1}+v(x))^{k}+u(x+b x^{k+1}+v(x))\\ &=& x+ ax^k+bx^{k+1}+v(x) + kab x^{2k}+u(x)+\dots,
\end{eqnarray*}
 and
\begin{eqnarray*}
gf: x&\mapsto& x+ax^{k}+u(x)+b(x+ax^{k}+u(x))^{k+1} +v(x+ax^k+u(x))\nonumber\\&=&
x + ax^k +u(x) +b x^{k+1}+v(x)+ (k+1)bax^{2k}+\dots\nonumber
\end{eqnarray*}
So the difference $(gf)(x)-(fg)(x)$ is $ab x^{2k}+\dots$. By Claim (1) of Lemma \ref{0}, $[N,N]\ni (gf)^{-1}(fg): x\mapsto x +a' x^{2k}+\dots$ with $a'\ne 0$, as required.
\end{proof}

\begin{theorem} \label{gr} If $char\;\F=0$, Then the group $U(A_2)$ is abelian and for $c\ge 3$ the group $U(A_c)$ is nilpotent of class $c-2$. It is solvable of
length at least $k$ if $c\ge 2^k$; otherwise it is solvable
of length $<k$.
If the field $\F$ is infinite, then the solvability length of the group $G(A_c)$ is by one greater than the solvability length of $U(A_c)$.
\end{theorem}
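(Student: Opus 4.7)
The plan is to treat the four assertions in order. The abelian and nilpotent assertions are quick: for $c=2$, every element of $U(A_2)$ has the form $x\mapsto x+ax^2$, and composition yields $x\mapsto x+(a+b)x^2$ because $x^3=0$ in $A_2$, so $U(A_2)\cong(\F,+)$ is abelian. For $c\ge 3$, nilpotency of class exactly $c-2$ is already recorded in Lemma \ref{2}(3). The substance therefore lies in the two-sided estimate on the derived length, which I split into matching upper and lower bounds.

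For the upper bound, I claim by induction on $k\ge 1$ that $U^{(k)}\le U_{2^{k+1}-1}$. The base $k=1$ is $[U,U]\le U_3$, which is Lemma \ref{0}(2). For the step, Lemma \ref{2}(2) gives $[U_m,U_m]\le U_{2m+1}$, so
\[
U^{(k+1)}=[U^{(k)},U^{(k)}]\le[U_{2^{k+1}-1},U_{2^{k+1}-1}]\le U_{2^{k+2}-1}.
\]
Consequently, if $c<2^k$ then $2^k-1\ge c$, hence $U^{(k-1)}\le U_{2^k-1}\subseteq U_c=\{1\}$ and the derived length is strictly less than $k$.

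For the lower bound I will prove by induction on $j\ge 1$ that, whenever $c\ge 2^j$, the subgroup $U^{(j-1)}$ contains a mapping of the form $x\mapsto x+ax^{2^j}+\ldots$ with $a\ne 0$. The case $j=1$ is witnessed by $x\mapsto x+x^2\in U$. For $j=2$, take $f:x\mapsto x+x^3$ and $g:x\mapsto x+x^2$ in $U(A_c)$ with $c\ge 4$; a direct computation of $fg$ and $gf$ through terms of degree $4$ shows $fg(x)-gf(x)=x^4+\ldots$, so Lemma \ref{0}(1) applied to $(gf)^{-1}(fg)=[f,g]$ gives $[f,g]:x\mapsto x+x^4+\ldots$, a degree-$4$ element of $U^{(1)}$. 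The inductive step for $j\ge 2$ is an immediate application of Lemma \ref{N}: the subgroup $N=U^{(j-1)}$ is normal in $U$, its distinguished element has exponent $2^j\ge 4$, and Lemma \ref{N} produces a nonzero element of exponent $2^{j+1}$ in $[N,N]=U^{(j)}$ provided $c\ge 2^{j+1}$. Setting $j=k$ yields $U^{(k-1)}\ne\{1\}$ whenever $c\ge 2^k$, matching the upper bound.

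Finally, for the statement about $G(A_c)$ over an infinite field: $G/U\cong\F^*$ is abelian, so $[G,G]\le U$, and Corollary \ref{Uder} supplies the reverse inclusion, hence $[G,G]=U$. It follows that $G^{(n+1)}=U^{(n)}$ for every $n\ge 0$, so the derived length of $G$ is one more than that of $U$. The main obstacle in the whole argument is the base case $j=2$ of the lower bound: both Lemma \ref{0}(2) and Lemma \ref{N} require leading exponent at least $4$, so the initial jump from the degree-$2$ generator of $U$ to a degree-$4$ element of $U^{(1)}$ is not supplied by the general machinery and must be extracted from the explicit commutator computation above (equivalently, from Lemma \ref{2}(1) applied inside the quotient $U/U_4\cong U(A_4)$).
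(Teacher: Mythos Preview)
Your proof is correct and follows essentially the same architecture as the paper's: Lemma~\ref{2}(3) for the nilpotency class, the chain $U^{(k)}\le U_{2^{k+1}-1}$ via Lemma~\ref{0}(2) and Lemma~\ref{2}(2) for the upper bound, Lemma~\ref{N} plus induction for the lower bound, and Corollary~\ref{Uder} for the jump from $U$ to $G$.

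Where you differ from the paper is in your explicit treatment of the base case $j=2$ of the lower bound. The paper simply writes ``Lemma~\ref{N} and the obvious induction'', but as you correctly observe, Lemma~\ref{N} is stated only for leading exponent $k\ge 4$, so it cannot be applied to the element $x\mapsto x+x^2\in U^{(0)}$; one genuinely needs a separate argument to produce a degree-$4$ element in $U^{(1)}$. Your explicit commutator computation with $f:x\mapsto x+x^3$ and $g:x\mapsto x+x^2$ (or, equivalently, the appeal to Lemma~\ref{2}(1) in the quotient $U/U_4\cong U(A_4)$) fills this gap cleanly. In this respect your write-up is actually more careful than the paper's, which also contains an evident index slip in the displayed claim ``$U^{(k)}$ contains an element from $U_{2^k-1}\backslash U_{2k}$'' --- the intended statement is the one you prove, namely that $U^{(k-1)}$ contains an element of leading degree $2^k$.
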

\begin{proof} The statement on the nilpotency class follows from
Lemma \ref{0} (3).

Let $U^{(0)}=U$ and by induction, $U^{(s)}=[U^{(s-1)}, U^{(s-1)}]$
for $s\ge 1$. Then $U^{(1)}\le U_3$ by  Claim (2) of Lemma \ref{0} while Claim (2) of Lemma
\ref{2}  provides us, by induction, with inclusions  $U^{(k-1)}\le U_{2^k-1}$ for $k\ge 3$. So this  subgroup is trivial if
$2^k-1\ge c$, and the solvability length of $U$ does not exceed
$k-1$ if $c<2^k$.

Lemma \ref{N} and the obvious induction show that the $k$-th derived
subgroup $U^{(k)}$ contains an element from $U_{2^k-1}\backslash U_{2k}$ if $2^k\le c$. Thus, in this case, the solvability length of
$U$ is at least $k$.

The second statement is contained in Corollary \ref{Uder}.
\end{proof}

If $\F =\mathbb Q$, then by Lemmas \ref{di} and \ref{11}, the torsion free group $U=U(A_c)$ is a divisible nilpotent group. Hence, according to Malcev's correspondence, we have the rational nilpotent Lie algebra $L= \cL(U)$ with algebra operations defined on the same set. It follows from Lemma \ref{F} that $\sqrt U_k=U_k$ for each $k$. We refer to \cite[Theorem 10.13]{KHU} for the claims that follow. 

So every
$U_k$ becomes a subalgebra $L_k=\cL(U_k)$ of $L$ and an ideal of $L$.
Since the factors $U_k/U_{k+1}$ are central, the addition in $L_k/L_{k+1}$ coincides with the group operation in $U_k/U_{k+1}$,
and so it is a $\mathbb Q$-vector space of dimension 1 or 2 by Lemma
\ref{F}. Therefore $\dim L=\sum_k \dim L_k/L_{k+1} = 2+(c-3)\times 1=c-1$, and the nilpotency class of $L$ is $c-2$ by Theorems \ref{gr} and
\ref{mal}.

It is well known, that the factor-algebra  $L/[L,L]$ of a nilpotent Lie algebra $L$ of dimension $d\ge 2$ has dimension at least 2  and so the nilpotency class of $L$ is at most $d-1$. If it is exactly $d-1$,
then all other nonzero factors of the lower central series of $L$ have
 to be one-dimensional. Such algebras are called {\it filiform}. Filiform Lie algebras are an important block in the classification theory of finite-dimensional nilpotent algebras. They are also important in the theory of nil-manifolds in Differential Geometry. For a book and a survey on this subject see \cite{GK} and \cite{Rem}.

 Thus, we obtain a series of filiform Lie algebras $\cL(U(A_c))$, $c=3,4,\dots$.

 Since the solvability of length $k$ in Lie algebra is defined by a commutator of length $2^k$,  a nilpotent Lie algebra of class $n\ge 1$ is solvable with derived length $\le 1+\log_2 n $, and every nilpotent Lie algebra of dimension $\le 2^k$ has solvability length at most $k$.

 The algebra $\cL(U)$ has the same solvability length as the group $U$ (see \cite[Theorem 10.13, Claim (e)]{KHU}). So
 by Theorem \ref{gr}, the derived length of the filiform algebra
 $L(k)= U(A_{2^k})$ is $k$, the maximal derived length among the
 nilpotent algebras of the same dimension $2^k-1$. Thus, we have proved the first claim of the following.

 \begin{prop}\label{pBurde} The filiform nilpotent Lie rational Lie algebras $L(k)$
 have dimensions $2^k-1$ and derived length $k$ for every $k\ge 2$. There exist algebras of dimension  $2^k$ and solvability class $k+1$.
 \end{prop}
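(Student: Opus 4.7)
The plan is to handle the two claims separately. For the first claim, the dimension $\dim L(k) = 2^k - 1$ of $L(k) = \cL(U(A_{2^k}))$ is immediate by summing the dimensions of the lower central factors established in Lemma \ref{F}: $\dim L_1/L_3 = 2$ and $\dim L_j/L_{j+1} = 1$ for $3 \le j \le 2^k - 1$, giving $2 + (2^k - 3) = 2^k - 1$. For the derived length, Theorem \ref{gr} applied with $c = 2^k$ shows that $U(A_{2^k})$ has derived length at least $k$ (since $c = 2^k \ge 2^k$) and strictly less than $k+1$ (since $c = 2^k < 2^{k+1}$), hence exactly $k$; by the cited Theorem 10.13(e) of \cite{KHU}, the Malcev correspondence preserves derived length, so $L(k)$ also has derived length $k$.

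For the second claim I would construct, for each $k \ge 2$, a solvable Lie algebra of dimension $2^k$ and derived length $k+1$ by extending $L(k)$ by a semisimple derivation. The scaling $\sigma_a \colon x \mapsto ax$ is an algebra automorphism of $A_{2^k}$ and normalises $U(A_{2^k})$; by functoriality of the Malcev correspondence (Theorem \ref{mal}), it induces a Lie algebra automorphism of $L(k)$, which I denote by the same symbol. A direct computation gives $\sigma_a(f)\colon x \mapsto x + \sum a_i a^{1-i} x^i$ on a generic element $f\colon x\mapsto x + \sum a_i x^i$, so for $a \in \Q$ with $a \ne 0, \pm 1$ the automorphism $\sigma_a$ is semisimple with distinct eigenvalues $a^{1-i}$ for $i = 2, 3, \dots, 2^k$. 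The resulting eigenspace decomposition $L(k) = \bigoplus_{i=2}^{2^k} V_i$ with $\dim V_i = 1$ satisfies $[V_i, V_j] \subseteq V_{i+j-1}$ (as $\sigma_a[x,y]=[\sigma_a x,\sigma_a y]$ forces the eigenvalue on $[V_i,V_j]$ to be $a^{2-i-j}$). Hence the operator $d\colon L(k) \to L(k)$ with $d|_{V_i} = (1-i)\,\mathrm{id}$ is a derivation of $L(k)$, and since all its eigenvalues are nonzero we have $d(L(k)) = L(k)$.

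Now form the semidirect product $\tilde L(k) = \F d \ltimes L(k)$, a solvable Lie algebra of dimension $1 + (2^k - 1) = 2^k$. Its derived ideal is $\tilde L(k)^{(1)} = [d, L(k)] + [L(k), L(k)] = d(L(k)) + L(k)^{(1)} = L(k)$, using surjectivity of $d$; by an easy induction $\tilde L(k)^{(i+1)} = L(k)^{(i)}$ for all $i \ge 0$, so the derived length of $\tilde L(k)$ equals $1 + k = k + 1$, which establishes the second claim.

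The main obstacle is verifying that $\sigma_a$ really descends to a Lie algebra automorphism of $L(k)$ with the claimed eigenstructure, and that the eigenspaces combine with the Lie bracket to give an actual $\Z$-grading rather than only a filtration. This is a standard consequence of the functoriality of the Malcev correspondence (group automorphisms of a divisible torsion-free nilpotent group correspond bijectively to Lie algebra automorphisms of the associated rational Lie algebra, and semisimplicity is preserved), but it could alternatively be handled by working throughout in the associated graded Lie algebra $\bigoplus_j L_j/L_{j+1}$, where the Euler derivation is manifestly present, and then observing that this graded algebra has the same dimension and derived length as $L(k)$ itself.
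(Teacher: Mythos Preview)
Your proof is correct and follows essentially the same route as the paper: the first claim is handled exactly as in the text preceding the proposition (via Lemma \ref{F} and Theorem \ref{gr} together with the cited preservation of derived length), and for the second claim the paper also uses the scaling action $x\mapsto ax$ of $\F^*$ to obtain a weight-space decomposition of $\cL(U)$, the resulting Euler-type derivation $D:e_s\mapsto s e_s$, and the semidirect product $\langle D\rangle_\F\oplus \cL(U)$. The paper resolves what you call the ``main obstacle'' precisely by observing that the subgroups $U_s$ are isolated, hence become $\F^*$-invariant subalgebras $L_s$, and that the induced action on each one-dimensional factor $L_s/L_{s+1}$ is scalar multiplication by $a^s$ (Remark \ref{act}), so distinctness of the weights forces semisimplicity and the $\Z$-grading you need.
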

 
 \begin{proof} Let us  prove the second claim.  Remark \ref{act} and Lemma \ref{hull} we have a well-defined action of the group $\F^*$ on each factor $U_s/U_{s+1}$. If we identify this factor with the additive group of $\F$ (see Lemma \ref{F}), the action of each $a\in F^*$ is the multiplication by $a^s$, that is the scalar multiplication via the weight function $\nu_s: a\mapsto a^s$ defined on $F^*$.

It is mentioned just before Proposition \ref{11} that the subgroups $U_s$ are isolated.  It  follows by \cite[Theorem 10.13, Claims (a,f)]{KHU}), they are subalgebras in $\cL(U)$ invariant under the action of $F^*$. (Alternatively one could use Remark \ref{sub} and Theorem \ref{mal}).  The isomorphism of the groups $U_s/U_{s+1}$ and $1$-dimensional factor space $U_s/U_{s+1}$, established in  Lemma \ref{F} shows that the action of $F^*$in this subspace has the same weight $\nu_s$.

Thus, $\cL(U)$ has a nonzero weight space for each of the wieghts $\nu_1,\dots,\nu_{c-1}$. Since their number equals the dimension of $\cL(U)$, all the respective weight space are $1$-dimensional and $\cL(U)$ is their direct sum.

Since the product of two vectors with weights $\nu_s$ and $\nu_t$ in $\cL(U)$ has weight $\nu_{s+t}$, the weight subspaces determine a $\ZZ$ grading of $\cL(U)$, which provides us with an \textit{adapted basis} of $\cL(U)$, that is a basis with the condition $e_s e_t = \lambda_{s,t} e_{s+t}$  for some $\lambda_{s,t}\in \F$.

It is easy to see that the linear map $D$ defined on the adapted basis by $D:e_s \mapsto  s e_s$ is the derivation of $\cL(G)$. Since $D$ is nonsingular, $D(\cL(G))=\cL(G)$. If we consider the semidirect product $M=\langle D\rangle_\F\oplus\cL(U)$ then $D\cL(U)=\cL(U)$ in $M$ and so the derived subalgebra of $M$ equals $\cL(U)$. Then the derived length of $M$ is by 1 greater than the derived length of $\cL(U)$ so that  $M$ is  the desired algebra of dimension $2^k$ and derived length $k+1$.
 \end{proof}

Note that the filiform nilpotent Lie algebras $\mathfrak{f}_{\frac{9}{10},n}$ with the same parameters, that is, of dimension  $2^k-1$  and maximal solvability length $k$ for the algebras of this dimension, have been obtained earlier (see D. Burde's papers \cite{BDV}, \cite[Proposition 3.1]{Bur} and also \cite{BST}), in the context of Differential Geometry. Our second example in Proposition \ref{pBurde} matches another Burde's example \cite[Proposition 5.2]{Bur}. The results in the quoted paper are mostly obtained by direct computations, using the structure constants. Our algebras $L(k)$ admit a simple definition via Malcev's correspondence.

\subsection{Partially ordered algebras and (quasi)groups}\label{ssPOA}

The concept of a partially ordered group is well known. Namely, a multiplicative group $G$ is partially ordered if it is equipped with a partial order $\le$, and for arbitrary $a,b,c\in G$ the
inequality $a\le b$ implies $ac\le bc$ and
$ca\le cb$ (the multiplication is monotone).
The order is linear if either $a\le b$ or $b\le a$
for arbitrary $a,b\in G$.

Extending this definition to quasigroups, one
requests in addition that $ac\le bc$ implies
$a\le b$ and $ca\le cb$ implies $a\le b$.

The following definition of the order on algebras was
introduced by V. M. Kopytov \cite{KVM}. He suggested that a Lie algebra $L$ over a linearly ordered field $\F$ should be called \textit{partially ordered} if $L$ is a partially ordered vector space $(L,\le)$ such that for any $a,b\in L$, $a>0$, one has $ab< a$. Kopytov studied
linearly ordered Lie algebras. To justify the
definition, he showed that the group $\cG(L)$ defined
on a rational partially ordered nilpotent Lie algebra $(L, \le)$ by the
BCH-operation (\ref{BCH}) is linearly ordered with the same order $\le$.

Generalizing Kopytov's definition to arbitrary algebras, one has to require that $a>0$ implies
that $ab< a$ and $ba<a$. The set of
polynomial operations can also be much extended:

\begin{theorem} Let $(A, \le)$ be an arbitrary nilpotent, linearly ordered
algebra over a linearly ordered field $\F$, and $Q(A)$ be a quasigroup with a polynomial operation $\circ$ of the form (\ref{bino}). Then $Q(A)$ is a linearly ordered
quasigroup with the same order $\le$.
\end{theorem}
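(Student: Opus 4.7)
The plan is to reduce everything to strict monotonicity of $\circ$ in each argument. Write $a\circ c=a+c+f(a,c)$ as in (\ref{bino}), with $f\in\cF(x,y)$ a polynomial whose monomials have degree $\ge 2$. Since the order on $A$ is linear and $Q(A)$ is already a quasigroup, the four axioms required of an ordered quasigroup follow once one has the two strict implications $a<b\Rightarrow a\circ c<b\circ c$ and $a<b\Rightarrow c\circ a<c\circ b$: non-strict monotonicity is then immediate, and each cancellation law $a\circ c\le b\circ c\Rightarrow a\le b$ follows by contraposition, since if $a>b$ strict monotonicity gives $a\circ c>b\circ c$. So fix $a<b$ and put $d=b-a>0$; the identity
\[
b\circ c-a\circ c \;=\; d+\bigl(f(b,c)-f(a,c)\bigr)
\]
reduces the first inequality to showing that the second summand cannot defeat $d$.

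The tool I would use is a scalar-invariant infinitesimal comparison: for $x,y\in A$ write $x\ll y$ if $\alpha|x|<|y|$ for every positive $\alpha\in\F$, where $|x|=\max(x,-x)$. Applying Kopytov's axiom $a>0\Rightarrow ab<a,\ ba<a$ to $\pm\alpha b$ and $\pm\alpha a$ in place of $b,a$ for arbitrary positive $\alpha$, one obtains $|ab|\ll|a|$ whenever $a\ne 0$ and $|ab|\ll|b|$ whenever $b\ne 0$. The relation $\ll$ is readily seen to be transitive, and from $|M_1\cdot M_2|\ll|M_1|$ for $M_1\ne 0$ an induction on degree yields the core lemma: if $M$ is a nonassociative monomial of degree $\ge 2$ containing a variable $z$ and the substituted value at $z$ is $d\ne 0$, then $|M(\dots,d,\dots)|\ll|d|$.

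The second point is that $I_d=\{x\in A:x\ll d\}$ is an $\F$-linear subspace of $A$. Stability under scalars is trivial; stability under addition comes from applying the defining inequality for $\ll$ with $2\alpha$ in place of $\alpha$, so that $\alpha|x_1|,\alpha|x_2|<|d|/2$ and hence $\alpha|x_1+x_2|<|d|$. Now expand each monomial of $f(a+d,c)$ and subtract the $d$-free part $f(a,c)$: the difference $f(b,c)-f(a,c)$ is an $\F$-linear combination of nonassociative monomials in the three letters $a,d,c$, each of total degree $\ge 2$ and each containing $d$. By the core lemma every summand lies in $I_d$, hence so does their sum, and in particular $|f(b,c)-f(a,c)|<d$. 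This forces $b\circ c-a\circ c>0$. The companion inequality $c\circ a<c\circ b$ is obtained by the same expansion applied to $f(c,a+d)-f(c,a)$.

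The main obstacle is precisely this passage from Kopytov's pointwise inequality to a statement that survives an arbitrary $\F$-linear sum over the many monomials produced by expanding $f(a+d,c)$. A naive nonstrict formulation $|x|<|d|$ is not preserved under summation, since a finite sum of terms each slightly smaller than $d$ can easily exceed $d$. The scalar-invariant relation $\ll$ is precisely what upgrades Kopytov's axiom into something closed under $\F$-linear combinations; once $I_d$ is verified to be a subspace, the rest is bookkeeping in the polynomial expansion.
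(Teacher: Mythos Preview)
Your proof is correct and follows essentially the same route as the paper's: both introduce the scalar-invariant relation $\ll$, argue that every monomial of the expansion $f(a+d,c)-f(a,c)$ involving $d$ is $\ll d$, and use closure of $\{x:|x|\ll d\}$ under $\F$-linear combinations to conclude strict monotonicity. Your contraposition argument for the cancellation axioms is equivalent to the paper's direct contradiction argument, and your explicit verification that $I_d$ is a subspace (via the $2\alpha$ trick) spells out what the paper asserts in one sentence.
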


\begin{proof} For a non-negative $x$ and a positive $y$ in $A$, one writes $x\ll y$ (``much less'') if
$\alpha x < y$ for every $\alpha\in \F$.
It follows from this definition that if each of $x_1,\dots, x_k$ is much less than $y$, the
arbitrary linear combination $\sum_{i=1}^k \lambda_ix_i$ is less than $y$.

Let $a>b$ in $A$, i.e. $a = b+u$ for some $u>0$. Then 
\[
a\circ c = (b+u)\circ c = b\circ c +u  + h(b,c, u),\]
where every monomial $w$ of the polynomial $h$
involves $u$ and at least one more factor.
By induction and the definition of an order
on algebra, we have that $|w|\ll u$, where $|w|=
\max(w,-w)$. Hence we have $|h(b,c, u)|<u$ for
the linear combination monomials, and so
$a\circ c> b\circ c$, as desired. Similarly
one obtains $c\circ a>c\circ b$.

Conversely, with the preceding notation, the inequality $a\circ c> b\circ c$
implies $u>h(b,c,u)$. If $u>0$, then $a>b$, as required. Otherwise $u<0$ since the order on $A$
is linear, and $-u>0$. But in this case the
argument of the previous paragraph provides us
with inequality $-u >h(b,c,u)$, i.e. $u<h(b,c,u)$,
a contradiction. Similarly, we have $a>b$ if $c\circ a> c\circ b$.
\end{proof}

We support Kopytov's definition of ordered algebra
by the following statement, which is the converse of the above-mentioned Kopytov's theorem.

\begin{theorem} Let $(G, \le)$ be an arbitrary nilpotent, linearly ordered divisible
group  and $\cL(G)$ be the conversion of $G$ into the rational Lie algebra,
according to Theorem \ref{mal}. Then $(\cL(G),\le)$ is a linearly ordered
algebra (notice that the order remains the same).
\end{theorem}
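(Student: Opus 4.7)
The plan is to verify Kopytov's two requirements for $(\cL(G),\le)$ to be a linearly ordered Lie algebra: it is a linearly ordered $\QQ$-vector space, and $ab<a$ whenever $a>0$. By Theorem~\ref{mal} the set $\cL(G)$ equals $G$ with the same order, and the algebra operations are derived from the group operations and rational powers via the inverse BCH formulas
\[
x+y \;=\; xy\,[x,y]^{-1/2}\cdots, \qquad [x,y]_L \;=\; [x,y]_G\cdots,
\]
whose dots are finite products of rational powers of iterated group commutators of weight $\ge 2$ in $x,y$.

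The core input is a ``much-less'' relation on $G$ analogous to the one used in the previous theorem. Since $G$ is torsion-free nilpotent, each term $\gamma_i(G)$ of the lower central series is an isolated subgroup, and isolated subgroups of a linearly ordered group are convex. This produces a chain of convex normal subgroups
\[
\{1\}=\gamma_{c+1}(G)\subset\gamma_c(G)\subset\cdots\subset\gamma_1(G)=G.
\]
For $a>1$ let $i(a)$ be the index with $a\in\gamma_{i(a)}(G)\setminus\gamma_{i(a)+1}(G)$. Convexity of $\gamma_{i(a)+1}(G)$ together with $a\notin\gamma_{i(a)+1}(G)$ forces $z<a$ for every positive $z\in\gamma_{i(a)+1}(G)$; applying this to powers and to rational powers gives $z^q<a$ for all $q\in\QQ_{>0}$, and any finite $\QQ$-linear combination in $\cL(G)$ of such elements again lies in $\gamma_{i(a)+1}(G)$, hence has absolute value less than $a$. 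Write $z\ll a$ in this situation.

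Kopytov's second condition then follows immediately: the inverse BCH expression for $[a,b]_L$ is a product of iterated group commutators of weight $\ge 2$ each containing $a$, so $[a,b]_L\in\gamma_{i(a)+1}(G)$, giving $|[a,b]_L|\ll a$ and in particular $[a,b]_L<a$. Positivity of scalar multiplication is clear from $qa=a^q>1$ when $a>1$ and $q>0$. Monotonicity of addition reduces, by commutativity and associativity of $+$ in $\cL(G)$, to showing $a+d>a$ whenever $d>0$. By the inverse BCH formula one writes $a+d=d\cdot a\cdot K(d,a)$ in $G$ with $K(d,a)$ a product of iterated group commutators of $d,a$ each involving $d$ at least once; hence $K(d,a)\in\gamma_{i(d)+1}(G)$, and $h:=a\,K(d,a)\,a^{-1}\in\gamma_{i(d)+1}(G)$ by normality. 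Then $(a+d)a^{-1}=d\cdot h$ with $|h|\ll d$, which forces $(a+d)a^{-1}>1$ in $G$, i.e.\ $a+d>a$.

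The main obstacle is the combinatorial bookkeeping in the monotonicity-of-addition step: one must confirm that every non-trivial term in the inverse BCH expansion of $a+d$ contains $d$ as a bracket factor, placing $K(d,a)$ in $\gamma_{i(d)+1}(G)$. This is a structural property of the formulas of \cite{Laz,Gl,St,CdGV,KHU}, dual to the weight-tracking in the proof of the preceding theorem; once it is secured, the convex-subgroup chain transports the linear order from $G$ directly to $\cL(G)$.
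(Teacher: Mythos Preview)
Your argument rests on the claim that ``isolated subgroups of a linearly ordered group are convex'', but this is false: in $\QQ^2$ with the lexicographic order the isolated subgroup $\QQ\times\{0\}$ is not convex, since $0<(0,1)<(1,0)$. In particular the lower-central terms $\gamma_i(G)$ need not be convex for a bi-invariant linear order. On the free class-$2$ rational nilpotent Lie algebra $L$ with generators $x,y,z$, the chain
\[
0\subset\langle[y,z]\rangle\subset\langle[y,z],[x,z]\rangle\subset\langle[y,z],[x,z],z\rangle\subset\langle[y,z],[x,z],z,[x,y]\rangle\subset\cdots\subset L
\]
is a central series of ideals, so the associated lexicographic order satisfies Kopytov's axiom and (by the theorem quoted just before the one under discussion) yields a bi-invariant linear order on $G=\cG(L)$. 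In that order $|z|<|[x,y]|$, while $z\notin L^{2}=\gamma_2(G)$ and $[x,y]\in\gamma_2(G)$; hence $\gamma_2(G)$ is not convex, and your key implication ``$a>0$, $w\in\gamma_{i(a)+1}(G)$, $w>0$ $\Rightarrow$ $w<a$'' fails for $a=|z|$ and $w=|[x,y]|$. This is precisely the step on which your bounds $|[a,b]_L|<a$ and $|K(d,a)|\ll d$ depend.

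The paper avoids the lower central series entirely and works directly with the relation $x\ll y$ (meaning $|x^n|<y$ for all integers $n$), invoking Smirnov's result \cite{Sm} that $|g^{-1}y^{-1}gy|\ll y$ for every positive $y$ in any linearly ordered group. That estimate gives $|w|\ll a$ for any product $w$ of rational powers of group commutators each containing $a$ as a letter --- exactly what the inverse BCH formulas produce --- without any convexity assumption on the $\gamma_i$. If you replace your convexity-of-$\gamma_i$ mechanism by this direct archimedean estimate, your outline becomes essentially the paper's argument.
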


\begin{proof} We keep the notation $\circ$ for the product in $G$. For two  elements $x,y$ of $G$ the relation $x\ll y$ (``much less'') is defined if $|x^n|<y$ for every  integer $n$.
It follows from the definition that if  $x_1 \ll y$ and
$x_2\ll y$, then $x_1x_2\ll y$ and $x_1^q<y$ for any rational exponent $q$. Therefore $x\ll y$ implies $|qx|\ll y$ since for every rational $q$, the element $qx$ in $\cL(G)$ is just $x^q$ in $G$.

Since for every positive $y$ in a linearly
ordered group $G$, we have $|x^{-1}y^{-1}xy|\ll y $ for any $x\in G$ (\cite{Sm}), we have $|u|\ll y$ for arbitrary product $u$
of rational powers of commutators involving $y$.

To prove that $\le$ is a linear order on the vector space $\cL(G)$, one should show that for $a>b$ in $G$ we get
$a+c>b+c$, i.e. the element $(a+c)\circ(b+c)^{-1}$ is positive
in $G$.

At first, we get $a=b\circ y$ for a positive $y$. Then by \cite[Lemma 10.12, Claim (a)]{KHU}, $a=(b+y)\circ u$, where $u$
is a product as above, and so $|u|\ll y$.

Then  we obtain
$(b+y)\circ u=(b+y+u)\circ v$, where $|v|\ll|u|$. By BCH formula
(\ref{BCH}), the right-hand side is $b+y+u+v+w$, where
$w$ is a linear combination of Lie commutators involving $v$.
However if $|x_1|\ll y$ and $|x_2|\ll y$ in $G$, then
for the sum in $\cL(G)$, we have $|x_1+x_2|\ll y$ in $G$,
because $x_1+x_2 = x_1\circ x_2\circ u'$, where again $|u'|\ll|x_1|$.
Also, for a group commutator $[f,g]$, we have $[f,g]=(fg)\circ z$, where $z$ is a product of longer group commutators involving both $f$ and $g$ ( see \cite[Lemma 10.12, Claim (d)]{KHU}. Therefore the reverse induction on the commutator length shows that $|w|\ll|v|$.

So, to prove that $a=b+z$ for a positive in $G$ element $z$,
it suffices to show that for a positive $y$  and $|x|\ll y$ in $G$,  we have $y+x>0$ in $G$. This is true since again $y+x = y\circ x \circ v$, where $|v|\ll y$.

Now $a+c= (z+b+c)=z'\circ(z\circ(b+c))$, where again $|z'|\ll z$.
Therefore  $(a+c)\circ(b+c)^{-1}= z'\circ z>0$ , as required.

Arbitrary Lie product $ab$ is a product of rational powers
of group commutators involving both $a$ and $b$ (see \cite[Lemma 10.12, Claim (b)]{KHU}). Therefore, as above, $a\gg ab$ if $a$ is a positive element for the order $\le$. This completes the proof of the theorem.
\end{proof}

We see that the categories of nilpotent divisible, linearly ordered groups and nilpotent linearly ordered algebras are equivalent. Also, as shown in  \cite{KK}, an arbitrary linear order on a nilpotent group $G$ uniquely extends to a linear order on its divisible hull $\sqrt G$. So it is not surprising that the two theories are absolutely parallel.

\subsection{Refining partial orders on algebras}

Here we present some auxiliary stuff, needed for the next sections.

An ideal $I$ of a partially ordered algebra (po-algebra) $A$ is called
{\it convex} if for every $0\le b\le a$, where $a\in I$,
we have $b\in I$. A convex ideal $I$ defines the induced
partial order on the quotient $A/I$: by definition
$x+I\le y+I$ if there exist $a,b \in A$ and $a\in x+I, b\in y+I$,
such that $a\le b$. The relation $\le$ is well defined
 on the factor algebra $A/I$ over a convex ideal, and the canonical mapping $A\to A/I$ is a homomorphism of po-algebras
 with kernel $I$.

 We quote the following result from \cite{KS} (also see \cite{KVM}).

\begin{lemma}[Analogue of Levi's Theorem from Group Theory]\label{T2}
Let $A$ be an algebra over a partially ordered field $\F$ and $I$ an ideal in $A$ such that both $A/I$ and $I$ are partially ordered. If for any $a>0$, $a\in I$ and any $b\in A$ we have $ab,ba\le a$ then one can define a partial order on $A$ so that $I$ is a convex ideal and the partial orders on $A/I$ and $I$ are induced by the order on $A$.
\end{lemma}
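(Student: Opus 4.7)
The plan is to build a positive cone on $A$ by gluing the given cones on $I$ and $A/I$. Write $\pi:A\to A/I$ for the canonical projection and $P_I$, $P_{A/I}$ for the given positive cones, and set
\[
P := \{a \in I : a \geq_I 0\} \cup \{a \in A : \pi(a) >_{A/I} 0\},
\]
declaring $a \leq b$ iff $b - a \in P$. Convexity of $I$ and compatibility with the induced orders are then immediate from the construction: $P \cap I = P_I$ and $\pi(P) = P_{A/I}$; and if $0 \leq b \leq a$ with $a \in I$ then $\pi(b)$ is sandwiched between $0$ and $\pi(a) = 0$ in $A/I$, forcing $b \in I$.

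The first real verification is that $P$ is the positive cone of a vector-space partial order, i.e., $P + P \subseteq P$, $\F_{>0} \cdot P \subseteq P$, and $P \cap (-P) = \{0\}$. Each reduces to a case split on whether the summands (or, for the last assertion, the element) lie in $I$ or have strictly positive image in $A/I$: the ``$I$-case'' uses that $P_I$ is itself a cone, and the ``outside-$I$ case'' uses that the set of strictly positive elements of $A/I$ is closed under addition and positive scaling and misses $0$. The only slightly subtle sub-check is that two strictly positive elements of $A/I$ cannot sum to $0$, which follows from $P_{A/I} \cap (-P_{A/I}) = \{0\}$.

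The heart of the proof is the multiplicative axiom: for $a > 0$ in $A$ and arbitrary $b \in A$ one needs $a - ab, a - ba \in P$. If $a \in I$ with $a >_I 0$, then $ab, ba \in I$ (since $I$ is an ideal) and the interaction hypothesis of the lemma gives exactly $a - ab, a - ba \in P_I \subseteq P$. If instead $\pi(a) >_{A/I} 0$, the po-algebra property of $A/I$ yields $\pi(a)\pi(b) < \pi(a)$ strictly, hence $\pi(a - ab) >_{A/I} 0$ and $a - ab \in P$ by the second clause of the definition of $P$; the argument for $ba$ is symmetric.

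The main obstacle I expect is precisely this multiplicative step. One must be sure that the strict Kopytov property on $A/I$ is strong enough to exclude the ``boundary'' situation $\pi(a)\pi(b) = \pi(a)$, in which $a - ab$ would fall into $I$ and one would otherwise need to control its $I$-sign using data not supplied by the hypothesis. Under the strict po-algebra convention adopted earlier in the paper this case is vacuous, and this is the only place where strictness of the Kopytov condition is genuinely used; once it is handled, the construction of $P$ delivers all the stated conclusions.
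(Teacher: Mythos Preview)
The paper does not supply its own proof of this lemma; it is quoted from \cite{KS}. Your lexicographic cone construction is the standard one, and your verifications that $P$ is a vector-space positive cone, that $P\cap I=P_I$ and $\pi(P)=P_{A/I}$, and that $I$ is convex are all sound.

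There is one small gap. The po-algebra axiom adopted in this paper is \emph{strict}: for $a>0$ one must have $ab<a$, not merely $ab\le a$. In your second case ($\pi(a)>_{A/I}0$) you do obtain strictness, since the Kopytov condition on $A/I$ gives $\pi(a-ab)>0$ and hence $a-ab\in P\setminus\{0\}$; you even single this boundary issue out in your closing paragraph. But in your first case ($a\in I$, $a>_I 0$) the lemma's hypothesis supplies only $ab\le_I a$, and your line ``$a-ab\in P_I\subseteq P$'' is the non-strict conclusion. The fix is short: the interaction hypothesis is assumed for \emph{every} $b\in A$, so if $ab=a$ held for some $b$, then $a(2b)=2a$, while the hypothesis applied to $2b$ would give $2a\le_I a$, i.e.\ $a\le_I 0$, contradicting $a>_I 0$. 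Hence $ab<_I a$ strictly in this case too, and your argument goes through once you record this.
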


It follows that an arbitrary nilpotent algebra over the field $\F$
is linearly orderable. Indeed, to refer to Lemma \ref{T2}, one can choose an arbitrary linear order on the annihilator $I$ of $A$, which is a vector space with zero multiplication, and a linear order on the algebra $A/I$, where
such an order exists by induction on the nilpotency class.

The converse statement holds for finite-dimensional algebras.
\begin{prop} \label{linnil}If a finite-dimensional algebra $A$ over a linearly ordered field $\F$ is linearly ordered, then it is nilpotent.
\end{prop}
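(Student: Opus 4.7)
The plan is to use the order to build a filtration of $A$ whose depth is strictly increased by every multiplication, forcing $A$ to be nilpotent.

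I would begin by setting up archimedean classes. For positive $a, b \in A$ write $a \ll b$ if $\lambda a < b$ for every positive $\lambda \in \F$; this splits the nonzero elements of $A$ into totally ordered archimedean classes $C_1 > C_2 > \cdots$. Representatives of distinct classes are $\F$-linearly independent: if $\sum \lambda_i v_i = 0$ with $[v_1] > [v_2] > \cdots$ and $\lambda_j$ the first nonzero coefficient, then $v_j = -\sum_{i > j}(\lambda_i/\lambda_j)v_i$, and a short calculation (scaling each term by a common positive multiplier) shows that any finite $\F$-linear combination of elements $\ll v_j$ is itself $\ll v_j$, contradicting $|v_j| = v_j$. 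Hence the number of classes $\ell$ is at most $\dim A$, and setting $V_i = \{0\} \cup \{a \in A : [a] \le C_i\}$ produces a chain of subspaces $A = V_1 \supsetneq V_2 \supsetneq \cdots \supsetneq V_\ell \supsetneq V_{\ell+1} = 0$.

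Next I would show that multiplication always shifts the filtration by one step. Given $a \in V_i$ with $a > 0$, for any $b \in A$ and any $\lambda > 0$ in $\F$ the axiom applied to $\pm \lambda b$ yields $\pm \lambda(ab) = a(\pm \lambda b) < a$, hence $\lambda |ab| < a$; thus $ab \ll a$, and so $ab \in V_{i+1}$. The case $a < 0$ reduces by passing to $-a$, and $a = 0$ is trivial, giving $V_i \cdot A \subseteq V_{i+1}$; the symmetric axiom $ba < a$ yields $A \cdot V_i \subseteq V_{i+1}$.

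Then I would prove by induction on $k$ that $A^{2^k} \subseteq V_{k+1}$, which for $k = \ell$ gives $A^{2^\ell} = 0$. The base $A = V_1$ is trivial. For the step, any monomial of degree $2^{k+1}$ factors as $(u)(v)$ with $u \in A^i$, $v \in A^j$, $i + j = 2^{k+1}$, so $\max(i,j) \ge 2^k$. Using the descending chain $A \supseteq A^2 \supseteq \cdots$, we have $A^{\max(i,j)} \subseteq A^{2^k} \subseteq V_{k+1}$, whence $uv \in V_{k+1} \cdot A + A \cdot V_{k+1} \subseteq V_{k+2}$.

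I expect the only real subtlety to be the nonassociative bookkeeping: one only obtains $V_i \cdot V_j \subseteq V_{\max(i,j)+1}$ rather than $V_{i+j}$, so the naive estimate $A^n \subseteq V_n$ fails. The doubling trick bypasses this because at least one factor in every split of $2^{k+1}$ already has degree $\ge 2^k$, and the resulting crude bound $2^{\dim A}$ on the nilpotency class is acceptable since only qualitative nilpotency is claimed.
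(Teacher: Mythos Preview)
Your proof is correct. Both your argument and the paper's rest on the same key observation---that $a>0$ and $b\in A$ force $|ab|\ll a$ (and $|ba|\ll a$)---but the two proofs organise this differently. The paper fixes a positive $a$, forms the convex ideal $M_a=\{x:|x|\ll a\}$, chooses $a$ so that $\dim M_a$ is minimal among nonzero such ideals, checks that this $M_a$ annihilates $A$, and then inducts on $\dim A$ via the linearly ordered quotient $A/M_a$. You instead lay out the whole archimedean filtration $V_1\supsetneq\cdots\supsetneq V_{\ell+1}=0$ in one stroke, observe $V_iA+AV_i\subseteq V_{i+1}$, and finish with the doubling estimate $A^{2^k}\subseteq V_{k+1}$.

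What each approach buys: your route is self-contained and avoids passing to quotients (so you never need to invoke convexity of $M_a$ or the induced order on $A/M_a$); it also makes explicit the crude bound $A^{2^{\dim A}}=0$. The paper's route is shorter once one accepts the induction, and it isolates a structural fact (existence of a nonzero annihilating convex ideal) that is reused in the very next theorem. In fact your filtration refines the paper's: each $V_i$ is one of the ideals $M_a$, and the paper's minimal $M_a$ is your $V_\ell$. Your remark that $V_iV_j\subseteq V_{\max(i,j)+1}$ (rather than $V_{i+j}$) is exactly why the paper, too, only gets nilpotency and not a linear bound on the class from this argument.
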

\begin{proof} Following the paper \cite{KS}, we consider the set
$M_a$ for arbitrary positive $a\in A$. By definition, $M_a =\{x\in A\; |\; |x|\ll a\}$. Clearly, $M_a$ is a convex ideal,
which does not contain $a$.

One may assume that $ab\ne 0$ for some $a,b\in A$. Then $0\ne |ab|\ll a$, and so the ideal $M_a$ is not zero. Choosing such
an ideal with minimal nonzero dimension, we have $xy=0$ for every (positive) $x\in M_a$ and $y\in A$, because otherwise one would obtain
$0<\dim M_x<\dim M_a$ since $M_x\subset M_a$ and $x$ belongs to $M_a \backslash M_x$.

By induction on the dimension, we conclude that the linearly ordered quotient $A/M_a$ is nilpotent. Since the ideal $M_a$ annihilates $A$, the entire algebra $A$ is nilpotent.
\end{proof}

\begin{rk}\label{r10} A simple algebra $A$ with nontrivial multiplication admits only trivial ordering. Indeed, if $a>0$ in $A$, then $a\notin M_a$, so the ideal $M_a$ is $\{0\}$. For any $x\in A$, $xa,ax\in M_a$, so that $xa=ax=0$, and the annihilator of $A$ is a nontrivial ideal. As a result, $A$ has zero product, a
contradiction.
\end{rk}

\begin{theorem}\label{t1}
Let $A$ be a finite-dimensional algebra with nonzero product and nontrivial partial order $\le$. Then $A$ contains
a convex ideal $I$ such that $\{0\}\ne I\ne A$, and
the ideal $I$ ether annihilates $A$ or the restriction of $\le$
to $I$ is the trivial order.
\end{theorem}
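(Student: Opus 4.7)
The plan is to pick a minimal proper nonzero convex ideal $I$ of $A$ and show that it must either annihilate $A$ or already have trivial restricted order. This adapts the strategy of Proposition~\ref{linnil}, with a single minimality step replacing the induction on dimension used there.

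First I need to produce at least one proper nonzero convex ideal. Since $\leq$ is nontrivial, fix $a > 0$ in $A$. Kopytov's axiom gives, for every $b \in A$ and every scalar $\alpha \in \F$, $\alpha(ab) = a(\alpha b) < a$ and $-\alpha(ab) = a(-\alpha b) < a$, and likewise for $ba$; a simple scaling argument strengthens this to $\alpha(ab), -\alpha(ab) < \beta a$ for all $\alpha \in \F$ and all $\beta > 0$. This allows the construction of a convex ideal $M_a$ of $A$ with $a \notin M_a$ and $aA, Aa \subseteq M_a$, exactly as in Proposition~\ref{linnil}. If $aA$ or $Aa$ is nonzero then $M_a$ is a proper nonzero convex ideal. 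In the opposite subcase where $aA = Aa = \{0\}$ for every positive $a$, the annihilator $\mathrm{Ann}(A)$ contains the positive cone, so it is nonzero; it is proper since $A^2 \neq 0$; and it is convex because under this hypothesis any $b$ with $0 \leq b \leq c \in \mathrm{Ann}(A)$ is positive and hence automatically annihilating.

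By finite-dimensionality, fix a proper nonzero convex ideal $I$ of $A$ of minimal dimension. If the restriction of $\leq$ to $I$ is trivial, the theorem is proved. Otherwise choose $a \in I$ with $a > 0$. Then $M_a \cap I$ is a convex ideal of $A$ contained in $I$ and strictly smaller than $I$ (since $a \in I \setminus M_a$), so by minimality $M_a \cap I = \{0\}$. But $aA, Aa \subseteq M_a$ from the construction, and $aA, Aa \subseteq I$ because $I$ is an ideal containing $a$. Hence $aA = Aa = \{0\}$, i.e., the positive element $a$ annihilates $A$.

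To pass from positive elements to all of $I$, set $I_0 := I \cap \mathrm{Ann}(A)$. This is an ideal of $A$ contained in $I$, nonzero (it contains $a$), and convex: if $0 \leq b \leq c$ with $c \in I_0$, then $b \in I$ by convexity of $I$, and $b$ is a positive element of $I$, so the previous paragraph applied to $b$ gives $b \in \mathrm{Ann}(A)$ and hence $b \in I_0$. Minimality of $I$ then forces $I_0 = I$, so $I$ annihilates $A$. The main obstacle I expect is making the construction of $M_a$ rigorous in the partial-order setting: Proposition~\ref{linnil} uses $|x|$, which may not exist when $\leq$ is only partial, so one must either define $M_a$ directly from the scaled Kopytov inequality above or take $M_a$ to be the maximal convex ideal of $A$ not containing $a$ (via Zorn) and check that $aA + Aa$ is forced inside any such maximal ideal. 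Once this technicality is handled, the ideal-theoretic minimality argument runs cleanly.
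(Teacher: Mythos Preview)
Your argument is correct and follows the paper's own strategy: both proofs rest on the convex ideals $M_a$ for positive $a$ together with a dimension-minimality step. The paper minimizes $\dim M_a$ over non-annihilating positive $a$, and when the resulting $I=M_a$ has nontrivial induced order but does not annihilate $A$, it passes to the smaller convex annihilating ideal $I''$ spanned by the positive elements of $I$; you instead minimize once over \emph{all} proper nonzero convex ideals, so that final passage is absorbed into the minimality via $I\cap\mathrm{Ann}(A)=I$. This is a mild but genuine streamlining.

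On your flagged concern: the paper also invokes $M_a$ in the partial-order setting without redefining it, so the issue is shared. The direct definition $M_a=\{x:a-\alpha x>0\text{ for all }\alpha\in\F\}$ works. Convexity and the subspace property are routine; for the ideal property, apply Kopytov's axiom to each positive element $a-\alpha x$ to get $(a-\alpha x)-(a-\alpha x)(\beta y)>0$ for all $\alpha,\beta$, then add the two instances $(\alpha,\beta)=(t,1)$ and $(\alpha,\beta)=(-t,-1)$ to obtain $2a+2t\,xy>0$, hence $xy\in M_a$ for every $t$. So the direct construction suffices and the Zorn alternative is unnecessary.
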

\begin{proof}
Assume first that all positive elements $a\in A$
are annihilating, i.e. $ax=xa =0$ for every $x\in A$.
The annihilator $I$ of $A$ is a convex ideal since the positive elements all lie in $I$. It is nonzero since the order is nontrivial, and it is proper since the multiplication is not trivial.

Now suppose there is a non - annihilating positive element $a$. Being non - annihilating means that there is $x\in A$ such that one of $ax, xa\ne 0$. Since both $ax\ll a$ and $xa\ll a$,  $M_a$ is a nonzero ideal. Now $I=M_a$ is a convex ideal,
which does not contain $a$,
and so $I\ne A$. We may assume that the element $a$ is chosen
so that the ideal $I$ with these properties has minimal dimension.

Thus, $I$ is the desired ideal unless the restriction of the order $\le$ to $I$ is nontrivial and $I$ does not annihilate $A$. In this case, we have a positive element in $I$, and every
positive element $x\in I$ annihilates $A$, since otherwise
we would have $0\ne |xy|\ll x$ for some $y$ and $\{0\}\ne I' < M_a$ for the convex ideal $I'=M_x$ contrary to the choice of $a$.

Then we denote by $I''$ the linear envelope of all positive elements from $I$. We see that $I''$ is an annihilating ideal of $A$ and $\{0\}\ne I'' \le I<A$. It follows from the definition that $I''$ is convex in $I$, and so is in $A$, because, in turn,
$I$ is convex in $A$.

The theorem is proved.
\end{proof}

The following Kopytov's result was presented in \cite[Theorem 4.10]{KVM}.

\begin{lemma}\label{r11} Any partial order on a nilpotent Lie algebra $A$  over a linearly ordered field $\F$ can be extended to a linear order on $A$.
\end{lemma}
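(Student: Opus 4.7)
The strategy is to apply Zorn's lemma to the family of Kopytov-compatible partial orders on $A$ extending the given one, producing a maximal such extension, and then to use the nilpotency of $A$ to force this maximal extension to be linear.

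First I would translate into positive cones: a Kopytov partial order on $A$ is determined by its positive cone $P\subseteq A$, which is closed under addition and multiplication by positive scalars of $\F$, satisfies $P\cap(-P)=\{0\}$, and fulfills the commutator compatibility $a\in P\setminus\{0\},\,b\in A\Rightarrow a+[a,b],\,a-[a,b]\in P$. Unions of nested chains of such cones are themselves Kopytov cones, so Zorn's lemma provides a maximal Kopytov cone $P^*$ containing the cone $P_0$ of the given order.

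The remaining task is to show $P^*\cup(-P^*)=A$. Suppose to the contrary that some $c\ne 0$ satisfies $c,-c\notin P^*$, and using the finite lower central series $A=A^1\supseteq A^2\supseteq\cdots\supseteq A^n\supseteq A^{n+1}=\{0\}$, choose $c$ of maximal depth $k$, so that $c\in A^k$ and every element of $A^{k+1}$ is comparable with zero under $P^*$ (hence $P^*$ restricts to a linear order on $A^{k+1}$). Let $Q$ be the smallest subset of $A$ containing $P^*\cup\{c\}$ that is closed under addition, multiplication by positive scalars, and the Kopytov operations $q\mapsto q\pm[q,b]$ for all $b\in A$. Because $A$ is nilpotent, iterated brackets with $c$ land in deeper $A^{k+r}$ and vanish once $k+r>n$, so this closure terminates. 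A bookkeeping of the closure yields that every element of $Q$ has a normal form $\alpha c+p+\varepsilon$ with $\alpha\ge 0$ in $\F$, $p\in P^*$, and $\varepsilon$ an $\F$-linear combination of iterated brackets of $c$ with algebra elements; moreover the construction forces the coefficients in $\varepsilon$ to be bounded in absolute value by a constant multiple of $\alpha$, and in particular $\alpha=0$ implies $\varepsilon=0$.

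The main obstacle will be verifying $Q\cap(-Q)=\{0\}$, since the other cone axioms are automatic. If $x\in Q\cap(-Q)$, I would write $x$ and $-x$ in normal form and add to get $(\alpha+\alpha')c+(p+p')+(\varepsilon+\varepsilon')=0$. In the case $\alpha+\alpha'=0$, the normal-form bound forces $\varepsilon=\varepsilon'=0$, whereupon antisymmetry of $P^*$ gives $p=p'=0$, so $x=0$. The case $\alpha+\alpha'>0$ requires showing that the resulting expression $c=-(p+p')/(\alpha+\alpha')-(\varepsilon+\varepsilon')/(\alpha+\alpha')$ places $c$ or $-c$ in $P^*$, contradicting the choice of $c$; here one uses that $\varepsilon+\varepsilon'\in A^{k+1}$ is sign-determined by the linearity of $P^*$ on $A^{k+1}$, together with the Kopytov domination $|\varepsilon+\varepsilon'|\ll(\alpha+\alpha')c$ inherited from the closure bounds (as with the ``$\ll$'' relation of Subsection~\ref{ssPOA}), so that the two summands of $c$ must have compatible sign. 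With antisymmetry established, $Q$ is a legitimate Kopytov cone strictly larger than $P^*$ (it contains $c$), contradicting the maximality of $P^*$. Therefore $P^*\cup(-P^*)=A$, and the order induced by $P^*$ is the desired linear extension of $\le$.
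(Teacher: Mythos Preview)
The paper does not prove this lemma at all; it merely cites Kopytov \cite[Theorem 4.10]{KVM}. So there is no in-paper argument to compare against, and I will simply assess the correctness of your attempt.

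The Zorn/maximal-cone framework is fine, and the normal form $\alpha c + p + \varepsilon$ (with $\alpha\ge 0$, $p\in P^*$, and $\varepsilon$ an $\F$-combination of iterated brackets of $c$) is preserved under your closure operations. The implication ``$\alpha=0\Rightarrow\varepsilon=0$'' is also correct, since every occurrence of a bracket of $c$ in the construction is accompanied by a positive contribution to $\alpha$. The case $\alpha+\alpha'=0$ is therefore handled correctly.

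The case $\alpha+\alpha'>0$, however, does not go through as written, for two reasons.

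First, the asserted bound ``the coefficients in $\varepsilon$ are bounded in absolute value by a constant multiple of $\alpha$'' is false. A single Kopytov move $c\mapsto c+[c,Nb]=c+N[c,b]$ already gives $\alpha=1$ while the coefficient of $[c,b]$ in $\varepsilon$ is $N$, and $N\in\F$ is arbitrary. No uniform bound of this kind exists.

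Second, and decisively, the step ``$|\varepsilon+\varepsilon'|\ll(\alpha+\alpha')c$'' is circular. The relation $x\ll y$ of Subsection~\ref{ssPOA} is only meaningful when $y>0$ in the order at hand, and your standing hypothesis is precisely that $c$ is \emph{not} comparable with $0$ in $P^*$. Concretely, in the sub-case $\varepsilon+\varepsilon'<0$ you obtain
\[
(\alpha+\alpha')\,c \;=\; -(p+p') \;+\; \bigl(-(\varepsilon+\varepsilon')\bigr),
\]
with $-(p+p')\in -P^*\cup\{0\}$ and $-(\varepsilon+\varepsilon')\in P^*$. Since $P^*$ is only a partial order, two elements of $P^*$ need not be comparable, so nothing determines the sign of their difference; you cannot conclude that $c$ or $-c$ lies in $P^*$. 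The argument stalls here and cannot be rescued by the ``closure bounds'', which, as noted, do not exist.

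A workable route is to avoid the direct totality argument and instead induct on the nilpotency class via Lemma~\ref{T2}: extend the order on the central ideal $A^c$ (a vector space with zero multiplication) to a linear one, extend on $A/A^c$ by induction, and glue. The care is in ensuring that the original partial order on $A$ really is recovered by the glued lexicographic order; this needs a convexity argument, which is essentially what Kopytov carries out.
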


\subsection{Ranks of maximal partial orders.}

Here we focus on finitely-dimensional algebras and study their partial orders since by Proposition \ref{linnil}, only nilpotent algebras admit linear orders.

A partial order $\le'$ is called a {\it refinement} of a partial order $\le$ on a set if $x\le y$ implies $x\le' y$ for arbitrary
elements $x,y$. A partial order is called {\it maximal} if
it does not admit a proper refinement. Zorn's lemma implies that
for an arbitrary partial order on an algebra, there exists a maximal
refinement. Therefore every algebra admits maximal partial orders.

A partial  order $\le$ on an algebra $A$ is called {\it lexicographic} with respect to a finite series of convex ideals
\begin{equation}\label{ser}
0=I_0<I_1<\dots<I_m=A,
\end{equation}
if for every pair of elements $x\ne y$ and the maximal $k$
such that $x+I_k\ne y+I_k$, we have $x\le y$ or $x$ and $y$ are incomparable if and only if the same relation is true
for the cosets $x+I$ and $y+I$ with respect to the induced order
on the quotient $A/I_k$.

A quotient $I_{j+1}/I_{j}$ is called a {\it chief } factor for a series of ideal (\ref{ser}) (not necessarily convex) if it is a minimal (nonzero) ideal in $A/I_j$. The quotient $I_{j+1}/I_{j}$ is called a {\it central} factor here if
it annihilates the factor algebra $A/I_j$.

\begin{theorem} \label{factors} For arbitrary maximal partial order $\le$ on a finite-dimensional algebra $A$ over an linearly ordered field $\F$, there is a finite series of convex ideals (\ref{ser}) such that
every factor $I_k/I_{k-1}$ is either central and the induced order on it is linear, or a noncentral chief factor, and the induced
order on it is trivial.

Furthermore, the order $\le$ is lexicographic with respect to (\ref{ser}).
\end{theorem}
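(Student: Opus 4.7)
The plan is to proceed by induction on $\dim_{\F} A$: at each step I locate a convex ideal $I_1$ of the required type, pass to the quotient $A/I_1$ with its induced partial order, and then derive the lexicographic property as a separate final step from the maximality of $\le$.

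For the first ideal, if $\le$ is nontrivial I apply Theorem~\ref{t1} to obtain a proper nonzero convex ideal $I$ which either annihilates $A$ or carries the trivial induced order. In the annihilating (central) subcase, the induced order on $I$ must actually be linear: otherwise Kopytov's Lemma~\ref{r11} applied to $I$ (an abelian Lie algebra) yields a strict linear refinement $\le'$ on $I$, and gluing $\le'$ to the induced order on $A/I$ via Lemma~\ref{T2} (whose hypothesis holds trivially because $ab=ba=0$ on the annihilating ideal) produces a strict refinement of $\le$ on $A$, contradicting maximality; I set $I_1=I$. In the trivial-order subcase I take $I_1$ to be a minimal nonzero ideal of $A$ contained in $I$; its convexity in $A$ follows from the convexity of $I$ together with the triviality of $\le|_I$, and $I_1$ cannot be central, for then the preceding argument applied to $I_1$ would demand a linear order on a space whose induced order is trivial. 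So $I_1$ is a noncentral chief factor with trivial induced order. The case in which $\le$ on $A$ is itself trivial is similar and simpler: the same refinement argument rules out nonzero central ideals in $A$, so any minimal nonzero ideal of $A$ does the job.

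Next, the induced partial order $\bar\le$ on $A/I_1$ must itself be maximal, since any strict refinement $\bar\le'$ would lift, via Lemma~\ref{T2}, to a strict refinement of $\le$ on $A$; the hypothesis of Lemma~\ref{T2} is either vacuous (when the induced order on $I_1$ is trivial, so $I_1$ has no positive elements) or automatic (when $I_1$ is annihilating, so $ab=ba=0$). Maximality of $\le$ thus forces $\bar\le$ to be maximal, and the inductive hypothesis supplies a series of convex ideals $\bar I_0=0<\cdots<\bar I_{m-1}=A/I_1$ with the required factor structure. Pulling back via the standard correspondence between convex ideals of $A$ containing $I_1$ and convex ideals of $A/I_1$ produces the desired series $0=I_0<I_1<I_2<\cdots<I_m=A$.

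It remains to verify that $\le$ is lexicographic with respect to this series. I define $\le^{*}$ on $A$ by the lexicographic prescription: for $x\ne y$, let $j$ be the unique index with $y-x\in I_j\setminus I_{j-1}$, and declare $x<^{*}y$ precisely when the coset $(y-x)+I_{j-1}\in I_j/I_{j-1}$ is positive in the induced order on that factor. The plan is to check that $\le^{*}$ is itself a partial order on the algebra $A$ and that $\le\subseteq\le^{*}$, so that $\le=\le^{*}$ follows from the maximality of $\le$. Reflexivity, antisymmetry, and transitivity of $\le^{*}$ reduce to the corresponding properties of the linearly ordered central factors (noncentral chief factors carry the trivial order and contribute no comparisons); in transitivity the case of coincident leading indices uses strict positivity of a sum of two positives in a linearly ordered vector space. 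The multiplicative compatibility is the key check: for $x\in I_j\setminus I_{j-1}$ with $\bar x>0$ and any $b\in A$, centrality of $I_j/I_{j-1}$ in $A/I_{j-1}$ forces $xb,\,bx\in I_{j-1}$, so $\overline{x-xb}=\bar x>0$ in $I_j/I_{j-1}$, giving $xb<^{*}x$ and $bx<^{*}x$. For the inclusion $\le\subseteq\le^{*}$, if $y-x>0$ in $\le$ with $y-x\in I_j\setminus I_{j-1}$, then the coset has a positive representative and is nonzero, hence is positive in the induced order on $I_j/I_{j-1}$; this rules out the noncentral trivial-order case and, in the central linear case, yields $x<^{*}y$ at once. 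The main obstacle is precisely this last bundle of verifications for $\le^{*}$ (in particular the multiplicative compatibility), because one must juggle the convexity of each $I_k$ together with the bifurcated structure of the factors; once assembled, the maximality of $\le$ forces $\le=\le^{*}$ and gives the lexicographic property.
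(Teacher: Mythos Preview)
Your approach is essentially the same as the paper's: induction on $\dim A$, using Theorem~\ref{t1} to locate the first convex ideal $I_1$, the maximality of $\le$ together with Lemma~\ref{T2} to force the dichotomy (linear on a central factor versus trivial on a noncentral chief factor), and then passing to $A/I_1$. Your explicit verification of the lexicographic property via the auxiliary order $\le^{*}$ is in fact more detailed than the paper's treatment, which leaves that step largely implicit in the induction; your argument there is correct.

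There is one small omission. Theorem~\ref{t1} requires that $A$ have \emph{nonzero product}, so when $\le$ is nontrivial but the multiplication on $A$ is identically zero you cannot invoke it. The paper handles this case separately: if $A$ has trivial multiplication then $A$ is an abelian Lie algebra, Lemma~\ref{r11} refines $\le$ to a linear order, and maximality forces $\le$ itself to be linear, so the one-term series $\{0\}<A$ already works. You should insert this short case before appealing to Theorem~\ref{t1}. Apart from this, and the tacit (but standard) use of the lexicographic form of the Levi-type extension in Lemma~\ref{T2} to guarantee that the glued order genuinely refines $\le$, your argument is complete and parallels the paper's.
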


\begin{proof} We will proceed by induction on $d=\dim A$ with
obvious base $d=0$.

For $d>0$, we set $I_0=\{0\}$, assume first that the order $\le$ is trivial,
and choose a minimal ideal $I_1=I$. Then it is convex, the factor
$I/I_0$ is a chief factor in $A/I_0$, and $I_1/I_0$ is not
a central factor for the following reason. A central factor has trivial multiplication, and so it admits a linear order $\le_I$ by \cite{KVM}, 4.10. By Lemma \ref{T2}, $\le_I$ is a restriction of a nontrivial partial order on $A$, contrary
the assumption that the trivial order is maximal. Thus the factor
$I_1/I_0$ satisfies the requirements of the theorem.

Having trivial order, the ideal $I=I_1$ is convex, and the induced order $\le_{A/I}$ is maximal, because any proper refinement $\le'_{A/I}$ of it defines the proper refinement $\le'$ of $\le$ by the rule $x<'y$ if $x+I<'_{A/I} y+I$.
(This a refinement indeed, since $x<y$ implies $x-y\notin I$,
and so $x+I<y+I$.) Since $\dim A/I<d$, we get a required series
of convex ideals
\begin{equation}\label{AI}
I_1/I <I_2/I<\dots<A/I
\end{equation}
 in the quotient $A/I$, where the induced order on $A/I$ is lexicographic with respect to this series. Thus,
$I_0<I_1<I_2<\dots$ is a required series of convex ideals in $A$.

Let now the order $\le$ be nontrivial. If the multiplication
in $A$ is trivial, then $\le$ is a linear order by Lemma \ref{r11}, and the statement of the theorem holds for the series of ideals $\{0\}<A$.

In the remaining cases, we consider the convex ideal $I$ provided by Theorem \ref{t1}. If the order on $I$ is trivial, then we may
assume that $I$ is a minimal ideal since,  in this case, every ideal contained in $I$ is convex. The chief factor $I/\{0\}$ is not central. Indeed, otherwise there is a linear order $\le_I$ on $I$. Together with the induced order $\le_{A/I}$ it defines an order $\le'$ on $A$ by Lemma \ref{T2}, and $\le'$ is a proper refinement of $\le$, since it is nontrivial on $I$.

Therefore by induction, the series (\ref{AI}) allows us to obtain the required series (\ref{ser}), as above.

If the order on $I$ is not trivial, and so $I$ annihilates $A$,
the factors $I/\{0\}$ is central in $A$. Then again one could
extend the order on $I$ to a linear order and refer to Lemma \ref{T2} to obtain a partial order refining $\le$. Since the partial order is maximal, this refining is not proper, and so the restriction of $\le$ to $I$ is linear. The induced order on $A/I$ is maximal for the same reason. Then the induction on $d$ completes the proof as above.
 \end{proof}

 Given a maximal partial order $\le$ on a finite-dimensional algebra over a linearly ordered field, we call the sum of the dimensions
 of all central factors, provided by Theorem \ref{factors},
 the {\it rank} of the order $\le$.

 \begin{cy} \label{r} The rank of a maximal partial order on a
 finite-dimensional algebra $A$ over a linearly ordered field
 does not depend on the choice of the series (\ref{ser}) in  Theorem \ref{factors}. All maximal orders on $A$ have equal ranks.
 \end{cy}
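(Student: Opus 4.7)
I identify the rank of a maximal partial order with a Jordan--Hölder invariant of $A$ viewed as a left module over its multiplication algebra $M(A)$; this identification establishes both parts of the corollary simultaneously.

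Start from an arbitrary series (\ref{ser}) produced by Theorem~\ref{factors} for a given maximal order $\le$. I refine it to a composition series of $A$ as an $M(A)$-module as follows. Each non-central factor $I_k/I_{k-1}$ is by construction a chief factor in $A/I_{k-1}$, hence a simple $M(A)$-submodule; since it is non-central, the action of $M(A)$ on it is non-trivial. For a central factor $I_k/I_{k-1}$, $M(A)$ acts as zero by the very definition of centrality, so every $\F$-linear subspace $I_{k-1}\subset V\subset I_k$ is automatically an ideal of $A$ (indeed $AV\subset AI_k\subset I_{k-1}\subset V$, and similarly for right multiplication). Refining maximally inside each central factor breaks it into $\dim_\F(I_k/I_{k-1})$ one-dimensional steps, each of whose associated simple factor is the trivial $M(A)$-module $\F$. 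The result is a composition series of $A$ over $M(A)$ whose simple factors are exactly (i) the non-central chief factors of (\ref{ser}) (non-trivial simple modules) and (ii) copies of the trivial module $\F$, one for each unit of dimension in a central factor. In particular, the rank is precisely the number of trivial simple factors in this composition series.

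Since $\dim_\F A<\infty$, $A$ has finite length as an $M(A)$-module, so the Jordan--Hölder theorem guarantees that the multiset of isomorphism classes of simple factors of any composition series of $A$ over $M(A)$ is an invariant of the module $A$. In particular the number of occurrences of the trivial module $\F$ is such an invariant, and by the previous paragraph this number is the rank. Two different series (\ref{ser}) associated to the same maximal order refine, by the procedure above, to two composition series of $A$, and Jordan--Hölder forces the counts of trivial factors to agree; this gives Part~1. Two different maximal orders produce two different series (\ref{ser}), but the refinements are again composition series of the same $M(A)$-module $A$, so Jordan--Hölder again forces the rank to be the same; this gives Part~2.

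The one delicate point, and the only place where the particular form of (\ref{ser}) is really used, is the refinement step inside a central factor: triviality of the $M(A)$-action on $I_k/I_{k-1}$ is exactly what is needed to guarantee that arbitrary intermediate $\F$-subspaces are honest $A$-ideals, so that the refined chain is genuinely a composition series of $A$ over $M(A)$. Once this is observed, the entire corollary reduces to Jordan--Hölder, with the rank emerging as the intrinsic invariant ``multiplicity of the trivial simple module in $A$ as an $M(A)$-module'', which depends only on $A$.
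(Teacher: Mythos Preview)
Your proof is correct and follows essentially the same route as the paper: both refine the series (\ref{ser}) to a chief series of $A$ (the paper speaks of the ``operator version'' of Jordan--H\"older with left and right multiplications, which is exactly your $M(A)$-module formulation) and then read off the rank as a Jordan--H\"older invariant. The only cosmetic difference is bookkeeping---you count the trivial composition factors directly, while the paper computes $\dim A$ minus the total dimension of the noncentral chief factors---and these two counts are manifestly equal.
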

 \begin{proof} The series (\ref{ser}) has a refinement with the same noncentral factors, where
 every factor is a chief factor  (The ideal of the refinement are not necessarily convex.) By Jordan - H\" older Theorem, the sets
 of factors of two such series (with multiplicities) are
 equal. Furthermore, we can use the operator version of this theorem, where the operators are operators of left and right multiplications by the elements of $A$. Thus, the sets of noncentral chief factors are the same. Since the rank
 of $\le$ given by (\ref{ser}) is the difference of $\dim A$
 and the sum of the dimensions of all noncentral factors in (\ref{ser}), both statements of the corollary are proved.
 \end{proof}
 
 
 \begin{df}\label{porank}
 The common value of all ranks of maximal orders on a finite-dimensional algebra $A$ is called the \textit{partial order rank of $A$} or \textit{p.o. rank of $A$}. 
 \end{df}

Now the following is true.

 \begin{cy} If a finite-dimensional algebra $A$ over a linearly ordered field admits a linear order, then arbitrary partial
 order on $A$ extends to a linear order. Any partial order on a locally nilpotent algebra extends to a linear order.
 \end{cy}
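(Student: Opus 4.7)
My plan is to handle the two assertions separately but in parallel, with the second reducing to the first via Zorn's lemma plus a localization to finitely generated subalgebras.

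For the first assertion, I would begin by noting that Proposition \ref{linnil} forces $A$ to be nilpotent. Using Zorn's lemma, extend $\le$ to a maximal partial order $\le^{*}$. The hypothesized linear order on $A$ is itself a maximal partial order, and I claim its rank equals $\dim A$: the induced order on any quotient by a convex ideal is linear and nontrivial, so the Theorem \ref{factors} series for this linear order contains no noncentral chief factors, and every factor must be central. By Corollary \ref{r}, $\le^{*}$ has the same rank $\dim A$, so the Theorem \ref{factors} series for $\le^{*}$ also consists entirely of central factors, each carrying a linear induced order. Since $\le^{*}$ is lexicographic with respect to that series, and a lexicographic order built from linearly ordered factors is linear, $\le^{*}$ refines $\le$ to a linear order.

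For the second assertion, let $A$ be locally nilpotent, $\le$ a partial order on $A$, and fix by Zorn a maximal partial order $\le^{*}$ extending $\le$. Suppose for contradiction that $a,b \in A$ are incomparable under $\le^{*}$. By maximality, each of the two candidate refinements obtained by adjoining $a \le b$ or $b \le a$ to the positive cone of $\le^{*}$ and closing under the po-algebra axioms must be inconsistent, i.e.\ must force $x$ and $-x$ into the closure for some $x \ne 0$. Each such closure derivation is finitary, since each invocation of the multiplicative axioms $p \mapsto p - py$ and $p \mapsto p - yp$ uses only a single element $y \in A$. Let $B$ be the subalgebra of $A$ generated by $a$, $b$, and all auxiliary elements appearing in these two derivations; then $B$ is finitely generated, hence finite-dimensional nilpotent by local nilpotency, and both inconsistencies persist on $B$ since all the elements in the derivations already lie in $B$. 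However, $B$ admits a linear order by the remark after Lemma \ref{T2}, so by the first assertion $\le^{*}|_{B}$ extends to some linear order on $B$; in that extension either $a < b$ or $b < a$, meaning one of the two adjunctions was in fact consistent on $B$, a contradiction. Hence $\le^{*}$ must have been linear.

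The principal obstacle is the finitary localization in the second assertion, namely confirming that the closure of the positive cone under the po-algebra axioms is finitary in spite of the multiplicative axioms quantifying over all $y \in A$, and that an inconsistency produced by this closure, when localized to a subalgebra $B$ containing the finitely many $y$'s involved, is genuinely an inconsistency inside $B$. Once this finitary character is in hand, the reduction to finitely generated, hence finite-dimensional nilpotent, subalgebras places us squarely inside the first assertion's regime and closes the argument.
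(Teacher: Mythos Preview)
Your first assertion is essentially the paper's own argument, spelled out in more detail: both rely on Corollary~\ref{r} (all maximal orders have equal rank) together with the fact that a linear order has rank $\dim A$, forcing any maximal order to be linear via the lexicographic description in Theorem~\ref{factors}.

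For the second assertion your route differs in presentation from the paper's. The paper reduces to the finite-dimensional case by observing that ``every partial order extends to a linear order'' is a \emph{local property} and invoking Malcev's Local Theorem, citing the analogous group-theoretic argument as transferring verbatim. Your argument is a direct, hands-on compactness proof: take a maximal refinement $\le^{*}$, note that incomparability of $a,b$ forces both adjunctions $a<b$ and $b<a$ to produce finitary inconsistencies, and then localize those derivations to a finitely generated (hence finite-dimensional nilpotent) subalgebra $B$, where the first assertion yields a contradiction. This is really the same idea unwound --- Malcev's Local Theorem is precisely the abstract compactness principle your derivation-localization implements --- but your version is more self-contained and makes the finitary bookkeeping explicit. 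The one point you correctly flag as needing care, that each element of the po-algebra closure of the positive cone depends on only finitely many auxiliary multipliers $y$, is straightforward once stated (the closure is a directed union over finite sets of generators and multipliers), and it is all that is needed to carry the inconsistency down to $B$. So your proposal is correct; it simply trades the paper's citation of a general local principle for an explicit compactness argument.
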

 
 The last claim generalizes Lemma \ref{r11}, proven for nilpotent Lie algebras.
 
 \begin{proof}
 The first claim and the second in the case of finite-dimensional algebras follow from the second claim of Corollary \ref{r}. Now if $A$ is locally nilpotent, then it has a local system of finitely generated nilpotent, hence finite-dimensional subalgebras, that is, such a system of subalgebras whose union is the whole algebra while any two terms of the system are contained in a third one.
 
 Now the possibility of extending any partial order to a linear order is a local property: if an algebra $A$ has a local system of subalgebras with this property, then $A$ has this property.  The proof that this is indeed a local property had been proven in the case of groups (see, for example, \cite[Addendum, $\S 2$, Theorem 2]{KM}). Since the proof only uses the general properties of the order relation and Malcev's Local Theorem, it transfers without changes to the case of algebras. 
 \end{proof}

 One more characterization of order rank is given by
 \begin{theorem} \label{rd} The rank of a maximal partial order $\le$ in a
 finite-dimensional algebra over a linearly ordered field is
 equal to the dimension of every subspace $V$ maximal with respect the property that the restriction of $\le$ to $V$ is a
 linear order.
 \end{theorem}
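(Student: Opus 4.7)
Fix the convex series $0=I_0<I_1<\cdots<I_m=A$ given by Theorem \ref{factors}, so each factor $I_{k+1}/I_k$ is either central with linear induced order or a noncentral chief factor with trivial induced order, and $\le$ is lexicographic with respect to this series. By Corollary \ref{r}, the rank $r$ equals the sum of $\dim(I_{k+1}/I_k)$ over the central indices $k$. The plan is to show $\dim V\le r$ for any linearly ordered subspace, to exhibit one of dimension exactly $r$, and to show that no linearly ordered $V$ with $\dim V<r$ can be maximal.

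For the upper bound, let $V$ be any subspace on which $\le$ restricts to a linear order, and set $V_k=V\cap I_k$, so $\dim V=\sum_k \dim(V_{k+1}/V_k)$. I claim $V_{k+1}=V_k$ whenever $I_{k+1}/I_k$ is noncentral: otherwise pick $v\in V_{k+1}\setminus V_k$; then $v\in V$ must be comparable to $0$, yet by the lexicographic rule this comparison is governed by $v+I_k$ inside the trivially ordered quotient $I_{k+1}/I_k$, which is a contradiction. Hence only central factors contribute and $\dim V\le r$. To realise this bound, for each central $k$ pick a basis of $I_{k+1}/I_k$ and lift it into $I_{k+1}$; projecting any nontrivial relation among these lifts onto its topmost layer shows they are independent, so their span $W$ has dimension $r$, and for distinct $x,y\in W$ the largest $k$ with $x-y\notin I_k$ is central by construction, making $x,y$ comparable via the lexicographic rule applied to the linearly ordered quotient $I_{k+1}/I_k$.

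Finally, if $V$ is linearly ordered with $\dim V<r$, some central index $k$ satisfies $V_{k+1}/V_k\subsetneq I_{k+1}/I_k$; choose $w\in I_{k+1}$ whose image $\bar w$ in $I_{k+1}/I_k$ lies outside $V_{k+1}/V_k$. I would check that $V':=V+\F w$ remains linearly ordered by examining a nonzero $v'=v+cw$ with $v\in V$, $c\in\F$: if $v\notin I_{k+1}$ then $v'\notin I_{k+1}$ too and the top layer of $v'$ equals that of $v$ (since $cw\in I_{k+1}$), necessarily central because $v\in V$ is comparable to $0$; if $v\in I_{k+1}$ and $c=0$ then $v'=v\in V$; if $v\in I_{k+1}$ and $c\ne 0$, the choice of $\bar w$ forces $(v+cw)+I_k\ne 0$ in the linearly ordered central factor $I_{k+1}/I_k$; in every case $v'$ is comparable to $0$, giving the extension. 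Combined with the upper bound this proves every maximal linearly ordered $V$ has dimension exactly $r$. The main obstacle is this last verification, where one must track the lexicographic behaviour simultaneously in layer $k$ and in all higher layers; the requirements $w\in I_{k+1}$ (so no higher layer is disturbed) and $\bar w\notin V_{k+1}/V_k$ (so no cancellation in layer $k$ can occur) are precisely what prevents incomparable pairs from appearing in $V'$.
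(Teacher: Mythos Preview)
Your proof is correct and follows essentially the same approach as the paper: both establish the upper bound by showing that a linearly ordered $V$ cannot meet a noncentral layer nontrivially, and both prove the lower bound by extending a deficient $V$ with a vector from an underrepresented central factor and checking case by case that comparability survives. Your argument is in fact slightly more careful than the paper's at one point: the paper writes ``choose a vector $u\in I_j\setminus V$'' where what is actually needed (and what you state explicitly) is that the image of $u$ in $I_j/I_{j-1}$ lies outside the image of $V\cap I_j$; without this the paper's claim that $x\in U\cap I_{j-1}$ implies $x\in V$ would not follow. Your separate exhibition of an explicit $W$ of dimension $r$ is redundant given the extension step, but harmless.
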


 \begin{proof} Let $V$ be as in the formulation of the lemma.
 On one hand, the subspace $V$ cannot contain a vector $a\in I_j\backslash I_{j-1}$
 if a quotient $I_j/I_{j-1}$ in the series (\ref{ser}) given in Theorem \ref{factors}, has trivial induced order. Indeed, the cosets $a+I_{j-1}$ and $I_{j-1}$ are incomparable, and so
 $a$ and $0$ are, because the order $\le$ is lexicographical with
 respect to the series (\ref{ser}). This contradicts the assumption  that the
 order $\le$ is linear on $V$. Then an induction on $k$
 shows that $\dim(V\cap I_k)$ is at most the sum of the
 dimensions of the central factors of $A$ in the subseries $\{0\}< I_1<\dots<I_k$. For $k=m$, we have that $\dim V$ does not
 exceed the rank of $\le$.

 Now we want to prove that $V$ must contain $n_j$ vectors of $I_j$ linearly independent modulo $I_{j-1}$, where $n_j=\dim(I_j/I_{j-1})$ and the quotient $I_j/I_{j-1}$ is a central factor of $A$.
 Arguing by contradiction, we choose a vector $u\in I_j\backslash V$ and consider the larger subspace $U=V+\langle u\rangle$, where
 $\langle u\rangle$ is the span of $u$.

 If $x\in U\backslash I_j$, then $0\ne x=y\; (\!\!\!\!\mod \;I_j)$,
for some $y\in V$. Since $y$ is comparable with $0$ in $A$, so is $x$,
because  the order $\le$ is lexicographic. If $x\in I_j\backslash I_{j-1}$ then $x$ is also comparable with $0$, for the same reason,
because the order on the factor $I_j/I_{j-1}$ is linear. If
$x\in U\cap I_{j-1}$, then we have $x\in V$, and so $x$ is again comparable with $0$, since the order $\le$ is linear on $V$.

Thus, the order $\le$ is linear on the larger subspace $U$, a
contradiction.  Therefore $\dim V$ is at least the sum of
 the dimensions of the central factors in (\ref{ser}), and the theorem is proved.
 \end{proof}

 \subsection{Maximal partial orders in solvable Lie algebras}

It turns out that the rank of maximal partial orders on finite-dimensional solvable Lie
algebras $A$  is equal to
the  (common) dimension of the Cartan subalgebras of $A$. Recall that in a finite-dimensional Lie algebra $A$ there is a nilpotent
subalgebra $H$ equal to the normalizer of it in $A$. It is called a \textit{Cartan subalgebra} of  $A$. In characteristic $0$, all Cartan subalgebras of $A$ have the same dimension called
the \textit{rank} of $A$ (\cite[Chapter VII, \S 3]{Bou}).

\begin{theorem} If $A$ is a finite-dimensional solvable Lie
algebra over a linearly ordered field, then the partial order rank on $A$ is equal to
the rank of $A$.
\end{theorem}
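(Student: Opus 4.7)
The plan is to recast the p.o.\ rank as a Jordan--Hölder invariant of the adjoint representation of $A$ on itself, to reduce to the algebraically closed case by base change, and to evaluate the invariant there via Lie's theorem and the Cartan weight decomposition.

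First I will invoke Corollary \ref{r}: the p.o.\ rank equals the sum of the dimensions of the \emph{central} chief factors in any composition series of $A$ viewed as an $\mathrm{ad}(A)$-module, where ``central'' means annihilated by $\mathrm{ad}(A)$, i.e., a trivial irreducible $A$-submodule (ideals of $A$ coincide with $\mathrm{ad}(A)$-submodules, so chief series of $A$ are exactly composition series of the adjoint module). Denote this invariant by $t(A)$; by the operator form of Jordan--Hölder it is well defined, and the theorem reduces to showing $t(A)=\dim H$ for a Cartan subalgebra $H$.

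Next I would pass to the algebraic closure $\bar\F$ of $\F$, which has characteristic zero since $\F$ is linearly ordered. Set $\bar A=A\otimes_\F\bar\F$ and $\bar H=H\otimes_\F\bar\F$; then $\bar H$ is a Cartan of $\bar A$ and $\dim\bar H=\dim H$. A central chief factor of $A$ extends to a direct sum of trivial $\bar A$-modules of the same total dimension. Conversely, if $M$ is a nontrivial irreducible $A$-submodule and some simple $\bar A$-summand of $M\otimes\bar\F$ were trivial, then by Galois-invariance of the zero weight all Galois conjugates of that summand would be trivial, forcing $\bar A$ to act trivially on $M\otimes\bar\F$ and hence $A$ on $M$---a contradiction. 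Therefore $t(A)=t(\bar A)$, so I may assume $\F$ to be algebraically closed.

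In that setting Lie's theorem makes every composition factor of $\bar A$ under the adjoint action one-dimensional, with a weight $\lambda\in\bar A^{*}$ that annihilates $[\bar A,\bar A]$. Decomposing $\bar A$ as an $\bar H$-module gives $\bar A=\bigoplus_{\alpha}\bar A_{\alpha}$ with $\bar A_{0}=\bar H$; for $\alpha\ne 0$, any $h\in\bar H$ with $\alpha(h)\ne 0$ makes $\mathrm{ad}(h)$ invertible on $\bar A_{\alpha}$, whence $\bar A_{\alpha}\subseteq[\bar A,\bar A]$. Thus $\bar A=\bar H+[\bar A,\bar A]$, and an adjoint weight $\lambda$ is determined by $\lambda|_{\bar H}$; in particular $\lambda=0$ iff $\lambda|_{\bar H}=0$. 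Every $\bar A$-composition series is a chain of $\bar H$-submodules with one-dimensional factors, so it is also an $\bar H$-composition series, and Jordan--Hölder for $\bar H$-modules identifies the multiplicity of the trivial weight with $\dim\bar A_{0}=\dim\bar H$. Hence $t(\bar A)=\dim H$, completing the argument.

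The most delicate step is the base-change reduction: one must exclude the possibility that a nontrivial $\F$-irreducible acquires a trivial $\bar\F$-composition factor after extension. Galois-invariance of the zero weight closes this gap, and it is the only place where the nature of the ground field enters in a substantive way; the remainder is standard Lie-theoretic bookkeeping.
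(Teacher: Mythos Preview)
Your argument is correct and reaches the same conclusion as the paper, but by a genuinely different route.

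The paper works entirely over the ground field $\F$. It first observes that $[A,A]$ acts trivially on every chief factor (so the action factors through the abelian quotient $A/[A,A]$), whence each $\ad a$ is either zero or invertible on every chief factor. Then, using only that $\F$ is infinite, it chooses a single element $a\in A$ lying outside the finitely many proper ``kernel'' subspaces associated to the noncentral chief factors. For this $a$, $\ad a$ is invertible on every noncentral factor and zero on every central one, so the multiplicity of $0$ in the characteristic polynomial of $\ad a$ is exactly the p.o.\ rank $r$. Finally it invokes the characterization of Cartan subalgebras as Fitting null-components of regular elements to conclude $\dim H=r$.

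Your approach instead base-changes to $\bar\F$, uses Lie's theorem to make all composition factors one-dimensional, and then reads off the number of trivial $\bar A$-factors from the Cartan weight decomposition via the identity $\bar A=\bar H+[\bar A,\bar A]$. This is more structural and makes the connection to the rank transparent once one is over an algebraically closed field; the price is the descent step. Your Galois argument is essentially right but phrased loosely: the clean statement is that the sum of all trivial $\bar A$-submodules of $M\otimes\bar\F$ is $\mathrm{Gal}(\bar\F/\F)$-stable (because the zero weight is Galois-fixed), hence descends to an $A$-submodule of $M$, which by simplicity of $M$ forces $M$ to be trivial. Equivalently, the image of $U(A)$ in $\End_\F(M)$ is simple, so $M\otimes\bar\F$ is semisimple with simple summands forming a single Galois orbit; either formulation closes the gap you flagged.

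In short: the paper's proof is more elementary (no base change, no descent, just an avoidance-of-hyperplanes argument over an infinite field and the regular-element description of Cartan subalgebras), while yours is more conceptual and would generalize more readily to settings where one already has weight-space machinery available.
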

\begin{proof} In characteristic $0$, a finite dimensional Lie algebra $A$ is solvable if and only if its derived subalgebra $(A,A)$ is nilpotent (\cite[Chapter I, \S 3]{Bou}.
So the adjoint action of $(A,A)$ on the factors of a chief series
going through $(A,A)$ is trivial. Hence by Jordan - H\" older Theorem for algebras with operators, the derived subalgebra
acts trivially on an arbitrary chief factor $M$ of $A$, i.e. the
adjoint action of the algebra $A$ on $M$ is,
in fact, the action of the abelianizer $A/(A,A)$. Every chief factor of a solvable Lie algebra is an abelian algebra. It follows
that the kernel of the action of a particular adjoint operator $\ad\; a$ on the simple $A/(A,A)$-module $M$ is invariant with respect
to the adjoint action of $A$, and so the induced action of
$\ad\; a$ on $M$ is either nonsingular or trivial.


By Corollary \ref{r}, we may now consider arbitrary
maximal partial order $\le$ on $A$ and arbitrary series (\ref{ser}) given in Theorem \ref{factors}. We also consider
a refinement ${\mathcal{J}}:J_0<J_1<\dots $ of (\ref{ser}) with the same noncentral factors and one-dimensional central factors in the
chief series $\mathcal{J}$. So by Theorem \ref{factors}, the rank $r$ of the order $\le$ is just the
number of central factors in $\mathcal{J}$.

Since the adjoint action $\ad\,a$ of every element $a\in A$ is trivial in every central factor of $\mathcal{J}$,
the dimension of the null-component of the adjoint linear operator $\ad\, a: x\mapsto ax$ on $A$ is at least  $r$. (Recall that
the null-component consists of all vectors annihilated by some
power of the operator $\ad\, a$.)

For noncentral factors $J_k/J_{k+1}$ of $\mathcal{J}$, the action of $A$ is nontrivial, and so the kernel $K_j$ is a proper subspace
of it. The union of all (although of finitely many) $K_j$-s is a proper subset of $A$ too, because the ground field is infinite. Therefore
there is an element $a\in A$ acting nontrivially, and therefore
 in a nonsingular way, on every noncentral factor of $\mathcal{J}$.
 Hence the multiplicity of the root $0$ in the characteristic polynomial of $\ad\, a$ is exactly $r$. Hence the dimension of the
 null-component of the linear operator $\ad\, a$ on $A$ is minimal.

 An element $a$ with minimal dimension of the null-component
 for the action of $\ad\, a$ on $A$, is called regular. The
 null-component of $\ad\;a$ for a regular element $a$ is a Cartan subalgebra $H$ of a finite-dimensional Lie algebra $A$ (\cite{Bou}, VII.3). Thus, $\dim H =r$, as required.
 \end{proof}

 \begin{rk} Since $\dim H$ is the sum of dimensions of central factors
 in (\ref{ser}) and the regular element $a\in H$ provides a nonsingular action of $\ad\, a$ on  noncentral factors of the series (\ref{ser}), we see that the factors $(H\cap I_j)/(H\cap J_{j+1})$ are isomorphic with the  central factors $I_j/I_{j+1}$, as ordered spaces. Therefore the restriction
 of $\le$ to $H$ is a linear order,  and
 a linear order on $H$ uniquely defines the order on the
 central factors of (\ref{ser}), and so on the entire Lie algebra $A$ by Theorem \ref{factors} .(Also by Theorem \ref{rd}, the Cartan subalgebra $H$ is a maximal subspace of $A$, where the restriction of the maximal partial order $\le$ is linear.)

 However not every linear order on a Cartan subalgebra extends
 to a maximal partial order of the whole finite-dimensional
 solvable Lie algebra $A$ over a linearly ordered field. For an example, we take the following $4$-dimensional Lie algebra $A$.
This is the semidirect product of a $3$-dimensional nilpotent algebra $L$
 with basis $\{a, b, ab\}$ by a 1-dimensional subalgebra $\langle c\rangle$ such the the inner derivation $\ad\, c$ maps $a$ to $a$, $b$ to $- b$, and $ab$ to $0$. The subspace $H=\langle c, ab\rangle$ is a  $2$-dimensional Cartan subalgebra of $A$.

 It is easy to see that there are only two nonzero central
 factors  $A/L$  and\\  $\langle ab\rangle/\{0\}$ in $A$, and the latter
 factor must precede the former one in a series (\ref{ser}).
 So by Theorem \ref{factors}, any maximal (lexicographic)
 partial order on $A$ induces a lexicographic order on $H$
 with respect to the series $\{0\}\subset \langle ab\rangle\subset H$.  However $H$ admits
 many other linear orders for example any lexicographical order with respect to the series $\{0\}\subset \langle c\rangle\subset H$.
 \end{rk}

\end{document}